\newtheorem{theorem}{Theorem}[section]
\newtheorem{lemma}{Lemma}[section]
\newtheorem{cor}{Corollary}[section]
\newtheorem{remark}{Remark}[section]
\DeclareMathOperator{\vol}{vol}
\DeclareMathOperator{\iso}{iso}
\DeclareMathOperator{\avg}{avg}
\title{Structural and extremal properties of $l_1$-Fiedler value }
\author{
M. Rajesh Kannan \thanks{ Department of Mathematics, Indian Institute of Technology Hyderabad, Kandi, Sangareddy 502284, India. Email: rajeshkannan@math.iith.ac.in, rajeshkannan1.m@gmail.com} \and 
Rahul Roy 
\thanks{ Department of Mathematics, Indian Institute of Technology Hyderabad, Kandi, Sangareddy 502284, 502284, India. Email: ma23resch11004@iith.ac.in } 
}
\date{\today}
\begin{document}
\maketitle

\begin{abstract}
The algebraic connectivity $a(G)$, defined as the second smallest eigenvalue of the Laplacian matrix $L(G)$, admits a well-known variational characterization involving the minimization of a quadratic form subject to an $\ell_{2}$-norm constraint. In a recent work, Andrade and Dahl (2024) proposed an analogous formulation based on the $\ell_{1}$-norm, leading to the introduction of a new graph parameter $b(G)$, referred to as the $l_1$-Fiedler value. In this article, we undertake a detailed investigation of the structural and extremal properties of $b(G)$. We first derive a Nordhaus--Gaddum type inequality for $b(G)$. For trees, we determine both global maximizer and minimizers of $b(G)$, and present extremal constructions for trees with prescribed diameter, maximum degree, and number of pendant vertices. We further establish a connection between $b(G)$ and Laplacian matrices, and obtain a bound for $b(G)$ in terms of the edge connectivity, along with a complete characterization of the graphs attaining equality. We derive an explicit formula that describes the behaviour of $b(G)$ under the addition of pendant vertices. We also investigate the connection between $b(G)$ and the isoperimetric number.

\end{abstract}
\section{Introduction}

The eigenvalues and eigenvectors of Laplacian matrices of graphs play a fundamental role in spectral graph theory and have found wide-ranging applications in areas such as combinatorial optimization, electrical networks, graph signal processing, data science, and machine learning. In particular, the second smallest eigenvalue of the Laplacian matrix serves as a robust measure of the connectivity of a graph \cite{alg-con}. It also provides useful lower or  upper bounds for several important graph invariants, including the isoperimetric number, maximum cut, independence number, genus, and diameter \cite{mohar-lap-sur-1, mohar-eigen-comb-opti, trevisan_spec-partition}. The associated eigenvector plays a central role in graph partitioning and data clustering, most notably in spectral clustering methods \cite{trevisan_spec-partition, ulrike-spectral}. In his seminal work \cite{alg-con}, Fiedler introduced the term algebraic connectivity to describe the second smallest Laplacian eigenvalue.

The algebraic connectivity of a graph $G$, denoted by $a(G)$,  admits a well-known variational characterization involving the $l_2$-norm in the objective function and the constraints. More recently, in \cite{enide-geir-2024}, Andrade and Dahl investigated analogous optimization formulations based on the $l_1$-norm and the $l_\infty$-norm. The resulting parameters are denoted by $b(G)$ and $\gamma (G)$, respectively. They established a formula for $b(G)$  in terms of the sparsest cut of $G$, which implies that computing $b(G)$ is an NP-hard problem. In contrast, they showed that $\gamma (G)$ can be computed via a linear programming formulation. 

 Let \(G = (V(G), E(G))\) be an unweighted, undirected, and simple graph. The set \(V(G)\) contains the vertices, and \(E(G)\) contains the edges of \(G\). The number of vertices, written as \(|V(G)|\), is \(n\). If two vertices \(u\) and \(v\) are adjacent, we write \(u \sim v\).  The \textit{adjacency matrix} of \(G\), denoted by \(A(G)\), is the symmetric \(n \times n\) matrix whose \((u,v)\)-entry is defined as
\[
a_{uv} = 
\begin{cases}
1, & \text{if } u \sim v,\\[4pt]
0, & \text{otherwise}.
\end{cases}
\]
 For a graph $G$, let $\Delta(G)$ denote the \textit{diagonal matrix} with the degrees of the vertices of $G$ as its diagonal elements.  The \textit{Laplacian matrix} $L(G)$ is the $n\times n$  matrix defined as 
$$
L(G)=\Delta(G)-A(G),
$$
The eigenvalues of the adjacency matrix help in understanding several structural properties of a graph, such as whether it is bipartite or regular. They also give bounds for important graph parameters, including the chromatic number, clique number, and independence number \cite{ipm-brouwer-haemers}.
The eigenvalues of the Laplacian matrix of a graph reveal information about the connectedness of the graph $G$.  It is well known that $G$ is connected if and only if $a(G) > 0$ \cite{alg-con, fiedler-1989}.
An eigenvector corresponding to $a(G)$ is called a \textit{Fiedler vector} \cite{enide-geir-2024}. The value \(a(G)\) can be computed using a standard optimization formula based on the Courant–Fischer theorem \cite{matrix-analysis}. This formula is:
 \begin{equation*}
a(G)=\min  \biggl\{\sum_{uv \in E(G)}(x_u - x_v)^2: \sum_{v \in V(G)} x_v =0 ~\mbox{and}~\Vert x \Vert _{2}=1\ \biggl\},
 \end{equation*} 
 where $\Vert x \Vert_2 := \sqrt{\sum\limits_{i=1}^n \vert x_i \vert^2}$ is the $l_2$-norm of the vector $x\in \mathbb{R}^n$. 
The vector \(x\) that gives the minimum in this expression has an important property: the squared differences \((x_u - x_v)^2\) along the edges are as small as possible. This means that the values of \(x\) change in an optimal and smooth way across the edges of the graph. Andrade and Dahl \cite{enide-geir-2024} referred to this as the \( \ell_2 \)-smoothing problem. They also studied similar optimization problems using the \( \ell_1 \)-norm and the \( \ell_\infty \)-norm \cite{enide-geir-2024}. This led to the definition of two new graph parameters:

$$
b(G)=\min\biggl\{\sum_{uv \in E(G)}\vert x_u - x_v\vert: \sum_{v \in V(G)}x_v=0~\mbox{and}~ \Vert x \Vert_{1}=1\biggl\},
$$
$$
\gamma(G)=\min\biggl\{\max_{uv \in E(G)}\vert x_u - x_v\vert: \sum_{v \in V(G)} x_v =0 ~\mbox{and}~\Vert x \Vert _{\infty}=1\biggl\}.
$$
Here, $\Vert x \Vert_1 := \sum\limits_{i=1}^n |x_i|$ denotes the $l_1$-norm, and $\Vert x \Vert_\infty := \max \{ |x_i| : 1 \leq i \leq n \}$ denotes the $l_\infty$-norm of the vector $x \in \mathbb{R}^n$. These optimization problems are well-defined because the constraint sets are compact and the objective functions are continuous. We refer to $b(G)$ as the $l_1$-Fiedler value. The vectors that achieve the minima are called the \(\ell_1\)-Fiedler vector and the \(\ell_\infty\)-Fiedler vector, respectively. 
 

The main objective of this article is to further investigate the quantity $b(G)$. We first establish a bound on $b(G)$ in terms of the number of vertices and edges of a graph (Theorem~\ref{edge-bound-b(g)}). Using this result, we derive a Nordhaus--Gaddum type bound for $b(G)$ (Theorem~\ref{nord-gaddum}).  For trees, we identify the global maximizer and minimizers for $b(G)$ (Theorem~\ref{extremal trees}), and construct extremal examples for trees with prescribed diameter (Theorem~\ref{diameter-extremizer}), maximum degree (Theorem~\ref{extremal-graph-max-vertex-degree}), and number of pendant vertices (Theorem~\ref{pendant-vertex-extremizer}).  A connection between $b(G)$ and the Laplacian matrices of graphs is established in Theorem~\ref{lap-conn}.  We then obtain a bound for $b(G)$ in terms of the edge connectivity of $ G $ (Theorem~\ref{edge-connc-b(g)}), and provide a complete characterization of the graphs for which this bound is sharp (Theorem~\ref{edge-connc-b(g)-equi-chara}). Next, we derive a formula describing the change in $b(G)$ under the addition of pendent vertices (Theorem~\ref{vertex-add}). In Section~\ref{iso-connection}, we investigate the relationship between the isoperimetric number and the quantity $b(G)$.

\section{Preliminary results}

We denote by \(K_n, C_n, P_n,\) and \(S_n\) the complete graph, cycle graph, path graph, and star graph on \(n\) vertices, respectively. The degree of a vertex $v$ in a graph is denoted by $\deg(v)$. A vertex $v$ of a graph $G$ is called a pendant vertex if $\deg(v)= 1$.  Let  $d_{\min}(G)$ denote the minimum vertex degree of $G$, and $d_{\max}(G)$ is the maximum degree of any vertex in $G$.  A graph $G$ is said to be \textit{regular} if all its vertices have the same degree. That is,  $d_{\max}(G)=d_{\min}(G)$. The average degree of a graph $G$, denoted by $d_{\avg}(G)$, is the average of the degrees of all the vertices in $G$. It is given by:
$$
d_{\avg}(G)=\frac{1}{\vert V(G) \vert} \sum\limits_{u \in V(G)} \deg(u).
$$
If $G$ has $n$ vertices and $m$ edges, then $d_{\avg}(G)=\frac{2m}{n}.$

  For a non-empty subset $S$ of $V(G)$ with $\vert S \vert < n$, the \textit{cut} induced by $S$, denoted by $\partial S_G$, is
defined as follows: $$\partial S_G = \biggl\{uv \in E(G) : u \in S ~\mbox{and}~v \in S^c\biggl\},$$ where $S^c = V(G) \setminus S$. 
We write $\partial S$ in place of $\partial S_G$ when the underlying graph is clear from the context.

The \textit{relative cut-size} is defined as $$\xi (S) =  \frac{\vert \partial S \vert }{\vert S \vert}.$$  The \textit{isoperimetric number} of $G$ is defined as: $$\iso(G) = \min_S \xi (S),$$ where the minimum is taken over all subset $S$ of $V(G)$ with $0 < \vert S \vert  \leq   \lfloor \frac{n}{2}\rfloor.$  A set $S \subseteq V(G)$ with $\vert S \vert \leq  \lfloor \frac{n}{2}\rfloor $ is \textit{isoperimetric} if $\iso(G) = \xi (S).$\\
The \textit{edge density} or \textit{sparsity} of a cut induced by $S$, denoted by $\rho(S)$, is defined as $$\rho(S) = \frac{\vert \partial(S) \vert }{ \vert S \vert \vert S^c \vert}.$$ 
A cut is a \textit{sparsest cut} if it minimizes $\rho(S)$. 
\begin{theorem}[{\cite{enide-geir-2024}}]\label{main theorem} Let $G$ be a graph on $n$ vertices. Then, 
     $$ b(G)=\frac{n}{2} \min_S \rho(S),$$
where the minimum is taken over the non-empty subsets $S$ of $V(G)$ such that $S \neq V(G)$ and both $S$ and $S^c$ induce connected subgraphs of $G$. 
\end{theorem}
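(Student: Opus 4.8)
The plan is to prove the identity by establishing the two inequalities $b(G)\le \frac{n}{2}\min_S\rho(S)$ and $b(G)\ge \frac{n}{2}\min_S\rho(S)$ separately, together with a reduction lemma showing that the constrained minimum in the statement (over cuts with both $S$ and $S^c$ connected) coincides with the unconstrained sparsest-cut value $\rho^*:=\min_{\emptyset\ne S\subsetneq V(G)}\rho(S)$. The upper bound is the easy half: for any nonempty proper $S$ I would test the two-valued vector $x$ with $x_v=\tfrac{1}{2|S|}$ for $v\in S$ and $x_v=-\tfrac{1}{2|S^c|}$ for $v\in S^c$. A direct check gives $\sum_v x_v=0$ and $\Vert x\Vert_1=1$, and since $x$ is constant on $S$ and on $S^c$ only the edges of $\partial S$ contribute, each by $|x_u-x_v|=\tfrac{1}{2|S|}+\tfrac{1}{2|S^c|}=\tfrac{n}{2|S||S^c|}$. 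Hence $\sum_{uv}|x_u-x_v|=\frac{n}{2}\rho(S)$, and minimizing over admissible $S$ yields $b(G)\le\frac{n}{2}\min_S\rho(S)$.

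For the lower bound, let $x$ be an $\ell_1$-Fiedler vector and sort its entries $x_{(1)}\le\cdots\le x_{(n)}$. I would apply the coarea (layer-cake) identity
\[
\sum_{uv\in E(G)}|x_u-x_v|=\sum_{k=1}^{n-1}\bigl(x_{(k+1)}-x_{(k)}\bigr)\,|\partial U_k|,
\]
where $U_k$ is the set of the $n-k$ largest entries. Writing $|\partial U_k|=\rho(U_k)\,k(n-k)$ and bounding each $\rho(U_k)\ge\rho^*$ gives $\sum_{uv}|x_u-x_v|\ge\rho^*\sum_{k}(x_{(k+1)}-x_{(k)})k(n-k)$. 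The combinatorial identity $\sum_k(x_{(k+1)}-x_{(k)})k(n-k)=\sum_{i<j}|x_i-x_j|$ then reduces matters to the norm inequality $\sum_{i<j}|x_i-x_j|\ge\frac{n}{2}\Vert x\Vert_1$ for zero-sum $x$, which I would prove by splitting the pairs into opposite-sign, same-sign, and zero-involving pairs: if $P$, $N$ are the index sets of positive and negative entries and $z=n-|P|-|N|$ is the number of zeros, the opposite-sign pairs contribute $\tfrac{|P|+|N|}{2}$, the pairs meeting a zero contribute $z$, and the same-sign pairs are nonnegative, for a total of at least $\tfrac{n+z}{2}\ge\tfrac{n}{2}$. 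This yields $b(G)\ge\frac{n}{2}\rho^*$.

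It then remains to show $\rho^*=\min_S\rho(S)$ over the connected class, and this reduction lemma is where I expect the main work. Given an unconstrained minimizer $S$, if $G[S]$ is disconnected with components $S_1,\dots,S_r$ then $|\partial S|=\sum_i|\partial S_i|$, while the superadditivity $\sum_i|S_i|(n-|S_i|)\ge|S|(n-|S|)$ (equivalent to $\sum_{i\ne j}|S_i||S_j|\ge 0$) combined with the mediant inequality gives $\rho(S)\ge\min_i\rho(S_i)$; replacing $S$ by its best component strictly reduces the number of components without increasing $\rho$. Once $G[S]$ is connected, I would apply the same reduction to $S^c$, using connectivity of $G$ to argue that each component of $G[S^c]$ is joined to $S$ by an edge, so that the complement of the chosen component remains connected. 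This produces a cut with both sides connected and $\rho=\rho^*$. Combining the three pieces gives $\frac{n}{2}\rho^*\le b(G)\le\frac{n}{2}\min_S\rho(S)=\frac{n}{2}\rho^*$, forcing equality.

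I expect the connectivity reduction to be the crux. The coarea argument is standard but naturally produces sweep sets $U_k$ that may be disconnected, so the genuine content is that passing to the sparsest cut over partitions with both sides connected does not change its value; it is precisely this fact that allows the easy two-valued upper bound to meet the coarea lower bound. (I would also treat the disconnected case of $G$ separately, where both sides of the claimed identity vanish.)
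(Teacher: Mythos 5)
The paper does not prove this statement: Theorem~\ref{main theorem} is quoted verbatim from Andrade and Dahl \cite{enide-geir-2024}, so there is no in-paper argument to compare against. Judged on its own, your outline is correct and complete: the two-valued test vector computation for the upper bound checks out (each cut edge contributes $\tfrac{1}{2|S|}+\tfrac{1}{2|S^c|}=\tfrac{n}{2|S||S^c|}$); the coarea identity, the coefficient count $k(n-k)$, and the zero-sum inequality $\sum_{i<j}|x_i-x_j|\ge\tfrac{n}{2}\Vert x\Vert_1$ (via the $\tfrac{|P|+|N|}{2}+z=\tfrac{n+z}{2}$ bookkeeping) are all verified correctly; and the component-reduction lemma, using $|\partial S|=\sum_i|\partial S_i|$, the superadditivity of $|S|(n-|S|)$ over components, and the mediant inequality, does establish that the unconstrained sparsest-cut value is attained on a cut with both sides connected, with the connectivity of $G$ correctly invoked to keep $V\setminus C_j$ connected in the second pass. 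You are also right that the reduction lemma is the real content. One small caveat: for disconnected $G$ with three or more components the constrained family in the statement can be empty, so ``both sides vanish'' is not quite the right disposal of that case; the theorem should simply be read for connected $G$, which is the setting of the source.
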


Next, we collect some of the known bounds on $b(G)$ in terms of other graph parameters. The proofs can be found in \cite{enide-geir-2024}.

\begin{theorem}[{\cite{enide-geir-2024}}]\label{b(g)-min-deg-cutsize}  Let $G$ be a graph with $m$ edges, and $\lambda_1(G)$ denote the largest eigenvalue of $L(G)$. Then we have the following: 
\begin{itemize}
\item[(a)] $\frac{a(G)}{2} \leq b(G) \leq \frac{\lambda_1(G)}{2}$.
\item[(b)] $\min\limits_S \xi(S) \leq b(G) \leq \frac{n}{2(n-1)}d_{\min}(G).$
\item[(c)] $b(G)\leq \sqrt{ma(G)}.$

\end{itemize}    
\end{theorem}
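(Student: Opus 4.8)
The plan is to prove all three parts from a single source. For any nonempty proper $S\subseteq V(G)$, consider the signed-indicator vector $y^{(S)}$ defined by $y^{(S)}_v = |S^c|$ for $v\in S$ and $y^{(S)}_v = -|S|$ for $v\in S^c$. Then $\sum_v y^{(S)}_v = |S||S^c| - |S^c||S| = 0$, so $y^{(S)}\perp\mathbf 1$, and a direct computation gives $\|y^{(S)}\|_2^2 = |S||S^c|^2 + |S^c||S|^2 = n|S||S^c|$ together with $(y^{(S)})^{\top}L(G)\,y^{(S)} = \sum_{uv\in E}(y^{(S)}_u - y^{(S)}_v)^2 = n^2|\partial S|$, since each boundary edge contributes $(|S^c|+|S|)^2 = n^2$ and every other edge contributes $0$. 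Hence the Rayleigh quotient of $y^{(S)}$ equals exactly $n\rho(S)$. Because $\lambda_1(G)$ is the maximum Rayleigh quotient and $a(G)$ is the minimum over vectors orthogonal to $\mathbf 1$, the Courant--Fischer theorem sandwiches this value as $a(G)\le n\rho(S)\le\lambda_1(G)$ for every such $S$. Applying this to the optimal set $S^{*}$ from Theorem~\ref{main theorem}, for which $\rho(S^{*}) = \min_S\rho(S) = \tfrac{2}{n}b(G)$, the left inequality reads $a(G)\le n\rho(S^{*}) = 2b(G)$ and the right reads $2b(G) = n\rho(S^{*})\le\lambda_1(G)$; dividing by $2$ proves part (a).

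For part (b) I would treat the two inequalities separately. For the upper bound I test the variational definition of $b(G)$ with a vector concentrated at a minimum-degree vertex $v$: put $x_v = \tfrac{n-1}{n}c$ and $x_u = -\tfrac{1}{n}c$ for $u\neq v$, where $c = \tfrac{n}{2(n-1)}$ makes $\sum_u x_u = 0$ and $\|x\|_1 = 1$. Every edge not incident to $v$ contributes $0$ and each of the $\deg(v) = d_{\min}(G)$ edges at $v$ contributes $|x_v - x_u| = c$, so $b(G)\le d_{\min}(G)\,c = \tfrac{n}{2(n-1)}d_{\min}(G)$. For the lower bound I invoke Theorem~\ref{main theorem} again, taking the optimal $S^{*}$ and relabelling so that $S^{*}$ is the larger side, whence $|S^{*}|\ge n/2$ and $|S^{*c}|\le n/2$. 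Then $b(G) = \tfrac{n}{2}\cdot\tfrac{|\partial S^{*}|}{|S^{*}||S^{*c}|} = \tfrac{n}{2|S^{*c}|}\,\xi(S^{*})\ge\xi(S^{*})\ge\min_S\xi(S)$, where the first inequality uses $\tfrac{n}{2|S^{*c}|}\ge 1$.

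For part (c) I would use the ordinary Fiedler vector $z$ (so $\sum_v z_v = 0$, $\|z\|_2 = 1$, and $z^{\top}L(G)z = \sum_{uv\in E}(z_u - z_v)^2 = a(G)$) as a feasible test vector after $\ell_1$-normalization, $x = z/\|z\|_1$. Cauchy--Schwarz across the $m$ edges gives $\sum_{uv\in E}|z_u - z_v|\le\sqrt{m}\,\bigl(\sum_{uv\in E}(z_u - z_v)^2\bigr)^{1/2} = \sqrt{m\,a(G)}$, while $\|z\|_1\ge\|z\|_2 = 1$. Combining, $b(G)\le\sum_{uv\in E}|x_u - x_v| = \tfrac{1}{\|z\|_1}\sum_{uv\in E}|z_u - z_v|\le\sqrt{m\,a(G)}$.

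The only computations are the Rayleigh quotient of $y^{(S)}$ and the edge bookkeeping for the spike vector, both routine. The one genuinely delicate point is the lower bound in part (b): one must pass to the \emph{larger} side of the optimal cut so that the factor $\tfrac{n}{2|S^{*c}|}$ is at least $1$, since choosing the smaller side would make that factor less than $1$ and reverse the inequality. The rest is bookkeeping, and everything rests on the exact identity that the signed-indicator Rayleigh quotient equals $n\rho(S)$.
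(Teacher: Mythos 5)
Your proof is correct in all three parts. Note, however, that the paper does not prove this theorem at all: it is quoted verbatim from Andrade and Dahl with the remark that ``the proofs can be found in \cite{enide-geir-2024}{}'', so there is no in-paper argument to compare against. Your derivation is a legitimate self-contained route: the signed-indicator vector $y^{(S)}$ with Rayleigh quotient exactly $n\rho(S)$ cleanly sandwiches $2b(G)=n\rho(S^{*})$ between $a(G)$ and $\lambda_1(G)$ via Theorem~\ref{main theorem}; the spike vector at a minimum-degree vertex is exactly the right test vector for the upper bound in (b); and the $\ell_1$-renormalized Fiedler vector with Cauchy--Schwarz and $\Vert z\Vert_1\ge\Vert z\Vert_2$ gives (c). Two small remarks. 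First, your whole argument leans on Theorem~\ref{main theorem} (itself imported from the same reference), which is logically fine here but means parts (a) and (b) are not proved ``from scratch'' from the variational definition; the direct route would test the $\ell_1$-Fiedler vector against the $\ell_2$ problem and vice versa. Second, you correctly and tacitly read $\min_S\xi(S)$ in (b) as ranging over \emph{all} nonempty proper subsets $S$, not only those with $\vert S\vert\le\lfloor n/2\rfloor$ as in the paper's definition of $\iso(G)$; this reading is forced (otherwise (b) would contradict Theorem~\ref{b(g)-iso} via the star graph, for which $b(S_n)<1=\iso(S_n)$), and your observation that one must take the \emph{larger} side of the sparsest cut as the witness is precisely the point that makes this work.
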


 We can find the sparsest cut for trees explicitly as follows \cite{enide-geir-2024}: Let $T=(V, E)$ be a tree, and let $u v \in E $. Then $T \backslash\{uv\}$ consists of two disjoint trees $T_u$ and $T_v$ defined as follows: $T_u=(V_u, E_u)$ is the subtree containing the vertex  $u$, and  $T_v=(V_v, E_v)$ is the subtree containing the vertex $v$. An edge $uv$ is a center edge if $ \Bigl|\vert V_u\vert-\vert V_v\vert \Bigl|\,$is the smallest possible.
 

 \begin{theorem}[{\cite[Theorem 5.1.]{enide-geir-2024}}]\label{tree case}
     Let $T=(V, E)$ be a tree. Then there exists a centre edge $uv$ such that the cut induced by $V_u$ is a sparsest cut, and 
\begin{equation*}
b(T)=\frac{1}{2}\left(\frac{1}{\vert V_u \vert} +\frac{1}{\vert V_v\vert} \right).
\end{equation*} 
 \end{theorem}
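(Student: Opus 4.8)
The plan is to invoke the sparsest-cut characterization of $b(T)$ from Theorem~\ref{main theorem} and reduce the optimization over feasible subsets to a purely combinatorial statement about trees. Recall that Theorem~\ref{main theorem} gives $b(T) = \frac{n}{2}\min_S \rho(S)$, where the minimum ranges over proper non-empty $S \subseteq V$ for which both $S$ and $S^c$ induce connected subgraphs. The first and central step is to identify exactly which subsets $S$ are feasible when $T$ is a tree.

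I would show that for a tree the feasible sets are precisely the vertex sets of the two components obtained by deleting a single edge. Deleting an edge $uv$ of a tree splits it into two subtrees $T_u$ and $T_v$, both connected, so $S = V_u$ is feasible with $\vert\partial S\vert = 1$. Conversely, suppose both $S$ and $S^c$ induce connected subgraphs. If the cut $\partial S$ contained two distinct edges $e_1 = x_1 y_1$ and $e_2 = x_2 y_2$ with $x_1, x_2 \in S$ and $y_1, y_2 \in S^c$, then connectivity of $S$ yields a path from $x_1$ to $x_2$ inside $S$ and connectivity of $S^c$ yields a path from $y_1$ to $y_2$ inside $S^c$; concatenating these paths with $e_1$ and $e_2$ produces a cycle in $T$, contradicting that $T$ is acyclic. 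Hence $\vert\partial S\vert = 1$, so $S$ arises from deleting a single edge. This cycle-exclusion argument is the main (and essentially the only) nontrivial point; everything after it is bookkeeping.

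Having established this, the optimization simplifies dramatically. For every feasible $S = V_u$ we have $\vert\partial S\vert = 1$ and $\vert S^c\vert = \vert V_v\vert = n - \vert V_u\vert$, so
$$\rho(S) = \frac{1}{\vert V_u\vert\,\vert V_v\vert} = \frac{1}{\vert V_u\vert\,(n - \vert V_u\vert)}.$$
Minimizing $\rho(S)$ over feasible $S$ is therefore equivalent to maximizing the product $\vert V_u\vert\,(n - \vert V_u\vert)$ over the $n-1$ edges of $T$. Since $\vert V_u\vert + \vert V_v\vert = n$ is fixed, this product is largest exactly when $\bigl\vert\, \vert V_u\vert - \vert V_v\vert \,\bigr\vert$ is smallest, which is precisely the defining property of a center edge $uv$. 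Thus a minimizing subset is the component $V_u$ of a center edge, and the cut it induces is a sparsest cut.

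Finally, substituting back into Theorem~\ref{main theorem} and using $n = \vert V_u\vert + \vert V_v\vert$ gives
$$b(T) = \frac{n}{2}\cdot\frac{1}{\vert V_u\vert\,\vert V_v\vert} = \frac{\vert V_u\vert + \vert V_v\vert}{2\,\vert V_u\vert\,\vert V_v\vert} = \frac{1}{2}\left(\frac{1}{\vert V_u\vert} + \frac{1}{\vert V_v\vert}\right),$$
which is the claimed identity, completing the proof.
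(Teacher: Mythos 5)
Your proposal is correct and follows essentially the same route as the paper's argument (given there for the strengthened Theorem~\ref{extended centre edge result}): apply Theorem~\ref{main theorem}, observe that a feasible $S$ in a tree must satisfy $\vert\partial S\vert=1$ and hence arises from deleting a single edge, and then note that minimizing $\rho(S)=1/(\vert V_u\vert\,\vert V_v\vert)$ amounts to maximizing $\vert V_u\vert\,\vert V_v\vert$, which under the constraint $\vert V_u\vert+\vert V_v\vert=n$ is achieved exactly at a centre edge. The only cosmetic differences are that you argue directly rather than by contradiction and that you justify $\vert\partial S\vert=1$ explicitly via the cycle-exclusion (or, equivalently, edge-counting) argument, which the paper merely asserts.
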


A \textit{substar} in a tree $T$ is a vertex-induced subgraph of $T$ that is a star graph.

\begin{theorem}[{\cite[Corollary 5.2]{enide-geir-2024}}]\label{substar}
Let $T=(V, E)$ be a tree. Then the set of centre edges forms a substar.    
\end{theorem}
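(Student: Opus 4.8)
The plan is to recast the center-edge condition as a maximization over edge cuts, and then prove the purely combinatorial statement that any two center edges are incident to a common vertex; the substar conclusion is then forced by the acyclicity of $T$. Concretely, I would first reformulate the defining quantity: for an edge $e=uv$, deleting $e$ yields the subtrees $T_u,T_v$ with $|V_u|+|V_v|=n$, and since $\bigl| |V_u|-|V_v| \bigr| = n-2\min(|V_u|,|V_v|)$, an edge minimizes the imbalance exactly when it maximizes the size of its smaller side. Accordingly set $s:=\max_{e\in E}\min(|V_u|,|V_v|)$, so that the center edges are precisely the edges attaining $s$.

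The heart of the argument is to show that any two center edges share a vertex. Suppose not: let $e_1=u_1v_1$ and $e_2=u_2v_2$ be center edges with four distinct endpoints. Relative to the unique path joining them, partition $V=X\cup M\cup W$, where $X$ is the component of $T-e_1$ avoiding $e_2$, $W$ is the component of $T-e_2$ avoiding $e_1$, and $M$ is the remaining ``middle.'' Writing $x=|X|$ and $w=|W|$, the center condition for $e_1$ gives $s=\min(x,n-x)\le x$ and that for $e_2$ gives $s=\min(w,n-w)\le w$, so $x\ge s$ and $w\ge s$. Now let $f$ be the edge joining $v_1$ to its neighbour on the path toward $u_2$. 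Deleting $f$ leaves one side containing $X\cup\{v_1\}$ and the other containing $W$ together with that neighbour; since both $v_1$ and the neighbour lie in $M$, they are genuinely new vertices, so the two sides have at least $x+1$ and $w+1$ vertices. Hence the smaller side of $f$ is at least $\min(x,w)+1\ge s+1>s$, contradicting the maximality of $s$. Therefore every two center edges are adjacent.

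It remains to upgrade ``pairwise adjacent'' to ``commonly incident.'' In a tree, a family of pairwise-intersecting edges must share a single vertex: if $e_1=pa$ and $e_2=pb$ meet at $p$ while a third edge $e_3$ misses $p$ yet meets both, then $e_3=ab$ and $\{p,a,b\}$ spans a triangle, which is impossible. Thus all center edges pass through one vertex $c$, so they are $cv_1,\dots,cv_k$; and no two $v_i,v_j$ can be adjacent, since that would again create a triangle with $c$. Consequently the subgraph induced on $\{c,v_1,\dots,v_k\}$ is exactly the star on these edges, and the set of center edges forms a substar (the degenerate case of a single center edge giving $S_2$).

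The main obstacle is the middle inequality: one must be certain that the intermediate edge $f$ strictly separates more than $X$ from more than $W$, so that both of its sides exceed $x$ and $w$ respectively. The clean resolution is the pair of bounds $x\ge s$ and $w\ge s$, which fall out immediately from the two center conditions and eliminate any need for case analysis on which branch of each minimum is active.
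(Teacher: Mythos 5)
Your proof is correct. Note that the paper itself offers no proof of this statement: it is quoted as Corollary 5.2 of Andrade and Dahl, so there is nothing internal to compare against, and your argument must be judged on its own. It holds up. The reformulation $\bigl||V_u|-|V_v|\bigr| = n - 2\min(|V_u|,|V_v|)$ correctly turns centre edges into the maximizers of $s=\max_e\min(|V_u|,|V_v|)$; the key step, that two vertex-disjoint centre edges $e_1,e_2$ force an intermediate edge $f$ whose smaller side has size at least $\min(x,w)+1\ge s+1$, is sound because the component of $T-f$ on the $e_1$ side contains $X\cup\{v_1\}$ and the other side contains $W$ together with $v_1$'s path-neighbour, both genuinely outside $X$ and $W$; and the upgrade from pairwise intersection to a common vertex, plus the absence of edges among the outer endpoints, uses only triangle-freeness of a tree. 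The only cosmetic caveat is the degenerate case of a single centre edge, where the induced subgraph is $K_2$, which you correctly flag as the star $S_2$. Incidentally, Andrade and Dahl's own route (in the cited source) goes through the analysis of their Theorem 5.1 on sparsest cuts of trees, whereas yours is a purely combinatorial balance argument independent of $b(T)$; that makes your proof more self-contained and arguably more elementary.
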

Next, we recall some well-known graphs for which $b(G)$ is calculated in \cite{enide-geir-2024}. 

\begin{theorem}[{\cite{enide-geir-2024}}]\label{b_cycle_path_star_compl}
    \begin{enumerate}
        \item $b(K_n)=\frac{n}{2}$
        \item $b(C_n)=\frac{4}{n}$ if $n\geq 4$ and $n$ is even, and $b(C_n)=\frac{n}{\lfloor \frac{n}{2}  \rfloor. \lceil \frac{n}{2} \rceil }$ if $n \geq 3$ and $n$ is odd.
        \item $b(P_n)=\frac{2}{n}$ if n is even and $b(P_n)=\frac{2n}{n^2 -1}$ if n is odd.
        \item $b(S_n)= \frac{1}{2}+\frac{1}{2(n-1)}$.
    \end{enumerate}
\end{theorem}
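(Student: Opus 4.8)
The plan is to split the four cases into two groups according to whether the graph is a tree. The path $P_n$ and the star $S_n$ are trees, so I would apply the explicit formula of Theorem~\ref{tree case}, whereas the complete graph $K_n$ and the cycle $C_n$ are handled directly through the sparsest-cut characterization of Theorem~\ref{main theorem}. In each case the work reduces to identifying the optimal cut and evaluating the corresponding quantity.

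For the star $S_n$, every edge joins the central vertex to a leaf, so deleting any edge $uv$ splits $S_n$ into a single leaf and a copy of $S_{n-1}$; hence $|V_u|=1$ and $|V_v|=n-1$ (up to labelling) for every edge, and every edge is a centre edge. Substituting into Theorem~\ref{tree case} gives $b(S_n)=\tfrac12\bigl(1+\tfrac{1}{n-1}\bigr)=\tfrac12+\tfrac{1}{2(n-1)}$. For the path $P_n$ with vertices labelled $1,\dots,n$ in order, deleting the edge $\{i,i+1\}$ produces components of sizes $i$ and $n-i$, so a centre edge minimizes $|2i-n|$. When $n$ is even the minimum is attained at $i=n/2$ with both parts of size $n/2$, and when $n$ is odd it is attained at $i=(n-1)/2$ with parts of sizes $(n-1)/2$ and $(n+1)/2$. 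Plugging these into Theorem~\ref{tree case} yields $b(P_n)=\tfrac{2}{n}$ for even $n$ and $b(P_n)=\tfrac{1}{n-1}+\tfrac{1}{n+1}=\tfrac{2n}{n^2-1}$ for odd $n$.

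For $K_n$ and $C_n$ I would compute $\min_S\rho(S)$ over subsets $S$ for which both $S$ and $S^c$ induce connected subgraphs, as permitted by Theorem~\ref{main theorem}. In $K_n$ every induced subgraph is connected, so every proper nonempty $S$ is admissible; if $|S|=k$ then $\partial S$ consists of all $k(n-k)$ cross edges, so $\rho(S)=\tfrac{k(n-k)}{k(n-k)}=1$ identically, whence $b(K_n)=\tfrac{n}{2}$. In $C_n$ the admissible sets are exactly the contiguous arcs: an arc $S$ of length $k$ has a connected complement (the complementary arc) and satisfies $|\partial S|=2$ for every $1\le k\le n-1$, so $\rho(S)=\tfrac{2}{k(n-k)}$. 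Minimizing $\rho$ amounts to maximizing $k(n-k)$ over integers $k$, which is achieved at $k=\lfloor n/2\rfloor$, giving $\min_S\rho(S)=\tfrac{2}{\lfloor n/2\rfloor\,\lceil n/2\rceil}$ and hence $b(C_n)=\tfrac{n}{\lfloor n/2\rfloor\,\lceil n/2\rceil}$; this reduces to $\tfrac{4}{n}$ when $n$ is even.

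The computations are routine; the only steps needing genuine care are the two combinatorial observations underlying the sparsest-cut evaluations. For $K_n$ one must note that the connectivity restriction in Theorem~\ref{main theorem} is vacuous, so $\rho$ is constant over all proper nonempty subsets. For $C_n$ the crux is the claim that the admissible subsets are precisely the arcs and that each arc contributes exactly two boundary edges; once this is established the problem collapses to the elementary integer optimization of $k(n-k)$. I expect this identification of the admissible cut sets, rather than any calculation, to be the main point to verify.
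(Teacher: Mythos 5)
Your proposal is correct in all four cases. The paper itself gives no proof of this theorem---it is quoted verbatim from Andrade and Dahl \cite{enide-geir-2024}---so there is nothing internal to compare against, but your derivation is exactly the natural one: the tree formula of Theorem~\ref{tree case} handles $P_n$ and $S_n$, and the sparsest-cut characterization of Theorem~\ref{main theorem} handles $K_n$ and $C_n$, with the two combinatorial observations you flag (every proper nonempty subset of $K_n$ is admissible with $\rho\equiv 1$; the admissible subsets of $C_n$ are precisely the arcs, each with boundary size $2$) being the only points requiring justification, and both being correct.
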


The complement of a graph $G$, denoted by $G^c$, is the graph with the vertex set $V(G)$ and two distinct vertices of $G^c$ are adjacent if and only if they are not adjacent in $G$.
The join of $G$ and $H$, denoted by $G \vee H$, is the graph with vertex set $V(G) \cup V(H)$ and the edge set is given by
$$
E(G \vee H)= E(G) \cup E(H) \cup  \{uv| u \in V(G), v \in V(H)\}.
$$
\begin{theorem}[{\cite{MR4281912}\label{alg-low-bound}}]
    Let $G$ be a graph with $V(G)=n \geq 2$. Then
    $$
a(G)+a(G^c) \geq 1
    $$
    The equality holds if and only if $G$ or $G^c$ is isomorphic to the join of an isolated vertex and a disconnected graph of order $n-1$.
\end{theorem}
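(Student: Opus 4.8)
The plan is to recast the inequality as a bound on the \emph{Laplacian spread} of $G$ and to exploit the complement identity $L(G)+L(G^c)=L(K_n)=nI-J$, where $I$ is the $n\times n$ identity and $J$ the all-ones matrix. Since $L(G)\mathbf{1}=0$ and every Laplacian commutes with $J=\mathbf{1}\mathbf{1}^{T}$, on the hyperplane $\mathbf{1}^{\perp}$ one has $L(G^c)=nI-L(G)$; hence the eigenvalues of $L(G^c)$ other than the one belonging to $\mathbf{1}$ are exactly the numbers $n-\lambda$ as $\lambda$ runs over the eigenvalues of $L(G)$ on $\mathbf{1}^{\perp}$. In particular the second smallest eigenvalue of $L(G^c)$ equals $n$ minus the largest eigenvalue of $L(G)$, that is, $a(G^c)=n-\lambda_1(G)$. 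Consequently
\[
a(G)+a(G^c)=n-\bigl(\lambda_1(G)-a(G)\bigr),
\]
so the asserted inequality is equivalent to $\lambda_1(G)-a(G)\le n-1$, and equality corresponds to Laplacian spread exactly $n-1$.

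First I would settle the case in which $G$ or $G^c$ is disconnected; recall that at least one of $G,G^c$ is always connected, so this case and the next are exhaustive. Suppose $G$ is disconnected with components of orders $n_1,\dots,n_k$, $k\ge 2$. Then $a(G)=0$, and since the Laplacian spectrum of a disjoint union is the union of the spectra, the bound $\lambda_1(H)\le|V(H)|$ gives $\lambda_1(G)=\max_i\lambda_1(G_i)\le\max_i n_i\le n-1$. Therefore $a(G^c)=n-\lambda_1(G)\ge 1$ and the inequality holds. For equality one needs $\lambda_1(G)=n-1$; since each $\lambda_1(G_i)\le n_i\le n-1$, this forces a single component $F$ of order $n-1$ with the remaining vertex isolated, together with $\lambda_1(F)=|V(F)|$, i.e. $F^c$ disconnected. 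Writing $G=K_1\sqcup F$ and using $(A\sqcup B)^c=A^c\vee B^c$ yields $G^c=K_1\vee F^c$ with $F^c$ disconnected of order $n-1$, which is exactly the stated extremal family. The case where $G^c$ is disconnected is symmetric.

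It remains to treat graphs for which both $G$ and $G^c$ are connected, and this is where I expect the real difficulty to lie. Here $a(G)>0$ and $\lambda_1(G)<n$, and one must prove the \emph{strict} bound $\lambda_1(G)-a(G)<n-1$; since every extremal graph found above has a disconnected complement, strictness in this regime simultaneously finishes the inequality and confirms that the equality graphs are precisely those listed. Crude estimates are not enough: combining the classical lower bound $a(G)\ge 2\bigl(1-\cos(\pi/n)\bigr)$ for connected graphs \cite{alg-con} with $\lambda_1(G)\le n$ only yields $\lambda_1(G)-a(G)\le n-2\bigl(1-\cos(\pi/n)\bigr)$, which already exceeds $n-1$ for $n\ge 5$. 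A structural argument is therefore required, and the natural approach is to study a unit eigenvector $w\perp\mathbf{1}$ of $\lambda_1(G)$ via $w^{T}L(G)w=\sum_{uv\in E(G)}(w_u-w_v)^2$ together with the identity $w^{T}L(G^c)w=n-\lambda_1(G)$, aiming to show that spread $n-1$ would force an isolated vertex in $G$ or in $G^c$ and hence contradict connectivity. This is exactly the Laplacian spread bound established in \cite{MR4281912}, on which the quoted statement rests, and reproducing its eigenvector/structural analysis is the main obstacle.
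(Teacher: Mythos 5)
Your reduction is correct: on $\mathbf{1}^{\perp}$ one has $L(G^c)=nI-L(G)$, hence $a(G^c)=n-\lambda_1(G)$ and the claim is equivalent to the Laplacian spread bound $\lambda_1(G)-a(G)\le n-1$. Your disposal of the case where $G$ or $G^c$ is disconnected is also sound, including the equality analysis via the fact that $\lambda_1(F)=|V(F)|$ holds exactly when $F^c$ is disconnected, which recovers the stated extremal family $K_1\vee H$ with $H$ disconnected of order $n-1$. The problem is that, after the reduction, the case you leave open --- both $G$ and $G^c$ connected, where you must prove the strict inequality $\lambda_1(G)-a(G)<n-1$ --- is the entire content of the theorem. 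You correctly observe that the crude estimates fail for $n\ge 5$ and that a structural argument is required, but you then only name the eigenvector analysis of \cite{MR4281912} as ``the main obstacle'' without supplying any of it. Announcing a strategy (show that spread $n-1$ would force an isolated vertex) is not a proof; the published argument resolving this case is a genuinely involved analysis, and nothing in your proposal substitutes for it. So there is a real gap: the inequality is unproved for every graph with both $G$ and $G^c$ connected, and consequently the ``only if'' direction of the equality characterization is also incomplete, since it relies on strictness in exactly that regime.

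For context, the paper does not prove this statement either; it is imported verbatim from \cite{MR4281912} (the resolution of the Laplacian spread conjecture), so there is no in-paper proof to compare against. Your write-up is best read as a correct verification that the quoted theorem is equivalent to the Laplacian spread conjecture, together with a complete treatment of the degenerate disconnected case --- useful framing, but not a proof of the theorem.
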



\section{Nordhaus–Gaddum type bound}

To start with, we derive an upper bound for $b(G)$ in terms of the number of vertices and edges of the graph $G$, and characterize the extremal graphs. 
\begin{theorem}\label{edge-bound-b(g)}
    Let $G$ be a graph on $n$ vertices and $m$ edges. Then  $$b(G)\leq\frac{m}{n-1}.$$ Furthermore, equality holds if and only if $ G \cong K_{n}$.
    \label{3}
\end{theorem}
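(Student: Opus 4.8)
The plan is to obtain the inequality cheaply from a bound already recorded in the excerpt. By Theorem~\ref{b(g)-min-deg-cutsize}(b) we have $b(G) \le \frac{n}{2(n-1)} d_{\min}(G)$, and since $d_{\min}(G) \le d_{\avg}(G) = \frac{2m}{n}$, I would simply chain these together:
$$b(G) \le \frac{n}{2(n-1)} d_{\min}(G) \le \frac{n}{2(n-1)} \cdot \frac{2m}{n} = \frac{m}{n-1}.$$
(If a self-contained derivation of the first step is wanted, it comes from feeding the variational definition of $b(G)$ the feasible vector with $x_w = \tfrac12$ at a minimum-degree vertex $w$ and $x_z = -\tfrac{1}{2(n-1)}$ on the other $n-1$ vertices, whose objective value is $\deg(w)\cdot\frac{n}{2(n-1)}$.)

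For the equality statement, the easy direction is a direct check on $K_n$: by Theorem~\ref{b_cycle_path_star_compl} we have $b(K_n)=\frac{n}{2}$, while $\frac{m}{n-1}=\frac{\binom{n}{2}}{n-1}=\frac{n}{2}$, so equality holds. For the converse I would first note that equality $b(G)=\frac{m}{n-1}$ forces the displayed chain to be tight throughout; in particular $d_{\min}(G)=\frac{2m}{n}=d_{\avg}(G)$, and equality of minimum and average degree forces $G$ to be regular, say $d$-regular. Here I would assume $G$ is connected (the relevant regime for $b$, since a disconnected graph with at least one edge has $b(G)=0<\frac{m}{n-1}$), so that $d \ge 1$.

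The hard part, which I expect to be the main obstacle, is ruling out every $d$-regular connected graph with $d \le n-2$. My tool would be a \emph{two-vertex} test vector: fix an edge $uv$ and set $x_u=x_v=\tfrac14$ and $x_z=-\tfrac{1}{2(n-2)}$ on the remaining $n-2$ vertices. This is feasible, and because only the $|\partial\{u,v\}| = 2d-2$ edges leaving $\{u,v\}$ contribute (the identity $|\partial\{u,v\}|=2d-2$ being the degree count for an edge of a $d$-regular graph), the objective evaluates to
$$(2d-2)\left(\tfrac14 + \tfrac{1}{2(n-2)}\right) = \frac{(d-1)n}{2(n-2)}.$$
Thus $b(G)\le \frac{(d-1)n}{2(n-2)}$, and cross-multiplying shows $\frac{(d-1)n}{2(n-2)} < \frac{nd}{2(n-1)}=\frac{m}{n-1}$ exactly when $d<n-1$. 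Hence no non-complete regular graph attains equality, forcing $d=n-1$, i.e. $G\cong K_n$. The conceptual crux is recognizing that concentrating mass on a \emph{pair} of adjacent vertices strictly outperforms the single-vertex spike whenever the graph falls short of being complete; the surrounding steps (feasibility, the $2d-2$ count, the final comparison) are routine.
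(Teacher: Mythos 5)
Your proposal is correct and follows essentially the same route as the paper: the inequality via $b(G)\le\frac{n}{2(n-1)}d_{\min}(G)\le\frac{n}{2(n-1)}d_{\avg}(G)$, regularity from tightness, and then the cut $\{u,v\}$ around an edge to force $d=n-1$. Your two-vertex test vector is just the variational-form restatement of the paper's computation $\rho(\{u,v\})=\frac{2r-2}{2(n-2)}$ compared against $\frac{r}{n-1}$ (and, if anything, it sidesteps the connectivity caveat in the sparsest-cut formula more cleanly).
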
 

\begin{proof}
By Theorem   \ref{b(g)-min-deg-cutsize}, we have    

    \begin{align*}
   b(G) &\leq \frac{n}{2(n-1)}d_{\min}(G)\\ 
   &\leq \frac{n}{2(n-1)}d_{\avg}(G) \\
   &=\frac{n}{2(n-1)}  \frac{2m}{n}\\
   &=\frac{m}{n-1}.
     \end{align*}
        
  If equality holds in the above, then $d_{\min}=d_{\avg}$. That is, the graph $G$ is regular. 
    
   Let $G$ be an $r$-regular graph.  Then $$b(G)=\frac{nr}{2(n-1)},$$ and, by Theorem \ref{main theorem}, the minimum edge density is  $\frac{r}{n-1}$. Let $u$ and $v$ be two adjacent vertices, and $H$ be the subset consisting only of the vertices $u$ and $v$, along with the edge connecting them. Then
     $$ \rho(H)=\frac{2r-2}{2(n-2)}=\frac{r-1}{n-2}. $$ Since
    $$\frac {r-1}{n-2} \geq \frac{r}{n-1},$$ we obtain $$nr-n-r+1 \geq nr-2r.$$ That is, $ r\geq n-1$. Thus, $r=n-1$, and hence $G\cong K_n$.
    
    If $G \cong K_n$, then, by Theorem \ref{b_cycle_path_star_compl}, we have 
    $$b(K_n)=\frac{n}{2}=\frac{n(n-1)}{2(n-1)}=\frac{m}{n-1}.$$
    
\end{proof}

    From Theorem \ref{edge-bound-b(g)}, we can conclude that among all the connected graphs on $n$ vertices, the complete graph $K_n$ is the unique maximizer for $b(G)$.  Next, we discuss the minimizers.
    
    \begin{theorem}\label{lower bound}
    Let $G$ be a graph on $n$ vertices. Then $b(G) \geq b(P_n)$, where $P_n$ denotes the path graph on $n$ vertices.
    \end{theorem}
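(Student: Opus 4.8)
The plan is to use the sparsest-cut formula of Theorem~\ref{main theorem} to reduce the claim to a purely combinatorial estimate, and then to recognize that the resulting bound is exactly attained by $P_n$. Throughout I would assume $G$ is connected, since a disconnected graph has $b(G)=0$ (take $x$ constant on each component with total sum zero and $\ell_1$-norm one, making every edge difference vanish), so the asserted inequality is really meant for the connected setting discussed just above the statement.

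First I would write
$$
b(G)=\frac{n}{2}\min_S \rho(S)=\frac{n}{2}\min_S \frac{|\partial S|}{|S|\,|S^c|},
$$
where the minimum runs over the admissible subsets of Theorem~\ref{main theorem} (both $S$ and $S^c$ connected). For every such $S$ two elementary facts hold: since $G$ is connected, any cut satisfies $|\partial S|\ge 1$; and since $|S|+|S^c|=n$, the product $|S|\,|S^c|=|S|\bigl(n-|S|\bigr)$ is maximized over integers at $|S|=\lfloor n/2\rfloor$, so $|S|\,|S^c|\le \lfloor n/2\rfloor\lceil n/2\rceil$. Combining these gives $\rho(S)\ge \dfrac{1}{\lfloor n/2\rfloor\lceil n/2\rceil}$ uniformly in $S$, whence
$$
b(G)\ \ge\ \frac{n}{2\,\lfloor n/2\rfloor\,\lceil n/2\rceil}.
$$

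It then remains to identify the right-hand side with $b(P_n)$. I would do a short parity computation: if $n$ is even the denominator is $2\cdot(n/2)^2$, giving $\tfrac{2}{n}$; if $n$ is odd it is $2\cdot\tfrac{n-1}{2}\cdot\tfrac{n+1}{2}$, giving $\tfrac{2n}{n^2-1}$. These are precisely the values of $b(P_n)$ recorded in Theorem~\ref{b_cycle_path_star_compl}, so $b(G)\ge b(P_n)$. Equivalently, one can bypass the case split by invoking Theorem~\ref{tree case}: the balanced centre-edge cut of $P_n$ has $|\partial S|=1$ and $|S|\,|S^c|=\lfloor n/2\rfloor\lceil n/2\rceil$, so it attains $\rho(S)=\dfrac{1}{\lfloor n/2\rfloor\lceil n/2\rceil}$ and hence realizes the displayed lower bound with equality.

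I do not anticipate a serious obstacle: the argument is essentially a one-line estimate once the sparsest-cut formula is in hand. The only points needing care are (i) the standing connectedness assumption, without which the inequality fails, and (ii) the maximization of $|S|\bigl(n-|S|\bigr)$, which must be carried out over integer values of $|S|$ rather than real ones, so that $\lfloor n/2\rfloor\lceil n/2\rceil$ is the correct extremal product. If one additionally wanted an equality characterization (not asserted here), the harder part would be determining exactly which connected graphs admit a balanced cut with a single crossing edge whose two sides are both connected.
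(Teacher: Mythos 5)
Your proposal is correct and follows essentially the same route as the paper's proof: apply the sparsest-cut formula, bound $|\partial S|\ge 1$ and $|S|\,|S^c|\le\lfloor n/2\rfloor\lceil n/2\rceil$, and observe that $P_n$ attains both bounds simultaneously. Your version is in fact slightly more careful than the paper's, since you make the necessary connectedness assumption explicit and carry out the parity check identifying the bound with $b(P_n)$.
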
 
    
    \begin{proof}
    Let $S$ be a subset of $V(G)$ that induces a sparsest cut. Then $$b(G)=\frac{n}{2} \frac{\vert \partial S \vert}{\vert S \vert \vert S^c\vert}.$$ The minimum value of $b(G)$ is obtained when the numerator is as small as possible and the denominator is as large as possible. Note that, the minimum value for $\vert \partial S \vert$  is $1$,  and the maximum value of $\vert S \vert \vert S^c \vert $ is $\frac{n^2}{4}$ if $n$ is even, and $\frac{n^2-1}{4}$ if $n$ is odd.  These two conditions are simultaneously satisfied for the path graph $P_n$.
   \end{proof}

      \begin{remark}
        Note that the path graph $P_n$ is not a unique minimiser for $b(G)$.  Consider a pendant vertex of $P_n$, $n \geq 6$. If this vertex is removed and attached to another vertex on the same side of the centre edge, then $b(G)$ remains unchanged. In fact, later in Section \ref{extremal section}, we will see the construction of other minimisers for $b(G)$. 
        \end{remark}

Note that for any graph $G$ on $n$ vertices,  Theorem \ref{edge-bound-b(g)} implies that $b(G) \leq \frac{n}{2}$. The following result states that even the sum of $b(G)$ and $b(G^c)$ is at most $\frac{n}{2}$. This is a Nordhaus-Gaddum-type bound for $b(G)$.

\begin{theorem}\label{nord-gaddum}
    Let $G$ be a connected graph on $n$ vertices. Then $$\frac{1}{2} < b(G)+ b(G^c) \leq \frac{n}{2}.$$ The equality on the right side holds if and only if $G \cong K_{n}$.

\end{theorem}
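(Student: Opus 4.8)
The plan is to establish the two inequalities separately, using the edge bound of Theorem~\ref{edge-bound-b(g)} for the right-hand side and the algebraic-connectivity comparison of Theorem~\ref{b(g)-min-deg-cutsize}(a) together with Theorem~\ref{alg-low-bound} for the left-hand side. For the upper bound I would apply Theorem~\ref{edge-bound-b(g)} simultaneously to $G$ and to $G^c$. Writing $m$ for the number of edges of $G$, the complement has $\binom{n}{2}-m$ edges, so
\[
b(G)+b(G^c)\le \frac{m}{n-1}+\frac{\binom{n}{2}-m}{n-1}=\frac{\binom{n}{2}}{n-1}=\frac{n}{2}.
\]
Because the two summands on the right already add up to exactly $n/2$, equality in the displayed estimate forces equality in \emph{both} bounds $b(G)\le m/(n-1)$ and $b(G^c)\le(\binom{n}{2}-m)/(n-1)$. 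By the equality clause of Theorem~\ref{edge-bound-b(g)}, the first one (with the connectedness of $G$ excluding the edgeless graph) already yields $G\cong K_n$; conversely, if $G\cong K_n$ then $G^c$ is edgeless and $b(G)+b(G^c)=\tfrac{n}{2}+0=\tfrac{n}{2}$ by Theorem~\ref{b_cycle_path_star_compl}. This disposes of the right inequality and its equality case.

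For the lower bound I would combine $b(G)\ge a(G)/2$ and $b(G^c)\ge a(G^c)/2$ from Theorem~\ref{b(g)-min-deg-cutsize}(a) with the inequality $a(G)+a(G^c)\ge 1$ of Theorem~\ref{alg-low-bound}, obtaining
\[
b(G)+b(G^c)\ge \tfrac12\bigl(a(G)+a(G^c)\bigr)\ge \tfrac12 .
\]
The substantive point, and the step I expect to be the main obstacle, is upgrading this to a \emph{strict} inequality. Suppose instead that $b(G)+b(G^c)=\tfrac12$; then every inequality above is an equality, so in particular $a(G)+a(G^c)=1$. By the equality characterization in Theorem~\ref{alg-low-bound}, one of $G,G^c$ is the join of an isolated vertex with a disconnected graph on $n-1$ vertices. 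If $G^c$ had this form it would possess a universal vertex, forcing an isolated vertex in $G$ and contradicting connectedness; hence $G\cong K_1\vee H$ for some disconnected $H$, and in particular $G$ has a universal vertex $w$.

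It then remains to show that any graph with a universal vertex satisfies $b(G)>\tfrac12$, which contradicts $b(G)+b(G^c)=\tfrac12$ since $b(G^c)\ge 0$. For this I would invoke Theorem~\ref{main theorem}: for any admissible cut $S$, say with $w\in S$, the universal vertex alone sends an edge to each vertex of $S^c$, so $|\partial S|\ge|S^c|$ and therefore $\rho(S)\ge |S^c|/(|S|\,|S^c|)=1/|S|\ge 1/(n-1)$; the case $w\in S^c$ follows from the symmetry $\rho(S)=\rho(S^c)$. Consequently
\[
b(G)=\frac{n}{2}\min_S\rho(S)\ge \frac{n}{2}\cdot\frac{1}{n-1}=\frac{n}{2(n-1)}>\frac12,
\]
which gives the required contradiction and hence the strict inequality $b(G)+b(G^c)>\tfrac12$.
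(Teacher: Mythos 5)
Your proposal is correct and follows essentially the same route as the paper: the upper bound and its equality case via Theorem~\ref{edge-bound-b(g)} applied to both $G$ and $G^c$, and the lower bound via $b\ge a/2$ plus Theorem~\ref{alg-low-bound}, with strictness obtained by showing that a graph with a dominating vertex has $|\partial S|\ge\min(|S|,|S^c|)$ for every admissible cut and hence $b(G)\ge \frac{n}{2(n-1)}>\frac12$. Your handling of which of $G,G^c$ is the join (ruling out $G^c$ by connectedness of $G$) is in fact slightly more careful than the paper's ``without loss of generality.''
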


\begin{proof}
By Theorem \ref{edge-bound-b(g)}, we have $b(G)\leq \frac{m}{n-1}$ and $b(G^c)\leq \frac{1}{n-1}(\frac{n(n-1)}{2}-m)$. Adding both inequalities gives the desired result. Equality holds if and only if $b(G)= \frac{m}{n-1}$ and $b(G^c)= \frac{1}{n-1}(\frac{n(n-1)}{2}-m)$. By Theorem \ref{edge-bound-b(g)}, $G$ must be $K_n$.

From Theorem \ref{b(g)-min-deg-cutsize} and Theorem \ref{alg-low-bound},  we obtain
$$
b(G)+ b(G^c) \geq \frac{a(G)+ a(G^c)}{2} \geq \frac{1}{2}.
$$
 By Theorem \ref{alg-low-bound}, the equality on the right side holds if and only if $G$ or $G^c$ is isomorphic to the join of an isolated vertex and a disconnected graph on $(n-1)$ vertices. 
Without loss of generality, let $G=\{v\} \vee H$ where $H$ is a disconnected graph on $(n-1)$ vertices. Let $S \subseteq V(G)$ induces a sparsest cut in $G$ with $\vert S \vert \leq \frac{n}{2}$. 
If $S$ does not include the vertex $v$, then  $|\partial S| \geq |S| $ as every vertex of $S$ is adjacent to $v$. If $S$ includes $v$, then also $\vert \partial S\vert \geq n-\vert S \vert\geq\vert S \vert$. Therefore
$$
b(G)=\frac{n}{2} \frac{|\partial S|}{|S||S^c|} \geq \frac{n}{2} \times \frac{1}{|S^c|} >\frac{1}{2}.$$

Thus, $$ b(G)+ b(G^c) > \frac{1}{2}.$$
\end{proof}

\begin{remark}
    Note that the lower bound in the previous theorem is asymptotically tight, i.e., there exists a class of graphs for which the bound is achieved asymptotically as $n \rightarrow \infty$. For, consider the star graph on $n$ vertices. Then,
    $$
b(S_n)+b(S_n ^c)= b(S_n) = \frac{1}{2}+\frac{1}{2(n-1)},
    $$
    as $S_n^c$ is disconnected. Hence $b(S_n)+b(S_n ^c) \rightarrow \frac{1}{2}$ as $n \rightarrow \infty$.
    \end{remark}

\section{Extremal Trees}\label{extremal section}
In this section, we study extremal problems for trees. We begin by proving a strengthened version of Theorem \ref{tree case}. The proof is similar to that of Theorem \ref{tree case} in \cite{enide-geir-2024}, for the sake of completeness we include a proof here. Let $T$ be a tree, and $uv \in E$. Define $T_u=(V_u, E_u)$ and $T_v=(V_v, E_v)$ are the two disjoint subtrees obtained by removing the edge $uv$ such that $u \in V_u$ and $v \in V_v$. 
\begin{theorem}\label{extended centre edge result}
    Let $T=(V, E)$ be a tree. Let $uv \in E$. Then 
    $$
b(T)=\frac{1}{2}\left(\frac{1}{\vert V_u \vert} +\frac{1}{\vert V_v\vert} \right)
    $$
    if and only if $uv$ is a centre edge in $T$.
\end{theorem}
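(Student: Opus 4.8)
The plan is to reduce the statement to the sparsest-cut formula of Theorem~\ref{main theorem} together with the special combinatorics of cuts in a tree. First I would establish the key structural fact: in a tree $T$, a subset $S$ with $\emptyset \neq S \neq V$ induces connected subgraphs on both $S$ and $S^c$ if and only if $\partial S$ consists of exactly one edge. Indeed, every edge of a tree is a bridge, so deleting a single edge $uv$ splits $T$ into precisely the two connected subtrees $T_u$ and $T_v$; conversely, if both $S$ and $S^c$ were connected while $|\partial S| \geq 2$, then two crossing edges, joined by a path inside $S$ and a path inside $S^c$, would produce a cycle, contradicting acyclicity. Hence the cuts admissible in Theorem~\ref{main theorem} are exactly the single-edge cuts $S = V_w$ as $ww'$ ranges over $E$, and for such a cut $\rho(V_w) = \frac{1}{|V_w|\,|V_{w'}|}$.

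Next I would combine this with Theorem~\ref{main theorem}. For every edge $uv \in E$ the set $V_u$ is an admissible cut, so
$$
b(T) = \frac{n}{2}\min_{ww' \in E}\frac{1}{|V_w|\,|V_{w'}|} \;\leq\; \frac{n}{2}\cdot\frac{1}{|V_u|\,|V_v|},
$$
and since $|V_u| + |V_v| = n$ the right-hand side equals $\frac{1}{2}\!\left(\frac{1}{|V_u|} + \frac{1}{|V_v|}\right)$. Thus for a fixed edge $uv$ the claimed identity is equivalent to saying that $V_u$ realizes the minimum sparsity, i.e.\ that $|V_u|\,|V_v|$ is maximal among all edges.

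The final step would match the maximizers of the product with the centre edges. Writing $a = |V_u|$ and $b = |V_v|$ with $a + b = n$ fixed, the identity $ab = \frac{n^2 - (a-b)^2}{4}$ shows that $|V_u|\,|V_v|$ is a strictly decreasing function of $\bigl||V_u| - |V_v|\bigr|$. Therefore maximizing the product is equivalent to minimizing $\bigl||V_u| - |V_v|\bigr|$, which is exactly the defining property of a centre edge. Chaining the equivalences: $uv$ is a centre edge $\iff$ $|V_u|\,|V_v|$ is maximal $\iff$ $V_u$ is a sparsest cut $\iff$ $b(T) = \frac{1}{2}\!\left(\frac{1}{|V_u|} + \frac{1}{|V_v|}\right)$, with the inequality strict for every non-centre edge.

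I do not anticipate a genuine obstacle; the argument is mainly careful bookkeeping around the sparsest-cut formula. The one point requiring care is the structural lemma that admissible cuts in a tree are precisely the single-edge cuts, because Theorem~\ref{main theorem} restricts the minimization to cuts whose two sides are \emph{both} connected; ignoring this restriction would falsely enlarge the set of candidate cuts. The strictness in the ``only if'' direction relies on the strict monotonicity of $ab$ in $|a-b|$, which the displayed identity supplies.
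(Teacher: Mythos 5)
Your proposal is correct and follows essentially the same route as the paper: both reduce to Theorem~\ref{main theorem}, observe that the admissible cuts in a tree are exactly the single-edge cuts $V_w$, and exploit that $|V_u|\,|V_v|$ is strictly decreasing in $\bigl||V_u|-|V_v|\bigr|$ when the sum is fixed at $n$. The only difference is presentational — you chain direct equivalences where the paper runs two arguments by contradiction — and your explicit justification that both-sides-connected forces $|\partial S|=1$ is a welcome elaboration of a step the paper dispatches in one line.
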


\begin{proof}
   Let $uv \in E$ and 
   $$
b(T)=\frac{1}{2}\left(\frac{1}{\vert V_u \vert} +\frac{1}{\vert V_v\vert} \right).
   $$
  Suppose that $uv$ is not a centre edge of $T$. Then, there exists an edge $u^\prime v^\prime$ in $T$ such that $ \Bigl|\,\vert V_{u^\prime} \vert-\vert V_{v^\prime}\vert \Bigl|\, < \Bigl|\,\vert V_u\vert-\vert V_v\vert \Bigl|\,$. Since $\vert V_{u^\prime}\vert+\vert V_{v^\prime}\vert=\vert V_{u}\vert+\vert V_{v}\vert=n$, we must have $\vert V_{u^\prime} \vert \vert V_{v^\prime}\vert >\vert V_{u}\vert \vert V_{v}\vert$. Therefore 
   $$
   \frac{n}{2} \rho(V_{u^\prime})=\frac{n}{2}\frac{1}{\vert V_{u^\prime} \vert \vert V_{v^\prime}\vert}< \frac{n}{2}\frac{1}{\vert V_{u}\vert \vert V_{v}\vert}=b(T),
   $$
   which is a contradiction.
   
   Conversely, let $uv$ be a centre edge in $T$. Then, by Theorem \ref{main theorem},  $b(T) \leq \frac{1}{2}\left(\frac{1}{\vert V_u \vert} +\frac{1}{\vert V_v\vert} \right).$ Suppose that $b(T) <\frac{1}{2}\left(\frac{1}{\vert V_u \vert} +\frac{1}{\vert V_v\vert} \right) .$ 
   Let $S \subseteq V$ be such that both the subgraphs induced by $S$ and $S^c$ are connected  in $T$,  and $b(T)=\frac{n}{2} \frac{\vert\partial S \vert}{\vert S \vert \vert S^c \vert}.$ Since connected subgraphs of a tree are also a tree, both the induced subgraphs are subtrees of $T$. Hence $\vert \partial S\vert=1$. Therefore, $S$ and $S^c$ are obtained by deleting an edge, say  $wz$, from $T$. Without loss of generality, let $S=V_w$ and $S^c=V_z.$ Then,

   \begin{align*}
 \frac{n}{2} \frac{1}{\vert V_{u}\vert \vert V_{v}\vert} &= \frac{1}{2}\left(\frac{1}{\vert V_u \vert} +\frac{1}{\vert V_v\vert} \right)\\
    &>b(T)\\
    &=\frac{n}{2} \frac{\vert\partial S \vert}{\vert S \vert \vert S^c \vert}\\
    &=\frac{n}{2} \frac{\vert \partial V_w \vert}{\vert V_w \vert \vert V_z \vert }\\
    &=\frac{n}{2} \frac{1}{\vert V_w \vert \vert V_z \vert}.
\end{align*}
   Therefore $\vert V_{u}\vert \vert V_{v}\vert < \vert V_w \vert \vert V_z \vert$. Again, by using $\vert V_w\vert+\vert V_z\vert=\vert V_{u}\vert+\vert V_{v}\vert=n$, it follows that 
    $ \Bigl|\,\vert V_w \vert-\vert V_z\vert \Bigl|\, < \Bigl|\,\vert V_u\vert-\vert V_v\vert \Bigl|\,$, contradicts the fact that $uv$ is a centre edge in $T$.
\end{proof}

Next, we study the global extremals for the quantity $b(G)$ among all the trees.
\begin{theorem}\label{extremal trees}
Among all trees of $n$ vertices, the path graph $P_n$ minimizes $b(T)$ and the star graph $S_n$ maximizes $b(T)$. Moreover, $S_n$ is the unique tree that attains the maximum.
\end{theorem}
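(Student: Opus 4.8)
The plan is to reduce both extremal problems to a single optimization over edges, using the explicit tree formula of Theorem~\ref{tree case}. For a tree $T$ and an edge $uv$, deleting $uv$ splits $T$ into parts of sizes $|V_u|$ and $|V_v|$ with $|V_u|+|V_v|=n$. A centre edge is precisely one minimizing $\bigl||V_u|-|V_v|\bigr|$, and since the sum $|V_u|+|V_v|$ is the constant $n$, minimizing this difference is the same as maximizing the product $|V_u|\,|V_v|$. Combining this with the identity $\tfrac12\bigl(\tfrac{1}{|V_u|}+\tfrac{1}{|V_v|}\bigr)=\tfrac{n}{2|V_u||V_v|}$, Theorem~\ref{tree case} can be rewritten as
\[
b(T)=\frac{n}{2\,M(T)},\qquad M(T):=\max_{uv\in E}|V_u|\,|V_v|.
\]
Hence minimizing $b(T)$ over trees is equivalent to maximizing $M(T)$, and maximizing $b(T)$ is equivalent to minimizing $M(T)$.

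For the minimizer, I would use that for any edge the product $|V_u|\,|V_v|=k(n-k)$ (writing $k=|V_u|$) satisfies $k(n-k)\le \lfloor n/2\rfloor\lceil n/2\rceil$, the maximum of the integer function $k\mapsto k(n-k)$ on $\{1,\dots,n-1\}$. Thus $M(T)\le \lfloor n/2\rfloor\lceil n/2\rceil$ for every tree, giving $b(T)\ge \tfrac{n}{2\lfloor n/2\rfloor\lceil n/2\rceil}=b(P_n)$, where the last equality is the value recorded in Theorem~\ref{b_cycle_path_star_compl}. Equality holds for $P_n$ because its centre edge splits it into two paths of sizes $\lfloor n/2\rfloor$ and $\lceil n/2\rceil$, realizing the bound.

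For the maximizer I would invoke the opposite extremal fact: on $\{1,\dots,n-1\}$ the product $k(n-k)$ is smallest at the endpoints $k\in\{1,n-1\}$, where it equals $n-1$. So every edge of every tree contributes product at least $n-1$, whence $M(T)\ge n-1$ and $b(T)\le \tfrac{n}{2(n-1)}=b(S_n)$. The star attains this since deleting any of its edges peels off a single leaf, giving product exactly $n-1$ on every edge.

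The main obstacle, and the only step requiring genuine structural work, is the uniqueness of $S_n$. For this I would show that $M(T)=n-1$ forces $T\cong S_n$: equality means every edge splits off a single vertex, i.e.\ every edge is incident to a leaf. If $T$ were not a star it would have diameter at least $3$ and hence contain a path $a\!-\!b\!-\!c\!-\!d$; deleting the interior edge $bc$ leaves both sides with at least two vertices, so $M(T)\ge 2(n-2)>n-1$ for $n\ge 4$. (For $n\le 3$ one has $P_n=S_n$, so the uniqueness assertion is vacuous.) This contradiction pins down $S_n$ as the unique tree attaining the maximum.
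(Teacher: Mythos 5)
Your proof is correct, and its skeleton matches the paper's: both arguments rest on the centre-edge formula $b(T)=\frac{n}{2|V_u||V_v|}$ and on the elementary bounds $n-1\le k(n-k)\le\lfloor n/2\rfloor\lceil n/2\rceil$ for $k\in\{1,\dots,n-1\}$, which immediately sandwich $b(T)$ between $b(S_n)$ and $b(P_n)$. (The paper actually gets the lower bound by citing its Theorem~\ref{lower bound}, valid for all graphs, but for trees that reduces to exactly your product estimate.) The one place you genuinely diverge is the uniqueness step. The paper argues that $|V_u||V_v|=n-1$ at a centre edge forces $\bigl||V_{u'}|-|V_{v'}|\bigr|=n-2$ for \emph{every} edge, so every edge is a centre edge, and then invokes Theorem~\ref{substar} (the centre edges form a substar) to conclude $T\cong S_n$. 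You instead observe that $M(T)=n-1$ forces every edge to split off a single vertex, and that any non-star has diameter at least $3$, hence an interior edge whose deletion leaves parts of sizes at least $2$ and product at least $2(n-2)>n-1$ for $n\ge 4$. Your route is self-contained and avoids the substar lemma entirely; the paper's route recycles machinery it needs elsewhere. Both are valid, and your handling of the degenerate cases $n\le 3$ (where $P_n=S_n$) is the right way to close the argument.
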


\begin{proof}
The first part of the theorem is follows from \ref{lower bound}.
Let $T$ be a tree on $n$ vertices.  From Theorem \ref{extended centre edge result}, we have $b(T)=\frac{n}{2} \frac{1}{\vert V_{u}\vert \vert V_{v}\vert}$, where $uv$ is a centre edge in $T$. Since $\vert V_{u}\vert \vert V_{v}\vert \geq n-1$, we get $$b(T) \leq \frac{n}{2(n-1)}=b(S_n),$$
by using Theorem \ref{b_cycle_path_star_compl}. Hence, $S_n$ maximizes $b(T)$.

Let $T=(V,E)$ be a tree such that $b(T)=b(S_n)$. Let $uv$ be a centre edge of $T$. Then, by Theorem \ref{extended centre edge result}, $\vert V_{u}\vert \vert V_{v}\vert=n-1$. Without loss of generality, let $\vert V_u\vert=1$ and $\vert V_v \vert=n-1$. Then $ \vert V_v\vert-\vert V_u\vert=n-2$. Then $\Bigl| \vert V_{u^\prime} \vert - \vert V_{v^\prime}\vert \Bigl| \geq n-2$ for every other edge $u^\prime v^\prime$ in $T$. Since the quantity $\Bigl| \vert V_{u^\prime}\vert -\vert V_{v^\prime}\vert \Bigl| $ is at most $n-2$ for any edge $u^\prime v^\prime$ in T, we should have $\Bigl| \vert V_{u^\prime}\vert -\vert V_{v^\prime}\vert \Bigl| =n-2$ for all edges in $T$. Thus every edge of $T$ is a centre edge, and hence, by Theorem \ref{substar},  $T$ must be $S_n$.
\end{proof}

\begin{remark}
\label{1}
Let $T=(V, E)$ be a tree on $n$ vertices. Let $uv$ be a centre edge of $T$. Note that,  $ \Bigl| \vert V_u\vert-\vert V_v\vert \Bigl|=n-2$ if and only if $T \cong S_n$. So, $S_n$ is the only tree where sparsest cut is induced by a singleton set.

\end{remark}
\begin{remark}
\label{2}
    Among all trees on $n$ vertices, the path graph $P_n$ is not the unique tree that minimizes $b(T)$. For example, the following tree on six vertices also attains the minimum value of $b(T)$.
$$    
    \begin{tikzpicture}[node distance={15mm}, thick, main/.style = {draw, circle}] 
\node[main] (1) {$v_1$};
\node[main] (2) [ below right of=1] {$v_2$};
\node[main] (3) [below left of=2] {$v_3$}; 
\node[main] (4) [ right of=2] {$v_4$};
\node[main] (5) [above right of=4] {$v_5$};
\node[main] (6) [below right of=4] {$v_6$};

\draw (2) -- (4);
\draw (2) -- (1);
\draw (2) -- (3);
\draw (5) -- (4);
\draw (6) -- (4);

\end{tikzpicture}
$$
\end{remark}
We now prove a couple of results that facilitate the analysis of extremal trees with fixed diameter, maximum degree, and number of pendant vertices.
\begin{lemma}\label{centre-edge-domination}
    Let $T=(V, E)$ be a tree. Let $wz\in E$, and let the subtree $T_w$ contain a centre edge of $T$. Then $\vert V_w \vert  > \vert V_z \vert$. 
\end{lemma}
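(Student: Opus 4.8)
The plan is to argue by contradiction, pitting the centre edge contained in $T_w$ against the edge $wz$ and invoking the imbalance-minimality that defines a centre edge. Write $n = \vert V_w \vert + \vert V_z \vert$, and suppose toward a contradiction that $\vert V_w \vert \le \vert V_z \vert$. Let $ab$ be a centre edge lying in $T_w$, so that $a,b \in V_w$. Removing $ab$ from $T$ yields the two subtrees $T_a$ and $T_b$; since the two endpoints play symmetric roles, I may label them so that $w$ lies on the $a$-side, i.e.\ $w \in V_a$.

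The key geometric step is to locate all of $V_z$ relative to the edge $ab$. I claim $V_z \subseteq V_a$: any vertex $x \in V_z$ is joined to $b$ by a path that must traverse the unique edge $wz$ (as $x$ and $b$ lie in different components of $T \setminus \{wz\}$), hence passes through $w$; since $w \in V_a$, the sub-path from $w$ to $b$ crosses $ab$ exactly once, placing $x$ on the same side of $ab$ as $a$. Because $w \in V_a \setminus V_z$, this yields $\vert V_a \vert \ge \vert V_z \vert + 1$, and dually $\vert V_b \vert = n - \vert V_a \vert \le \vert V_w \vert - 1$.

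With these two inequalities the contradiction is a short computation. From $\vert V_b \vert \le \vert V_w \vert - 1$ together with $n = \vert V_w \vert + \vert V_z \vert$ I get
$$
\vert V_a \vert - \vert V_b \vert = n - 2\vert V_b \vert \ge \vert V_z \vert - \vert V_w \vert + 2 \ge 2 > 0,
$$
where the last step uses the standing assumption $\vert V_w \vert \le \vert V_z \vert$. Hence $\bigl\lvert \vert V_a \vert - \vert V_b \vert \bigr\rvert \ge \bigl(\vert V_z \vert - \vert V_w \vert\bigr) + 2 > \bigl\lvert \vert V_w \vert - \vert V_z \vert \bigr\rvert$, which says that the edge $wz$ has strictly smaller vertex-count imbalance than the centre edge $ab$ — contradicting the defining minimality of $ab$. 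Therefore $\vert V_w \vert > \vert V_z \vert$.

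I expect the only genuinely delicate point to be the containment $V_z \subseteq V_a$ (equivalently, that removing the centre edge leaves $w$ and all of $T_z$ on a single side); everything after that is bookkeeping with the identity $n = \vert V_w \vert + \vert V_z \vert$. The cleanest way to secure the containment is via the uniqueness of paths in a tree, as sketched above, so that the side of $ab$ on which a vertex falls is decided solely by whether its path to $b$ crosses $ab$.
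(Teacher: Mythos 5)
Your proof is correct and takes essentially the same route as the paper's: both argue by contradiction from the assumption $\vert V_w\vert \le \vert V_z\vert$, both rest on the observation that the side of the centre edge containing $w$ also contains all of $V_z$ plus $w$ itself (your $\vert V_a\vert \ge \vert V_z\vert+1$ and $\vert V_b\vert \le \vert V_w\vert-1$ are exactly the paper's $\vert V_v\vert > \vert V_z\vert$ and $\vert V_w\vert > \vert V_u\vert$), and both then invoke the imbalance-minimality of the centre edge to reach the contradiction. The only difference is cosmetic: you justify the containment $V_z \subseteq V_a$ via unique paths, where the paper asserts the corresponding inequalities after orienting the centre edge by distance to $w$.
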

\begin{proof}
  Let $uv$ be a centre edge of $T$, which is in $T_w$. As $T$ is a tree, there exists a unique path between the vertices $u$ and $w$, say $P$. If $v$ lies in the path $P$, then $d(w,v)=d(w,u)-1$. Otherwise, $P \cup uv$ is a path from $w$ to $v$. Since it is a unique path from $w$ to $v$, we have   $d(w,v)=d(w,u)+1$. So, $d(w,v) \neq d(w,u)$.
    Without loss of generality, let $d(w,v) < d(w,u)$. Then $\vert V_w \vert > \vert V_u \vert$ and $\vert V_v \vert > \vert V_z \vert.$ Note that, $V_w$ and $V_z$ are the components obtained from $T$ by deleting the edge $wz$, and $V_u$ and $V_v$ are the components obtained from $T$ by deleting the edge $uv$.
    
    Suppose that $\vert V_w \vert  \leq \vert V_z \vert$. Then $\Bigl| \vert V_w \vert - \vert V_z\vert \Bigl|=\vert V_z \vert - \vert V_w \vert.$ Since $uv$ is a centre edge in $T$, we have 
    $$
\vert V_z \vert - \vert V_w \vert \geq \Bigl| \vert V_u \vert - \vert V_v\vert \Bigl|\geq \vert V_v \vert - \vert V_u\vert.
    $$
Therefore $\vert V_z \vert \geq \vert V_v \vert + \vert V_w \vert - \vert V_u\vert>\vert V_v \vert$, a contradiction. Thus $\vert V_w \vert  > \vert V_z \vert$. 
    \end{proof}
    \begin{lemma}\label{centre edge unchanged}
 Let $uv$ be a centre edge in the path graph $P_n = (V, E)$. Let $T^\prime =(V^\prime,E^\prime)$ be a tree obtained by attaching  $r$ pendant vertices to either $u$ or $v$. Then $uv$ is a centre edge in $T^\prime$. 
    \end{lemma}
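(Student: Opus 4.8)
The plan is to argue directly from the combinatorial definition of a centre edge, namely the edge $xy$ minimizing $\bigl|\,\vert V_x\vert-\vert V_y\vert\,\bigr|$ over all edges, and to show that $uv$ realizes this minimum in $T'$. Since the two part sizes always sum to the total number of vertices, I will track, for each edge, the \emph{signed} imbalance (size of the side containing the attachment vertex, minus the size of the other side), because attaching pendants affects this quantity in a uniform way.

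First I would set up coordinates: label the vertices of $P_n$ as $1,2,\dots,n$ along the path, so that deleting $\{i,i+1\}$ splits $P_n$ into parts of sizes $i$ and $n-i$. Without loss of generality assume the $r$ pendants are attached to $v$; the argument for $u$ is identical, since the only facts used are that the attachment vertex is an endpoint of the centre edge and that its part has size at least $\lfloor n/2\rfloor$. Writing $b=\vert V_v\vert$ and $a=\vert V_u\vert=n-b$, the centre-edge property for a path gives $b\ge\lfloor n/2\rfloor\ge (n-1)/2$ and $b\le\lceil n/2\rceil$. The edge set of $T'$ consists of the $r$ new pendant edges together with the original path edges, and the key observation is that attaching the pendants at $v$ enlarges the $v$-side of \emph{every} path edge by exactly $r$. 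Hence a path edge whose $v$-side has size $t$ in $P_n$ has imbalance $\vert 2t+r-n\vert$ in $T'$, and $uv$ is the case $t=b$.

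Next I would minimize over the path edges. Moving the cut toward $v$ shrinks the $v$-side, so the two edges incident to $v$ give the smallest $v$-sides, namely $b$ (for $uv$) and $n+1-b$ (for the other incident edge, when $v$ is internal); since $b\le\lceil n/2\rceil$ forces $b\le n+1-b$, the minimum $v$-side over all path edges is $t_{\min}=b$, attained at $uv$. Because $r\ge 1$ yields $b\ge (n-1)/2\ge (n-r)/2$, every achievable $t\ge b$ satisfies $2t+r-n\ge 0$; the absolute value then drops and $2t+r-n$ is nondecreasing in $t$, so it is minimized over the path edges exactly at $t=b$, i.e.\ at $uv$, with value $(b-a)+r$. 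Finally each pendant edge has imbalance $n+r-2$, and $n+r-2\ge 2b+r-n$ holds because $b\le n-1$, so no pendant edge beats $uv$. Combining the two comparisons, $uv$ attains the minimum imbalance over all edges of $T'$ and is therefore a centre edge of $T'$.

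I expect the main obstacle to be the step identifying $t_{\min}=b$ and verifying $2t+r-n\ge 0$ throughout, since this is what allows the absolute value to be removed and collapses the problem to a monotone minimization. The delicate point is the case where $v$ lies on the strictly smaller side of an odd path, so that $b=(n-1)/2$ is as small as possible; there one must confirm that $b\ge (n-r)/2$ still holds, which it does precisely because $r\ge 1$. Everything else is routine bookkeeping once the signed-imbalance formula $\vert 2t+r-n\vert$ and the bounds on $b$ are in place.
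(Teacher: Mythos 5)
There is a genuine gap: your opening ``without loss of generality'' reduction to the case where all $r$ pendants are attached to the single vertex $v$ does not cover the case the lemma is actually meant to handle, namely distributing the pendants between $u$ \emph{and} $v$. The paper's own proof begins ``suppose we add $r_1$ vertices to $u$ and $r_2$ vertices to $v$,'' and the lemma is later invoked (in Theorems~\ref{diameter-extremizer} and~\ref{pendant-vertex-extremizer}) precisely for trees built by attaching pendants \emph{alternately} to $u$ and $v$; so the mixed case is not optional. Your signed-imbalance formula $\vert 2t+r-n\vert$ relies on there being a single attachment vertex so that every edge's ``$v$-side'' gains exactly $r$; in the mixed case this is still true for every path edge \emph{other than} $uv$ (since $u$ and $v$ then lie on the same side), but it fails for $uv$ itself, whose imbalance becomes $\bigl|\,\vert V_v\vert+r_2-\vert V_u\vert-r_1\,\bigr|$. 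The comparison you would need is $\bigl|\,\vert V_v\vert-\vert V_u\vert+r_2-r_1\,\bigr|\le \vert r_1-r_2\vert+1\le r+1$ against the new imbalance of every other edge, which is at least $r+1$; this is exactly the triangle-inequality argument the paper runs (using Lemma~\ref{centre-edge-domination} to control the other path edges). Your monotonicity scheme can be patched to do this, but as written it silently excludes the configurations that matter.

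Within the scope you do treat (all $r$ pendants at one endpoint), the argument is correct and self-contained: the bounds $\lfloor n/2\rfloor\le b\le\lceil n/2\rceil$, the identification of $t_{\min}=b$ at $uv$, the sign check $2t+r-n\ge 0$, and the pendant-edge comparison $n+r-2\ge 2b+r-n$ all hold, and your coordinate computation even replaces the appeal to Lemma~\ref{centre-edge-domination} that the paper uses. But as a proof of the lemma as stated and used, it proves only a special case.
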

    \begin{proof}
Suppose we add $r_1$ vertices to $u$ and $r_2$ vertices to $v$.
Then 
$$
\Bigl| \vert V^\prime_u \vert - \vert V^\prime_v\vert \Bigl|=\Bigl| \vert V_u \vert+r_1 - \vert V_v\vert-r_2 \Bigl|\leq \vert r_1-r_2\vert+\Bigl| \vert V_u \vert - \vert V_v\vert \Bigl| \leq \vert r_1-r_2\vert+1.
$$
Let $n \geq 3$.
First, consider the edges that are newly added to $P_n$. Let $ux$ be such an edge with $\deg(x)=1$ in $T^\prime.$ Then 
$$
\vert V^\prime _u \vert -\vert V^\prime_x\vert=n+r-2\geq r_1+r_2+1 \geq \vert r_1-r_2\vert+1.
$$
Let $wz \in E^\prime \setminus \{uv\}$ be an edge lying on the path $P_n$. Then exactly one of the components $T_w$ or $T_z$ obtained by deleting $wz$ contain the edge $uv$. Without loss of generality, assume that $T_w$ contain the edge $uv$. By Lemma \ref{centre-edge-domination}, it follows that $\vert V_w\vert > \vert V_z \vert$.
Therefore
$$
\Bigl| \vert V^\prime_w \vert - \vert V^\prime_z\vert \Bigl| \geq \vert V^\prime_w \vert - \vert V^\prime_z\vert= \vert V_w \vert +r -\vert V_z \vert\geq r_1+r_2+1 \geq \vert r_1-r_2\vert+1.
$$
If $n=2$, then $P_2=uv$. In this case, every edge of $T^{'}$ except $uv$ is incident with either $u$ or $v$. Let $ux^\prime$ be such an edge, where $\deg(x^\prime)=1$. Then
$$
\vert V^\prime _u \vert -\vert V^\prime_{x ^\prime}\vert=r=r_1+r_2 \geq \vert r_1-r_2\vert=\Bigl| \vert V_u \vert - \vert V_v\vert \Bigl|.
$$
Hence,  $uv$ is a centre edge in $T^\prime$.

    \end{proof}
    \begin{lemma}\label{pendant-vertex-smaller-set-domination}
    Let $T=(V, E)$ be a tree, and $uv$ be a centre edge such that $\vert V_u \vert \leq \frac{n}{2}$. Let $T^\prime=(V^\prime, E^\prime)$ be a tree obtained by adding a pendant vertex to $T$. Let $u^\prime v^\prime$ be a centre edge of $T^\prime$ and $\vert V^\prime_{u^\prime} \vert\leq \frac{n+1}{2}.$ Then $\vert V_u \vert \leq \vert V^\prime_{u^\prime}\vert.$
\end{lemma}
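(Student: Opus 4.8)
The plan is to reformulate the centre-edge condition in terms of the size of the smaller of the two components. For any edge $e$ of a tree $T$ whose deletion yields parts $V_a$ and $V_b$, write $s_T(e) = \min\{|V_a|, |V_b|\}$ for the size of the smaller part. Since $|V_a| + |V_b| = n$ is fixed, we have $\bigl| |V_a| - |V_b| \bigr| = n - 2\,s_T(e)$, so minimizing the difference is the same as maximizing $s_T(e)$. Hence an edge is a centre edge precisely when it maximizes $s_T(\cdot)$ over all edges. Under the hypotheses, the centre edge $uv$ of $T$ satisfies $|V_u| \le n/2 \le |V_v|$, so $|V_u| = s_T(uv) = \max_{e \in E} s_T(e)$; likewise, since $|V'_{u'}| \le (n+1)/2 \le |V'_{v'}|$, the centre edge $u'v'$ of $T'$ gives $|V'_{u'}| = s_{T'}(u'v') = \max_{e' \in E'} s_{T'}(e')$.

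The core observation is that attaching a single pendant vertex never decreases $s$ on any edge that survives. Since $E \subseteq E'$, every edge $e$ of $T$ is also an edge of $T'$, and I would show $s_{T'}(e) \ge s_T(e)$ for each such $e$. Indeed, deleting $e$ in $T'$ produces the same two parts as in $T$ except that the part containing the attachment vertex $p$ has grown by one (the added pendant). If $p$ lies in the smaller part, the smaller size increases by one, possibly becoming the larger; but then the two parts had equal size, so the new smaller size still equals the old value. If $p$ lies in the larger part, the smaller size is unchanged. Either way $s_{T'}(e) \ge s_T(e)$.

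Applying this to the centre edge $e = uv$ of $T$ gives $s_{T'}(uv) \ge s_T(uv) = |V_u|$. Because $uv \in E'$, it follows that
$$
|V'_{u'}| = \max_{e' \in E'} s_{T'}(e') \ \ge\ s_{T'}(uv) \ \ge\ |V_u|,
$$
which is exactly the claim.

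I expect no serious obstacle here; the only points needing care are the boundary cases in which the two components have equal size, so that adding the pendant flips which side is the smaller one, together with the parity bookkeeping between the thresholds $n/2$ and $(n+1)/2$. Both are controlled by the hypotheses $|V_u| \le n/2$ and $|V'_{u'}| \le (n+1)/2$, which guarantee that in each tree the designated side is genuinely the smaller part, so that $s$ coincides with the size of that side and the reformulation above is valid.
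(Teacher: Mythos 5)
Your proof is correct and is essentially the paper's argument in different clothing: the identity $\bigl|\,|V_a|-|V_b|\,\bigr|=n-2\min\{|V_a|,|V_b|\}$ makes your two steps (centre edges maximize the smaller side; the smaller side of $uv$ cannot shrink when a pendant is attached) exactly equivalent to the paper's chain $|V'_{v'}|-|V'_{u'}|\le\bigl|\,|V'_v|-|V'_u|\,\bigr|\le |V_v|-|V_u|+1$. Your handling of the equal-size boundary case is sound, so no changes are needed.
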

\begin{proof}
    Since $u^\prime v^\prime$ is a centre edge in $T^\prime$ and $\vert V^\prime_{u^\prime}\vert \leq \vert V^\prime_{v^\prime}\vert$ , we have $  \vert V^\prime_{v^\prime}\vert - \vert V^\prime_{u^\prime}\vert \leq \Bigl| \vert V^\prime_{v}\vert - \vert V^\prime_{u}\vert \Bigl|$. Now note that exactly one of the following cases holds:
    $$
\vert V^\prime_v \vert=\vert V_v \vert+1 \text{ and } \vert V^\prime_u \vert=\vert V_u \vert,
    $$
    or
    $$
\vert V^\prime_v \vert=\vert V_v \vert \text{ and } \vert V^\prime_u \vert=\vert V_u \vert+1.
    $$
    Hence $$\Bigl| \vert V^\prime_{v}\vert - \vert V^\prime_{u}\vert \Bigl| \leq \Bigl| \vert V_{v}\vert - \vert V_{u}\vert \Bigl|+1=\vert V_{v}\vert - \vert V_{u}\vert+1.$$
    Also $\vert V^\prime_{v^\prime}\vert - \vert V^\prime_{u^\prime}\vert=n+1-2\vert V^\prime_{u^\prime}\vert$ and $\vert V_{v}\vert - \vert V_{u}\vert =n-2\vert V_{u}\vert$. Therefore,
    $$
n+1-2\vert V^\prime_{u^\prime}\vert\leq n-2\vert V_{u}\vert+1,
    $$
   That is,   $\vert V_u \vert \leq \vert V^\prime_{u^\prime}\vert.$

\end{proof}
In the following theorem, we study extremizers for trees with a fixed diameter.
\begin{theorem}\label{diameter-extremizer}
    Let $T$ be a tree on $n$ vertices with fixed diameter $D(\geq 3)$. Then 
    $$
\frac{n}{2\lceil \frac{n}{2}\rceil \lfloor \frac{n}{2}\rfloor} \leq b(T) \leq \frac{n}{2\lceil \frac{D}{2} \rceil(n-\lceil \frac{D}{2} \rceil)}
    .$$ Moreover, the both the bounds are sharp.

\end{theorem}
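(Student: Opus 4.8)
The plan is to reduce the entire statement to controlling a single quantity, namely the maximum over edges of the product of the two component sizes. By Theorem~\ref{extended centre edge result}, if $uv$ is a centre edge then $b(T)=\frac{n}{2|V_u||V_v|}$, and since a centre edge is exactly an edge maximizing $|V_u||V_v|$, we may write $b(T)=\frac{n}{2M}$ with $M:=\max_{wz\in E}|V_w||V_z|$. Thus the two-sided bound is equivalent to $\lceil D/2\rceil(n-\lceil D/2\rceil)\le M\le \lfloor n/2\rfloor\lceil n/2\rceil$. The lower bound on $b(T)$ is then immediate: any edge splits $T$ into parts of sizes $s$ and $n-s$, and $s(n-s)\le \lfloor n/2\rfloor\lceil n/2\rceil$, so $M\le \lfloor n/2\rfloor\lceil n/2\rceil$ and $b(T)\ge \frac{n}{2\lfloor n/2\rfloor\lceil n/2\rceil}$.

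For the upper bound on $b(T)$ I would exhibit one edge with a large product using the diameter. Fix a diametral path $v_0v_1\cdots v_D$ and take the edge $e=v_{i^*}v_{i^*+1}$ with $i^*=\lceil D/2\rceil-1$. Deleting $e$, the component containing $v_0,\dots,v_{i^*}$ has at least $i^*+1=\lceil D/2\rceil$ vertices, while the one containing $v_{i^*+1},\dots,v_D$ has at least $D-i^*=\lfloor D/2\rfloor+1\ge \lceil D/2\rceil$ vertices. Writing the two sizes as $s$ and $n-s$, both lie in $[\lceil D/2\rceil,\,n-\lceil D/2\rceil]$; since $s\mapsto s(n-s)$ is concave and takes the value $\lceil D/2\rceil(n-\lceil D/2\rceil)$ at both endpoints of this interval, we get $M\ge s(n-s)\ge \lceil D/2\rceil(n-\lceil D/2\rceil)$, i.e. $b(T)\le \frac{n}{2\lceil D/2\rceil(n-\lceil D/2\rceil)}$.

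It then remains to produce, for each diameter $D\ge 3$, trees attaining equality. For the upper bound I would take the path $v_0v_1\cdots v_D$ and attach all remaining $n-D-1$ vertices as pendants to $v_{\lfloor D/2\rfloor}$. A direct case analysis over the edges shows every edge splits off a smaller side of size at most $\lceil D/2\rceil$, with equality attained at the edge incident to $v_{\lfloor D/2\rfloor}$ nearest the centre, so that $M=\lceil D/2\rceil(n-\lceil D/2\rceil)$; one also checks the longest path is $v_0$ to $v_D$, so the diameter is exactly $D$ (here $D\ge 3$ is used). For the lower bound I would force a perfectly balanced centre edge: take an edge $uv$, grow a path of length $d_1=\lfloor(D-1)/2\rfloor$ from $u$ and a path of length $d_2=\lceil(D-1)/2\rceil$ from $v$, and distribute the remaining vertices as pendants on $u$ and $v$ so that the two sides have sizes $\lfloor n/2\rfloor$ and $\lceil n/2\rceil$. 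Since $d_1+d_2+1=D$ the diameter is $D$, the edge $uv$ realizes the balanced split, and $M=\lfloor n/2\rfloor\lceil n/2\rceil$; feasibility follows from $d_1+1\le\lfloor n/2\rfloor$ and $d_2+1\le\lceil n/2\rceil$, which hold because $D\le n-1$.

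The conceptual core is the diametral-path estimate $M\ge \lceil D/2\rceil(n-\lceil D/2\rceil)$, which becomes a one-line argument once phrased through the concavity of $s\mapsto s(n-s)$. I expect the main obstacle to be purely in the extremal constructions: verifying that the attached pendants create neither a longer path (keeping the diameter exactly $D$, which is where $D\ge 3$ enters) nor a more balanced edge (so that the designated edge is genuinely a centre edge), together with the floor/ceiling bookkeeping needed to confirm the size and depth requirements are simultaneously satisfiable.
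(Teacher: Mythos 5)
Your proposal is correct, and for the main inequality it takes a genuinely different and more direct route than the paper. The paper proves the upper bound by viewing an arbitrary tree of diameter $D$ as built from $P_{D+1}$ by successive pendant attachments and invoking Lemma~\ref{pendant-vertex-smaller-set-domination} to show the smaller side of a centre-edge split never shrinks below $\lceil D/2\rceil$; you instead rewrite $b(T)=\frac{n}{2M}$ with $M=\max_{wz\in E}|V_w||V_z|$ (legitimate, since for every edge $|V_w|+|V_z|=n$, so minimizing $\bigl||V_w|-|V_z|\bigr|$ is the same as maximizing the product, and Theorem~\ref{extended centre edge result} then gives $b(T)=\frac{n}{2M}$) and exhibit a single witness edge on a diametral path whose two sides both have size at least $\lceil D/2\rceil$, finishing by concavity of $s\mapsto s(n-s)$. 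This avoids both the structural ``build-up'' claim and the monotonicity lemma, and it yields the lower bound $b(T)\geq \frac{n}{2\lceil n/2\rceil\lfloor n/2\rfloor}$ for free from $M\leq\lfloor n/2\rfloor\lceil n/2\rceil$ (the paper gets this from Theorem~\ref{lower bound}). Your extremal constructions are essentially the paper's: pendants concentrated at $v_{\lfloor D/2\rfloor}$ on $P_{D+1}$ for the maximum, and a balanced distribution of pendants across the two endpoints of the centre edge for the minimum; your feasibility checks ($d_1\geq 1$ needing $D\geq 3$, and $d_i+1\leq$ the target side sizes via $D\leq n-1$) are exactly the bookkeeping the paper handles via Lemma~\ref{centre edge unchanged}. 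The one point worth writing out fully is the deferred ``direct case analysis'' that no edge of the maximizing tree gives a product exceeding $\lceil D/2\rceil(n-\lceil D/2\rceil)$; it does go through, since every path edge $v_iv_{i+1}$ splits off a side of size $\min(i+1,D-i)\leq\lceil D/2\rceil$ and pendant edges split off a singleton.
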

\begin{proof}
  Construct a tree $T$ which minimizes $b(T)$ as follows: Start with the path on $D+1$ vertices. If $D$ is odd, the has a unique centre edge $uv$. We attach pendant vertices alternately to 
 $u$ and $v$  until all vertices are exhausted. If $D$ is even, without loss of generality, assume that $\vert V_u \vert=\vert V_v \vert-1$. We first attach a pendant vertex to $u$, and then alternately to $v$ and $u$, until all vertices are exhausted. In both cases, by Lemma \ref{centre edge unchanged}, $uv$ still remains the centre edge. Moreover, when we remove the edge $uv$, we obtain two trees with almost equal sizes. 
   Thus, $b(T)=\frac{n}{2\lceil \frac{n}{2}\rceil \lfloor \frac{n}{2}\rfloor}$ . Therefore, the tree $T$ minimizes $b(T)$ among all trees in this case with diameter $d$.



Next, we construct a tree that maximizes $b(T)$. Begin with the path on ${D+1}$ vertices. Now, if $D$ is even, then there is a unique middle vertex of the path; we attach all remaining vertices as pendant vertices to this vertex. If $D$ is odd, the path has a unique centre edge, and we attach all remaining vertices as pendant vertices to exactly one endpoint of this edge. By Lemma \ref{centre edge unchanged}, the centre edge of $P_{D+1}$ remains a central edge throughout the construction. Consequently, in both cases, a sparsest cut is induced by $\lfloor \frac{D+1}{2} \rfloor$ vertices, i.e.,$\lceil \frac{D}{2} \rceil$ vertices. Therefore, we have $b(T)=\frac{n}{2\lceil \frac{D}{2} \rceil (n-\lceil \frac{D}{2} \rceil)}$.

  Note that any tree $T^\prime=(V^\prime, E^\prime)$ of diameter $D$ can be constructed by successively attaching pendant vertices to a path on $D+1$ vertices.  By Lemma \ref{pendant-vertex-smaller-set-domination}, whenever a pendant vertex is attached to a tree, the removal of a centre edge in the resulting tree produces a smaller component whose order is at least that of the smaller component obtained by removing a centre edge in the original tree. Therefore, at each step of the process, the resulting tree admits a sparsest cut induced by at least $\lceil \frac{D}{2} \rceil$ vertices, and this cut contains at most half of the vertices of the resulting tree. At the end of this process, we have a sparsest cut induced by at least $\lceil \frac{D}{2} \rceil$ vertices, and containing  at most $n/2$ vertices in $T^\prime$.
  
  Let $u^\prime v^\prime$ be a centre edge in $T^\prime$, and let $V^\prime_{u^\prime}$ induce a sparsest cut in $T^\prime $, with $\lceil \frac{D}{2} \rceil \leq \vert V^\prime_{u^\prime} \vert \leq \frac{n}{2}.$ Then $\vert V^\prime_{u^\prime} \vert \vert V^\prime_{v^\prime}\vert \geq \lceil \frac{D}{2} \rceil (n-\lceil \frac{D}{2} \rceil) $. Hence 
  $$
b(T^\prime)=\frac{n}{2}\frac{1}{\vert V^\prime_{u^\prime} \vert \vert V^\prime_{v^\prime}\vert}\leq \frac{n}{2\lceil \frac{D}{2} \rceil (n-\lceil \frac{D}{2} \rceil}.
  $$
\end{proof}

Let $T=(V,E)$ be a tree with at least two centre edges. By Theorem \ref{substar}, we know that centre edges form a star subgraph. We define the  star-root vertex of $T$ to be the centre vertex of this star. If $T$ has a unique centre edge, say $uv$, we call $u$(or $v$) a star-root vertex if the subtree $T_u$(or $T_v$) has size at least as large as that of $T_v$(or $T_u$). In particular,, if $\vert V_u \vert =\vert V_v \vert$,  both $u$ and $v$ are called the star-root vertices of $T$. The following lemma will be useful.

\begin{lemma}\label{centre vertex domination}
    Let $T=(V,E)$ be a tree and $u$ be a star-root vertex of  $T$. If $uv \in E$, then $\vert V_u \vert \geq \vert V_v\vert.$
\end{lemma}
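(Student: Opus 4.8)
The plan is to prove that if $u$ is a star-root vertex and $uv \in E$, then $\vert V_u \vert \geq \vert V_v\vert$, by splitting into the two cases encoded in the definition of star-root vertex: either $T$ has a unique centre edge, or $T$ has at least two centre edges.

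First I would handle the case where $T$ has a \emph{unique} centre edge. If $uv$ is itself that centre edge, the inequality $\vert V_u \vert \geq \vert V_v \vert$ is immediate from the definition of the star-root vertex (the star-root is chosen so that its subtree is at least as large). If instead $uv$ is a different edge, then the unique centre edge lies entirely in one of the two components $T_u$ or $T_v$. The key observation is that, because $u$ is the star-root, the larger of $T_u, T_v$ must be the one containing the centre edge; I would argue that the centre edge sits on the $u$-side. More carefully, since the centre edge is the unique minimizer of $\bigl|\,\vert V_w\vert - \vert V_z\vert\,\bigr|$ and $u$ was declared star-root precisely because $T_u$ is the larger subtree when $uv$ is the centre edge, I would invoke Lemma~\ref{centre-edge-domination}: the component containing the centre edge is the strictly larger one. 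This directly yields $\vert V_u \vert > \vert V_v \vert$ once I establish that $T_u$ is the side containing the centre edge.

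Next I would treat the case where $T$ has at least two centre edges, so the star-root vertex $u$ is the centre vertex of the substar formed by the centre edges (Theorem~\ref{substar}). Here every centre edge is incident to $u$. Given an arbitrary edge $uv$, I again want to show $T_u$ is the larger component. If $uv$ is one of the centre edges, then the centre vertex $u$ is the common vertex of all centre edges, and the other centre edges lie in $T_u$; applying Lemma~\ref{centre-edge-domination} with $wz = uv$ (noting $T_u$ contains the remaining centre edges) gives $\vert V_u \vert > \vert V_v \vert$. If $uv$ is not a centre edge, then all centre edges are incident to $u$ and hence lie in the component $T_u$ (removing $uv$ does not separate $u$ from its incident centre edges), so Lemma~\ref{centre-edge-domination} applies verbatim to give $\vert V_u \vert > \vert V_v \vert$.

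The main obstacle I anticipate is the bookkeeping in the unique-centre-edge case when $uv$ is \emph{not} the centre edge: I must verify that the star-root designation (largest subtree on removal of the centre edge) forces the centre edge to lie on the $u$-side of the cut induced by $uv$, rather than assuming it. The clean way to do this is to note that removing $uv$ yields components $T_u \ni u$ and $T_v \ni v$, and the unique centre edge lies in exactly one of them; if it lay in $T_v$, then by Lemma~\ref{centre-edge-domination} (applied with the roles reversed) we would get $\vert V_v \vert > \vert V_u \vert$, and tracing back through the centre-edge definition one checks this contradicts $u$ being the star-root vertex. I would therefore organize the argument around a single application of Lemma~\ref{centre-edge-domination} in each case, with the only real work being the verification that the centre structure always lands in $T_u$; the boundary subcase $\vert V_u \vert = \vert V_v \vert$ (where both endpoints are star-roots) is consistent since the claimed inequality is non-strict.
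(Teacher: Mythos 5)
Your proof is correct and follows essentially the same route as the paper: split on whether $T$ has one or several centre edges, locate a centre edge inside $T_u$, and apply Lemma~\ref{centre-edge-domination}. The one step you flag as \emph{the only real work} — verifying that the centre edge lands on the $u$-side when $T$ has a unique centre edge and $uv$ is not it — needs no contradiction argument and no ``tracing back'': by definition the star-root $u$ is an endpoint of the unique centre edge, so that edge is incident to $u$ and, upon removing any other edge $uv$, remains in the component $T_u$ containing $u$, which is exactly how the paper argues.
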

\begin{proof}
    Suppose $T$ has a unique centre edge. If $uv$ is the centre edge, then the result follows from the definition of star-root vertex. Now assume that $uv$ is not the centre edge. Then the subtree $T_v$ cannot contain the centre edge, since one endpoint of the centre edge—namely, the vertex $u$- does not lie in $T_v$. Consequently, the subtree $T_u$ contains the centre edge. By Lemma \ref{centre-edge-domination}, we have $\vert V_u \vert > \vert V_v \vert.$
    
    Now assume that $T$ has at least two centre edges. Since $T_v$ can not contain a centre edge, all but at most one centre edge should lie in $T_u$. In particular, the subtree $T_u$ contains at least one centre edge. Therefore, the result follows by Lemma \ref{centre-edge-domination}.
\end{proof}

In the theorem below, we discuss extremal trees when the maximum vertex degree is fixed.
\begin{theorem}\label{extremal-graph-max-vertex-degree}
    Let $T$ be a tree on $n$ vertices with maximum vertex degree $d_{\max}.$       
\begin{enumerate}
    \item If $n$ is even, then
    \[
        b(T) \geq 
        \begin{cases} 
            \frac{2}{n}, & d_{\max} \leq \frac{n}{2}, \\
            \frac{n}{2d_{\max}(n-d_{\max})}, & d_{\max} > \frac{n}{2}.
        \end{cases}
    \]
    \item If $n$ is odd, then
    \[
        b(T) \geq 
        \begin{cases} 
            \frac{2n}{n^2 - 1}, & d_{\max} \leq \lfloor \frac{n}{2} \rfloor + 1, \\
            \frac{n}{2d_{\max}(n-d_{\max})}, & d_{\max} > \lfloor \frac{n}{2} \rfloor + 1.
        \end{cases}
     \]
     \item If $d_{\max}$ divides $n-1$, then $b(T) \leq \frac{n}{2k(n-k)}$ where $n-1=kd_{\max}$ for some $k \in \mathbb{N}$.
     \item If $n-1=k d_{\max}+r$ for some $k,r \in \mathbb{N}$ with $1 \leq r <d_{\max}$, then $b(T) \leq \frac{n}{2(k+1)(n-k-1)}.$
\end{enumerate}
Moreover, all the bounds mentioned above are sharp.
 \end{theorem}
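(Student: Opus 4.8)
The plan is to reduce all four parts to controlling a single integer. By Theorem~\ref{extended centre edge result}, if $uv$ is a centre edge of $T$ and $K=\min\{|V_u|,|V_v|\}\le\lfloor n/2\rfloor$, then $b(T)=\frac{n}{2K(n-K)}$. Since $x\mapsto x(n-x)$ is strictly increasing on $\{1,\dots,\lfloor n/2\rfloor\}$, a lower bound on $b(T)$ is equivalent to an upper bound on $K$, and an upper bound on $b(T)$ is equivalent to a lower bound on $K$. Thus the whole theorem splits into two structural facts about the centre edge: (i) $K\le n-d_{\max}$ (yielding the lower bounds in parts 1--2), and (ii) $K\ge (n-1)/d_{\max}$ (yielding the upper bounds in parts 3--4), together with matching extremal constructions.

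For parts 1 and 2 the first step is the elementary observation that if $w$ is a vertex with $\deg(w)=d_{\max}$, then for any edge $e$ at most one neighbour of $w$ lies on the side of $T\setminus e$ not containing $w$ (in a tree the unique path from $w$ across $e$ uses exactly one edge at $w$). Hence $w$ together with at least $d_{\max}-1$ of its neighbours lies on one side, so the larger component of $T\setminus e$ has at least $d_{\max}$ vertices, giving $K\le n-d_{\max}$ for the centre edge. When $d_{\max}$ exceeds the stated threshold ($n/2$ for even $n$, $\lfloor n/2\rfloor+1$ for odd $n$) we have $n-d_{\max}\le\lfloor n/2\rfloor$, so monotonicity gives $K(n-K)\le d_{\max}(n-d_{\max})$ and hence $b(T)\ge \frac{n}{2d_{\max}(n-d_{\max})}$. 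When $d_{\max}$ is at or below the threshold the claimed bound is exactly the global minimum $b(P_n)$ of Theorem~\ref{b_cycle_path_star_compl}, which always holds because $K(n-K)\le\lceil n/2\rceil\lfloor n/2\rfloor$.

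For parts 3 and 4 the key step is the lower bound on $K$. Let $uv$ be a centre edge with $|V_u|=K\le |V_v|=n-K$, and let $w_1,\dots,w_t$ be the neighbours of $v$ inside $V_v$, so $t\le d_{\max}-1$ and the subtrees $T_{w_i}$ partition $V_v\setminus\{v\}$. Applying the centre-edge (minimal-imbalance) property to each edge $vw_i$ gives $\bigl| n-2|T_{w_i}| \bigr|\ge n-2K$, and since $|T_{w_i}|\le n-K-1<n-K$ the alternative $|T_{w_i}|\ge n-K$ is impossible, forcing $|T_{w_i}|\le K$. Summing, $n-K-1=\sum_i|T_{w_i}|\le (d_{\max}-1)K$, i.e. $K\ge (n-1)/d_{\max}$. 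In part~3, $d_{\max}\mid n-1$ gives $K\ge k$; in part~4, $1\le r<d_{\max}$ gives $K\ge k+1$; in both cases monotonicity converts this into the stated upper bound on $b(T)$.

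Finally, sharpness is handled by explicit spiders and brooms. For parts 3 and 4 I take the tree with a central vertex $v$ of degree $d_{\max}$ whose legs are paths: $d_{\max}$ legs on $k$ vertices when $d_{\max}\mid n-1$, and $r$ legs on $k+1$ vertices together with $d_{\max}-r$ legs on $k$ vertices otherwise; cutting a longest leg at $v$ is the most balanced cut and realizes $K$ equal to the lower bound just proved. For the sub-threshold cases of parts 1 and 2 I build a tree with a centre edge whose two sides have sizes $\lfloor n/2\rfloor$ and $\lceil n/2\rceil$, placing a vertex of degree $d_{\max}$ on the larger side via a broom (a star with a pendant path); the joining edge has the least possible imbalance ($0$ or $1$), so it is a centre edge and $b(T)=b(P_n)$. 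For the super-threshold cases I use the broom formed from a star $S_{d_{\max}}$ with a pendant path on $n-d_{\max}$ vertices attached to its centre, whose centre edge splits off exactly $n-d_{\max}$ vertices. I expect the main obstacle to be the bound $K\ge (n-1)/d_{\max}$: one must invoke the centre-edge inequality at $v$ correctly and eliminate the branch $|T_{w_i}|\ge n-K$, and then in each construction verify, using Lemma~\ref{centre edge unchanged} or a direct imbalance comparison, that the indicated edge is genuinely a centre edge and that the maximum degree is exactly $d_{\max}$.
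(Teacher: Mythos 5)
Your proof is correct, and for the upper-bound parts it takes a genuinely different route from the paper. The reduction of all four parts to the single integer $K=\min\{|V_u|,|V_v|\}$ via the two inequalities $K\le n-d_{\max}$ and $K\ge (n-1)/d_{\max}$ is sound; in particular your dichotomy for $\bigl|n-2|T_{w_i}|\bigr|\ge n-2K$ survives the boundary case $|V_u|=|V_v|$, since $|T_{w_i}|\le |V_v|-1$ already rules out the branch $|T_{w_i}|\ge n-K$. For parts 1--2 your key observation (deleting any edge leaves the maximum-degree vertex with at least $d_{\max}-1$ of its neighbours on its side, so the opposite component has at most $n-d_{\max}$ vertices) is exactly the paper's, except the paper deploys it inside a contradiction against the broom's value rather than as a universal bound on $K$. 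For parts 3--4 the paper instead introduces a ``star-root vertex'' $w'$, invokes its Lemma~\ref{centre vertex domination}, and argues by contradiction that every branch of $w'$ has at most $k-1$ vertices, then counts $n\le d_{\max}(k-1)+1<n$; you apply the centre-edge minimality directly to the edges $vw_i$ at the larger-side endpoint of the centre edge, force $|T_{w_i}|\le K$, and sum to get $K\ge (n-1)/d_{\max}$, which handles parts 3 and 4 in one stroke and avoids the star-root machinery altogether. The extremal constructions (brooms and spiders) coincide with the paper's. Your version buys a cleaner, quotable quantitative inequality on the centre-edge split that unifies the four cases; the paper's version buys tighter integration with its other structural lemmas and builds the extremal trees directly into the argument.
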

\begin{proof}
Construct a tree $T$ which minimizes $b(T)$ as follows:  Let $v_1$ be a vertex of degree $d_{\max}$, and attach a path on $n-d_{\max}-1$ vertices to one of the neighbors of $v_1$.

\noindent\textbf{Proof of (1): } Let $n=2l$ for some positive integer $l$.
If $d_{\max} \leq n/2$, then there is a sparsest cut induced by exactly ${n/2}$ vertices in $T$. That is, $b(T)=2/n$. So, $T$ minimizes $b(T)$ in this case.


If $d_{\max} > n/2$, then there is a sparsest cut induced by $n-d_{\max}$ number of vertices in $T$. That is, $b(T)=\frac{n}{2d_{\max}(n-d_{\max})}$.  We claim $T$ minimizes $b(T)$ in this case. Suppose that,  $T^\prime=(V^\prime,E^\prime)$ is a tree with $b(T^\prime) <b(T)$. Let $w^\prime \in V^\prime$ with $\deg(w^\prime)=d_{\max}$. Let $u^\prime v^\prime$ be a centre edge in $T^\prime$. From Theorem \ref{extended centre edge result}, we get
 $$
\frac{n}{2}\frac{1}{\vert V^\prime_{u^\prime}\vert \vert V^\prime_{v^\prime}\vert}=b(T^\prime)<b(T)=\frac{n}{2}\frac{1}{d_{\max}(n-d_{\max})}.
 $$
  Therefore, $\vert V^\prime_{u^\prime}\vert \vert V^\prime_{v^\prime}\vert > d_{\max}(n-d_{\max})$. Without loss of generality, let $\vert V^\prime_{u^\prime}\vert \leq \vert V^\prime_{v^\prime}\vert$. Then $n-d_{\max} < \vert V^\prime_{u^\prime}\vert \leq l <d_{\max}$ and $l \leq \vert V^\prime_{v^\prime}\vert <d_{\max}$. Note that there is exactly one edge between $T^\prime_{u^\prime}$ and $T^\prime_{v^\prime}$, so the subtree that containing $w$ should contain at least $d_{\max}-1$ neighbors of $w$. This implies that at least one of the subtrees should be of order at least $d_{\max}$. That is, either $\vert T^\prime_{u^{'}} \vert \geq d_{\max}$ or  $\vert T^\prime_{v^{'}} \vert \geq d_{\max}$, which is  a contradiction. Thus $T$ minimizes $b(T)$.



\noindent\textbf{Proof of (2): } Let $n=2l+1$ for some positive integer $l$.
 Let $d_{\max} \leq \lfloor \frac{n}{2} \rfloor +1$. Then there is a sparsest cut in $T$ induced by $l$ vertices in $T$ with $b(T)=\frac{2n}{n^2-1}$. Consequently, $T$ minimises $b(T)$. If $d_{\max} > \lfloor \frac{n}{2} \rfloor +1$, then there is a sparsest cut induced by $n-d_{\max}$ vertices in $T$ with $b(T)=\frac{n}{2d_{\max}(n-d_{\max})}$. The remainder of the proof follows along the same lines as in part (1).




\noindent\textbf{Proof of (3):} Let $d_{\max}$ divides $n-1$, and let $n-1 = k d_{\max}$. Construct the tree $T$ as follows: Let $\deg(v_1)=d_{\max}$. Attach to each pendant neighbor of $v_1$ a branch of size $k-1$, ensuring that the degree of every vertex in the resulting graph is at most $d_{\max}$. This procedure may produce more than one nonisomorphic tree. Fix any one of the trees obtained in this way, and denote it by $T$. 
 


In $T$, the $k$ vertices of any single branch induces a sparsest cut. Thus, $b(T)=\frac{n}{2k(n-k)}$. We now show that $T$ maximizes $b(T)$. Suppose, to the contrary, that there exists a tree  $T^\prime=(V^\prime,E^\prime)$ such that $b(T^\prime)>b(T)$. Let $w^\prime \in V^\prime$ be a star-root vertex of $T^\prime$, and let $w^\prime v^\prime$ be a centre edge in $T^\prime$. By Theorem \ref{extended centre edge result}
$$
 \frac{n}{2}\frac{1}{\vert V^\prime_{w^\prime}\vert \vert V^\prime_{v^\prime}\vert}=b(T^\prime)>b(T)=\frac{n}{2k(n-k)}.
 $$
Hence $\vert V^\prime_{w^\prime}\vert \vert V^\prime_{v^\prime}\vert < k(n-k)$. By Lemma \ref{centre vertex domination}, we have $\vert V^\prime_{w^\prime}\vert \geq \vert V^\prime_{v^\prime}\vert$. It follows that $$\vert V^\prime_{v^\prime}\vert \leq k-1~\mbox{~ and~} \vert V^\prime_{w^\prime} \vert \geq n-k+1.$$ Therefore, $$\vert V^\prime_{w^\prime} \vert - \vert V^\prime_{v^\prime}\vert \geq n-2k+2.$$
Now we claim that every branch of $w^\prime$ holds at most $k-1$ vertices. Suppose that, there exists a branch of $w^\prime$ that contains at least $k$ vertices. Let $z^\prime$ be the neighbor of $w^\prime$ in this branch. By Lemma \ref{centre vertex domination}, we have $\vert V^\prime_{w^\prime}\vert \geq \vert V^\prime_{z^\prime} \vert$. Also  $\vert V^\prime_{w^\prime}\vert \leq n-k$ and $\vert V^\prime_{z^\prime}\vert \geq k$. Therefore $$\vert V^\prime_{w^\prime}\vert-\vert V^\prime_{z^\prime}\vert \leq n-2k <n-2k+2 \leq \vert V^\prime_{w^\prime} \vert - \vert V^\prime_{v^\prime}\vert,$$
which contradicts the assumption that $w^\prime v^\prime$ is a centre edge of $T^\prime$.
Since $deg(w^\prime) \leq d_{\max}$, the number of vertices in $T^\prime$ is at most $d_{\max}(k-1)+1 <d_{\max}k+1=n$, which is a contradiction. Therefore $T$ maximizes $b(T)$.

\noindent\textbf{Proof of (4):} Let $n-1=d_{\max}k+r$, where $k$ and $r$ are non-negative integers with $1 \leq r <d_{\max}$.
In this case, we use a construction for $T$ similar to that in the previous case. In addition, the remaining $r$ vertices are distributed among distinct branches of $v_1$, one vertex per branch. As a result, exactly $r$ branches containing $k+1$ vertices each, while the remaining branches contain exactly $k$ vertices each.


 The set of vertices in any branch of $T$ containing $k+1$ vertices induces a sparsest cut in $T$.  Hence, $b(T)=\frac{n}{2(k+1)(n-k-1)}$. Suppose, for contradiction, that there exists a tree $T^\prime=(V^\prime,E^\prime)$ such that $b(T^\prime) >\frac{n}{2(k+1)(n-k-1)}$. Let $w^\prime \in V^\prime$ be a star-root vertex of $T^\prime.$ Arguing as in the previous case, we conclude that every branch of $w^\prime$ has at most $k$ vertices. Hence, $T^\prime$ has at most $kd_{\max}+1<kd_{\max}+1+r=n$ vertices, which is a contradiction. Thus, $T$ maximizes $b(T)$.
\end{proof}

The next theorem concerns extremizers for trees of fixed number of pendant vertices.

\begin{theorem}\label{pendant-vertex-extremizer}
   Let $T$ be a tree on $n$ vertices with $p$ pendant vertices. Then, we have the following:  
    \begin{enumerate}
  \item $b(T) \geq \frac{n}{2\lceil \frac{n}{2}\rceil \lfloor \frac{n}{2}\rfloor}$.
      \item If $p$ divides $(n-1)$,  write $n-1=kp$, then $b(T) \leq \frac{n}{2k(n-k)}$.
       \item If $n-1=kp+r$ for some $k,r \in \mathbb{N}$ with $1 \leq r < p$, then $b(T) \leq \frac{n}{2(k+1)(n-k-1)}$.
   
   \end{enumerate}Furthermore, each of the above bounds is attained, and hence all the bounds are sharp.
\end{theorem}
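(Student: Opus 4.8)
The plan is to reduce every statement to the centre-edge formula $b(T)=\frac{n}{2\,|V_u||V_v|}$ of Theorem \ref{extended centre edge result}, so that bounding $b(T)$ becomes the purely combinatorial problem of controlling the size of the smaller side of a centre edge of a tree with exactly $p$ leaves. For part (1) the lower bound is immediate: by Theorem \ref{lower bound} (equivalently Theorem \ref{extremal trees}) every tree satisfies $b(T)\geq b(P_n)$, and $b(P_n)=\frac{n}{2\lceil n/2\rceil\lfloor n/2\rfloor}$ by Theorem \ref{b_cycle_path_star_compl}. For sharpness I would exhibit, for the given $p$, a tree with exactly $p$ pendant vertices whose centre edge splits $V$ into parts of sizes $\lceil n/2\rceil$ and $\lfloor n/2\rfloor$; then $|V_u||V_v|$ is maximal and $b(T)=b(P_n)$. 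A convenient explicit family is the balanced double broom: a path (the handle) carrying $\lceil p/2\rceil$ pendants at one end and $\lfloor p/2\rfloor$ at the other, with the handle length $n-p$ and the pendant split tuned so that one handle edge induces the balanced cut. Lemma \ref{centre edge unchanged} (or the remark following Theorem \ref{lower bound}) certifies that this balanced edge is a centre edge, while the pendant edges give strictly more unbalanced cuts, so $b$ attains $b(P_n)$ and the number of leaves is exactly $p$.

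For the upper bounds in parts (2) and (3) the construction is a spider: a centre vertex with $p$ legs, each leg a path, so that the tree has exactly $p$ leaves. In part (2), where $n-1=pk$, all legs have $k$ vertices; the most balanced cut removes the edge from the centre to one leg, giving smaller side $k$, so $b(T)=\frac{n}{2k(n-k)}$. In part (3), where $n-1=pk+r$ with $1\le r<p$, I take $r$ legs of length $k+1$ and $p-r$ legs of length $k$; now the balanced cut detaches a longest leg, the smaller side has $k+1$ vertices, and $b(T)=\frac{n}{2(k+1)(n-k-1)}$. In each case one checks (exactly as for the spider cut above) that detaching a longest leg is the most balanced cut, hence a centre edge, which pins down the claimed value.

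The optimality of these spiders is then proved by contradiction, following the scheme of Theorem \ref{extremal-graph-max-vertex-degree}(3),(4) verbatim, with the role of $d_{\max}$ played by $p$. Assume a tree $T'$ with $p$ leaves has a strictly larger $b$-value, let $w'v'$ be a centre edge with $w'$ a star-root vertex, so that $|V'_{w'}|\geq|V'_{v'}|$ by Lemma \ref{centre vertex domination}. The product inequality forces the smaller side $|V'_{v'}|$ to be at most $k-1$ in part (2) and at most $k$ in part (3); the centre-edge inequality together with Lemma \ref{centre vertex domination} then rules out any branch at $w'$ of size $k$ (resp.\ $k+1$), so every branch has at most $k-1$ (resp.\ $k$) vertices. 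The one new ingredient replacing ``$\deg\le d_{\max}$'' is the observation that the number of branches at $w'$ is at most $p$: each component of $T'-w'$ is a subtree containing at least one leaf of $T'$, and these leaves are distinct across components, so $\deg(w')\le p$. Combining the two facts yields $n\le 1+p(k-1)=n-p<n$ in part (2) and $n\le 1+pk<n$ in part (3), the desired contradiction.

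I expect the main obstacle to be the bookkeeping in this optimality argument: converting the product inequality $|V'_{w'}||V'_{v'}|<k(n-k)$ (resp.\ $<(k+1)(n-k-1)$) into a clean bound on the smaller side using monotonicity of $x\mapsto x(n-x)$ on $[0,n/2]$, and then using the centre-edge inequality to exclude oversized branches without sign errors. The leaf-counting fact $\deg(w')\le p$ is the conceptually decisive step, but it is short once stated; the sharpness constructions are routine once their centre edges are identified, with the only mild care needed being to confirm that the balanced split $\lceil n/2\rceil,\lfloor n/2\rfloor$ in part (1) is actually realizable for the prescribed $p$.
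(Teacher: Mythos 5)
Your proposal is correct and follows essentially the same route as the paper: part (1) reduces to the global path minimizer with a broom-type sharpness construction, and parts (2) and (3) use the same spider trees together with the star-root/centre-edge contradiction argument borrowed from Theorem \ref{extremal-graph-max-vertex-degree}, with the key substitution $\deg(w')\le p$ (each branch of $T'-w'$ contains a distinct leaf) in place of the maximum-degree bound. The only cosmetic difference is that in part (1) you attach the extra pendants at the ends of the handle rather than at the endpoints of the centre edge as the paper does; both placements yield a balanced centre edge.
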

\begin{proof}

\textbf{Proof of (1):} Construct a tree $T$ as follows:  Start with a path on  $n-p+2$ vertices, and  let $uv$ be a centre edge of the path. If $n-p+2$ is even,  then $uv$ is the only centre edge. In this case,  attach the remaining $p-2$ pendant vertices with the vertices $u$ and $v$, alternately, starting from either $u$ or $v$. If $n-p+2$ is odd, then, without loss of generality, assume $\vert V_u \vert = \vert V_v\vert -1.$ In this case attach the pendant vertices alternately to $u$ and $v$, starting from $u$, until all the remaining vertices are exhausted. In both cases, by Lemma \ref{centre edge unchanged}, the edge $uv$  remains  a centre edge of $T$. Moreover deleting the edge $uv$ gives two subtrees whose orders differ by at most one. Consequently, $b(T) = \frac{n}{2\lceil \frac{n}{2}\rceil \lfloor \frac{n}{2}\rfloor}$. Thus $T$ minimizes $b(T)$.



\noindent\textbf{Proof of (2):} Assume that $p$ divides $n-1$, and write $n-1 = kp$. Consider the star graph $S_{p+1}$ on $p+1$ vertices, attach to each pendant vertex of $S_{p+1}$ a path on $k-1$ vertices. Denote the resulting tree by $T$. The vertices in any of the single branch of $T$ induces a sparsest cut, and hence $b(T)=\frac{n}{2k(n-k)}$. We claim that $T$ maximizes $b(T)$ in this case. Suppose that, there exists a tree $T^\prime=(V^\prime,E^\prime)$ such that $b(T^\prime)>b(T)$. Let $w^\prime \in V^\prime$ be a star-root vertex of $T^\prime.$ Since $T^\prime$ has $p$ pendant vertices, $w^\prime$ has at most $p$ branches; in particular,  $\deg(w^\prime)\leq p$. The remainder of the argument is analogous to that of part (3) of Theorem \ref{extremal-graph-max-vertex-degree}. 
\noindent\textbf{Proof of (3):} Let $n-1=kp+r$ where $k\in \mathbb{N}$ and $r$ is a non-negative integer with $1 \leq r < p$. Start with the star $S_{p+1}$, and attach a path on $k$ vertices to each of the $r$ branches and attach a path on $k-1$ vertices to the remaining of $p-r$ branches.  In the resulting tree, denoted by $T$, wthe vertices of any branch containing $k+1$ vertices induce a sparsest cut. Therefore, $b(T)=\frac{n}{2(k+1)(n-k-1)}$. The argument showing that $T$ maximizes $b(T)$ is analogous to the proof of part (4) of Theorem \ref{extremal-graph-max-vertex-degree}.




\end{proof}

\section{Connection with Laplacian matrices}
In this short section, we establish a connection between the quantity $b(G)$ and the Laplacian matrix of the graph $G$. Note that, if $S$ induces a sparsest cut, the associated $l_1$-\textit{Fiedler vector} \cite{enide-geir-2024} is given by $$ x_v =
\left\{
	\begin{array}{ll}
		\frac{1}{2|S|}  & \mbox{if } v \in S, \\
	 -\frac{1}{2|S^c|} & \mbox{if } v \in S^c.
	\end{array}
\right.
$$
\begin{theorem}\label{lap-conn}
Let $G$ be a graph, and let $S \subseteq V(G)$ induce a sparsest cut in $G$. Let $x$ be the associated $l_1$-Fiedler vector. Then we have the following:
\begin{itemize}
    \item[(i)] $\sum\limits_{u \in S}(L(G)x)_u=b(G),$
    \item[(ii)] $\sum\limits_{u \in S^c}(L(G)x)_u=-b(G)$,
\end{itemize}    
where $L(G)$ is the Laplacian matrix of $G$.
\end{theorem}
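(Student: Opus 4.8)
The plan is to work directly from the row description of the Laplacian action. For each vertex $u$, the identity $(L(G)x)_u = \deg(u)x_u - \sum_{w \sim u} x_w = \sum_{w \sim u}(x_u - x_w)$ holds, so each entry of $L(G)x$ is an edge-difference sum taken around $u$. I would substitute the explicit $\ell_1$-Fiedler vector and exploit the fact that it is constant on $S$ and constant on $S^c$, so that every edge-difference term vanishes except those arising from cut edges.

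For part (i), summing the identity over $u \in S$ gives $\sum_{u \in S}(L(G)x)_u = \sum_{u\in S}\sum_{w \sim u}(x_u - x_w)$. I would split each inner sum according to whether $w \in S$ or $w \in S^c$: when $w \in S$ the difference $x_u - x_w$ is zero because $x$ is constant on $S$, while when $w \in S^c$ it equals $\frac{1}{2|S|} + \frac{1}{2|S^c|}$. As $u$ ranges over $S$ and $w$ over its neighbours in $S^c$, each crossing edge of $\partial S$ is counted exactly once (through its unique endpoint in $S$), so the sum reduces to $|\partial S|\bigl(\frac{1}{2|S|} + \frac{1}{2|S^c|}\bigr)$. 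Using $|S| + |S^c| = n$ this equals $\frac{n}{2}\cdot\frac{|\partial S|}{|S||S^c|} = \frac{n}{2}\rho(S)$, which is precisely $b(G)$ by Theorem \ref{main theorem}, since $S$ induces a sparsest cut.

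For part (ii), the quickest route is to observe that the all-ones vector lies in the (left) kernel of the symmetric matrix $L(G)$, so that $\sum_{u \in V(G)}(L(G)x)_u = \mathbf{1}^{\top}L(G)x = 0$. Hence $\sum_{u\in S^c}(L(G)x)_u = -\sum_{u \in S}(L(G)x)_u = -b(G)$. Alternatively, one may repeat the computation of (i) symmetrically over $S^c$, where now each cut edge contributes $-\bigl(\frac{1}{2|S|}+\frac{1}{2|S^c|}\bigr)$, yielding the same value with the opposite sign.

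This argument involves no genuine obstacle; it is essentially a bookkeeping exercise. The only points requiring care are the edge-counting step — verifying that summing over $u \in S$ counts each cut edge exactly once while no intra-$S$ edge contributes — and the correct invocation of Theorem \ref{main theorem} to identify the resulting constant $\frac{n}{2}\rho(S)$ with $b(G)$, which relies on $S$ being a sparsest cut rather than an arbitrary subset of $V(G)$.
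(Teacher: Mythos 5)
Your proof is correct and follows essentially the same route as the paper: substitute the explicit $\ell_1$-Fiedler vector into the row formula for $L(G)x$, observe that only cut edges contribute, and identify the resulting $\frac{n}{2}\rho(S)$ with $b(G)$ via Theorem~\ref{main theorem}. Your use of $\mathbf{1}^{\top}L(G)=0$ to dispatch part (ii) is a small, valid shortcut where the paper instead repeats the per-vertex computation over $S^c$.
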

\begin{proof}

Let $l_{ij}$ denotes the $ij$-th entry of $L(G)$. For a vertex $u\in S$, let $\vert N_S(u) \vert$ and $\vert N_{S^c}(u) \vert$ represent the number of neighbors of $u$ in $S$ and $S^c$, respectively. Therefore, 
\begin{align*}
    (L(G)x)_u &=\sum_{w \in V(G)} l_{uw}x_{w}\\
    &=l_{uu}x_u+\sum_{w \neq u} l_{uw}x_{w}\\
    &=\frac{\deg(u)}{2\vert S \vert }-\Bigl(\sum_{\substack{w \sim u \\ w\in S}} x_w +\sum_{\substack{w \sim u \\ w\in S^c}} x_w\Bigl)\\ 
    & =\frac{\deg(u)}{2\vert S \vert}-\Bigl(\frac{\vert N_S(u) \vert}{2\vert S \vert }-\frac{\vert N_{S^c}(u) \vert}{2\vert S^c \vert}\Bigl)\\
    &=\frac{\vert N_{S^c}(u) \vert}{2\vert S \vert }+\frac{\vert N_{S^c}(u) \vert}{2\vert S^c \vert}\\
    &= \vert N_{S^c}(u) \vert \frac{n}{2\vert S\vert \vert S^c \vert}.
\end{align*}
By the symmetry, we  get 
$
b(G)= \frac{n}{2} \frac{\vert \partial S^c\vert}{\vert S\vert \vert S^c \vert}
$.
So we have 
$$
(L(G)x)_u=b(G) \frac{\vert N_{S^c}(u) \vert}{\vert \partial S^c \vert}.
$$
Similarly, if $u \in S^c$, we have
$$
(L(G)x)_u=-b(G) \frac{\vert N_{S}(u)\vert}{\vert \partial S \vert}.
$$
By summing the above expression over all vertices of $S$ and  $S^c$, respectively, we get the result.
\end{proof}

\section{Relationship with Other Graph Parameters}

In this section, we establish connections between the quantity $b(G)$ and two fundamental graph parameters: the edge connectivity and the isoperimetric number of the graph.
\subsection{Edge-connectivity}
     The edge-connectivity of a graph $G$ is the minimum number of edges whose removal disconnects $G$. In the following result, we establish an upper bound for $b(G)$ in terms of edge connectivity.
\begin{theorem}\label{edge-connc-b(g)} Let $G$ be a connected graph on $n$ vertices with edge-connectivity $k$. Then
$$
b(G) \leq \frac{nk}{2(n-1)}.
$$
\end{theorem}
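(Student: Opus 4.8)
The plan is to exhibit one admissible cut whose sparsity is small enough and then invoke the variational formula of Theorem~\ref{main theorem}. Recall the standard equivalent description of edge-connectivity as a minimum over vertex bipartitions, namely $k = \min_{\emptyset \neq S \subsetneq V(G)} |\partial S|$; thus there is a nonempty proper subset $S \subseteq V(G)$ with $|\partial S| = k$. The decisive point is that such an $S$ automatically has both $G[S]$ and $G[S^c]$ connected, which is precisely the admissibility condition required in Theorem~\ref{main theorem}.

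First I would verify this connectivity claim. Suppose $G[S]$ were disconnected, say $S = S_1 \sqcup S_2$ with $S_1, S_2$ nonempty and no edge of $G$ joining them. Then the cut $\partial S$ splits as the disjoint union $\partial S_1 \cup \partial S_2$, so $|\partial S| = |\partial S_1| + |\partial S_2|$. But $S_1$ and $S_2$ are each nonempty proper subsets of $V(G)$, so $|\partial S_1| \geq k$ and $|\partial S_2| \geq k$ by the definition of edge-connectivity, forcing $|\partial S| \geq 2k$, which contradicts $|\partial S| = k \geq 1$. Hence $G[S]$ is connected, and the same argument applied to $S^c$ shows $G[S^c]$ is connected.

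With such an $S$ in hand, the estimate is immediate. Since $S$ is admissible, Theorem~\ref{main theorem} gives $b(G) \leq \frac{n}{2}\rho(S) = \frac{n}{2}\cdot\frac{|\partial S|}{|S||S^c|} = \frac{nk}{2|S||S^c|}$. As $|S|$ and $|S^c|$ are positive integers summing to $n$, their product is minimized at the endpoints, so $|S||S^c| \geq n-1$. Substituting yields $b(G) \leq \frac{nk}{2(n-1)}$, as claimed.

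The main obstacle is the connectivity reduction in the second paragraph: Theorem~\ref{main theorem} only admits cuts into two connected pieces, so an arbitrary minimum edge cut cannot be fed into the formula directly, and one must argue that a minimizing cut already has both sides connected. Once that is secured, the remaining inequalities are elementary. It is worth noting that $|S||S^c| \geq n-1$ is sharp exactly when one side is a single vertex, which suggests that equality in the theorem is governed by graphs admitting a minimum edge cut that isolates a single vertex; this is presumably the content of the characterization in Theorem~\ref{edge-connc-b(g)-equi-chara}.
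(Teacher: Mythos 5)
Your proof is correct, and it takes a genuinely different route from the paper's. You pick a minimum edge cut $\partial S$ with $|\partial S|=k$, prove the standard fact that both $G[S]$ and $G[S^c]$ must be connected (otherwise splitting a disconnected side as $S_1\sqcup S_2$ gives $|\partial S|=|\partial S_1|+|\partial S_2|\geq 2k$, a contradiction since $k\geq 1$), and then feed $S$ directly into the sparsest-cut formula of Theorem~\ref{main theorem}, finishing with $|S||S^c|\geq n-1$. The connectivity verification is exactly the point that makes this direct approach legitimate, and you handle it correctly. The paper instead argues by perturbation: it first shows that adding a single edge to any graph increases $b$ by at most $\frac{n}{2(n-1)}$ (using that a sparsest-cut set of $G$ remains admissible after edge addition), then removes a minimum cut to obtain a disconnected graph $G_{-k}$ with $b(G_{-k})=0$ and adds the $k$ edges back one at a time. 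Your argument is shorter and more self-contained for the inequality itself; the paper's stepwise construction is what gets reused in Theorem~\ref{edge-connc-b(g)-equi-chara}, where equality is characterized by forcing equality at every edge-addition step. Your closing observation that equality requires $|S||S^c|=n-1$, i.e.\ a minimum cut isolating a single vertex, is consistent with that characterization.
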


\begin{proof}
    If $G$ is the complete graph $K_n$, then the result is immediate. Let $G$ be a graph on $n$ vertices, other than $K_n$. Let $S\subseteq V(G)$ be such that $b(G)=\frac{n}{2} \rho(S)$. Add an edge between two non-adjacent vertices of $G$, and let $G_1$ be the resultant graph. Then, 
    \begin{align*}
        b(G_1) &\leq \frac{n}{2} \frac{\vert \partial S_{G_1} \vert}{\vert S\vert(n-\vert S\vert)}\\
        &\leq \frac{n}{2} \frac{\vert \partial S_G \vert+1}{|S|(n-\vert S \vert)}\\
        &=b(G)+\frac{n}{2\vert S \vert (n-\vert S\vert )}\\
        &\leq b(G) + \frac{n}{2(n-1)}.
    \end{align*}
    So if we add $k$ edges to the graph $G$, then for the resulting graph $G_k$, we have $b(G_k) \leq b(G) + \frac{nk}{2(n-1)}$.
    
    Let $G$ be a graph with edge-connectivity $k$. Then there exists a set of $k$ edges such that the graph obtained from $G$ by removing these edges, say $G_{-k}$, is disconnected. As $G_{-k}$ is disconnected,  $b(G_{-k}) = 0$.  Thus,

    $$
b(G) \leq b(G_{-k}) +\frac{nk}{2(n-1)}=\frac{nk}{2(n-1)}.
    $$
    
\end{proof}
Note that the above bound is tight. Among trees, equality holds only for star graphs (see Theorem \ref{extremal trees}). The bound also holds with equality for the complete  graph $K_n$. In the next theorem, we characterize the structure of all the graphs attaining this bound.
\begin{theorem}\label{edge-connc-b(g)-equi-chara}
   Let $G$ be a graph with edge-connectivity $k$, and define $G_0=G$. Then there exists a set of $k$ edges such that by removing these  $k$ edges one after another, the resultant graph $G_{-k}$ is disconnected. Let $G_{-s}$ denote the graph obtained by deleting $s$ edges from the graph $G$ in this process, for $s \in \{1,2,\dots,k\}$. Then  \begin{equation*}
b(G) = \frac{nk}{2(n-1)}
\end{equation*} if and only if 
\begin{enumerate}
    \item[(a)] there exists an isolated vertex $v$ in $G_{-k}$ such that adding all the $k$ edges to $v$ gives the graph $G$, and 
    \item[(b)] in every step $\{v\}$ induces a sparsest cut in each of the graph $G_{-k+i}$ for $i \in \{1,2,\dots,k\}.$
\end{enumerate}
\end{theorem}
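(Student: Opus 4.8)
The plan is to reduce both implications to the behaviour of the singleton cut $\{v\}$ together with the telescoping inequality that underlies Theorem~\ref{edge-connc-b(g)}.

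For the ``if'' direction, assume (a) and (b). Condition (a) says that the $k$ deleted edges are exactly the edges incident to $v$ and that $v$ is isolated in $G_{-k}$, so $\deg_G(v)=k$. Taking $i=k$ in (b), the set $\{v\}$ induces a sparsest cut of $G_{-k+k}=G$, and $\rho_G(\{v\})=\frac{\deg_G(v)}{1\cdot(n-1)}=\frac{k}{n-1}$; hence by Theorem~\ref{main theorem}, $b(G)=\frac{n}{2}\rho_G(\{v\})=\frac{nk}{2(n-1)}$. Note that only the top instance $i=k$ of (b) is actually needed for this direction.

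For the converse, suppose $b(G)=\frac{nk}{2(n-1)}$. First I would fix the process carefully: take the minimum edge cut to be $\partial S$ with both $G[S]$ and $G[S^{c}]$ connected, which is always possible for a minimum cut, so that $G_{-k}$ has exactly two components $C_1,C_2$, and delete the $k$ edges in any order. The single-edge estimate established inside the proof of Theorem~\ref{edge-connc-b(g)} gives $b(G_{-s})\le b(G_{-(s+1)})+\frac{n}{2(n-1)}$ for every $s$, so telescoping together with $b(G_{-k})=0$ recovers the bound $b(G)\le \frac{nk}{2(n-1)}$. Since equality is assumed, equality must hold at each stage, i.e. $b(G_{-s})=\frac{n(k-s)}{2(n-1)}$ for all $s$. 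The decisive step is the first one: using $C_1$ as a test set for $G_{-(k-1)}=G_{-k}+e_k$ gives $b(G_{-(k-1)})\le \frac{n}{2|C_1||C_2|}\le\frac{n}{2(n-1)}$, and since the left-hand side equals $\frac{n}{2(n-1)}$ we are forced to have $|C_1||C_2|=n-1$. Thus one component, say $C_2=\{v\}$, is a single vertex, which is therefore isolated in $G_{-k}$; as all edges at $v$ then lie in the cut we get $\deg_G(v)\le k$, while $\deg_G(v)\ge d_{\min}(G)\ge k$ because the edge-connectivity of a graph never exceeds its minimum degree, so $\deg_G(v)=k$ and every cut edge is incident to $v$. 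This is exactly (a).

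It remains to verify (b). Because every deleted edge is incident to $v$, the subgraph induced on $V\setminus\{v\}$ is identical in $G$, in each $G_{-s}$, and in $G_{-k}$, and hence is connected throughout, so $\{v\}$ is always an admissible cut with $\rho_{G_{-s}}(\{v\})=\frac{\deg_{G_{-s}}(v)}{n-1}=\frac{k-s}{n-1}$. Comparing with the stagewise equalities $b(G_{-s})=\frac{n(k-s)}{2(n-1)}=\frac{n}{2}\rho_{G_{-s}}(\{v\})$ shows that $\{v\}$ induces a sparsest cut of each $G_{-s}$, which is (b). I expect the only genuine obstacle to be the first-step analysis at the disconnected graph: establishing that exactly one component is a singleton and then combining this with $k\le d_{\min}(G)$ to pin down $\deg_G(v)=k$ and the incidence of all cut edges at $v$; once this is in place, the remaining claims are bookkeeping with the telescoped equalities.
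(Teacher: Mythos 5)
Your proof is correct and follows essentially the same route as the paper's: forcing equality in each step of the telescoped single-edge bound from Theorem~\ref{edge-connc-b(g)}, extracting the singleton component from $|C_1||C_2|=n-1$, and using $k\le d_{\min}(G)\le \deg_G(v)$ to conclude that every cut edge is incident to $v$. If anything, your direct verification of (b) for every $i$ (via $\deg_{G_{-s}}(v)=k-s$ compared against the stagewise equalities) is tidier and more complete than the paper's argument, which carries out the equality analysis explicitly only for the first added edge.
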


\begin{proof}
Let $v$ be such an isolated vertex in $G_{-k}$. We add $k$ edges and then obtain $G$. In the last step, also $\{v\}$ induced a sparsest cut.
Therefore $b(G)=\frac{nk}{2(n-1)}$. Thus, we establish the sufficiency part.\\
To prove the necessary condition, in Theorem \ref{edge-connc-b(g)} we need the equalities to be held at every step as we add the edges to $G_{-k}$ in   reverse order. Let $S$ induce a sparsest cut in $G_{-k}$. Consider the first step, if $b(G_{-k+1})=\frac{n}{2(n-1)}$, by Theorem \ref{edge-connc-b(g)}, we must have
\begin{equation}\label{first}
    b(G_{-k+1})= \frac{n}{2} \frac{\vert\partial S_{G_{-k+1}}\vert}{\vert S\vert(n-\vert S \vert)},
\end{equation}
\begin{equation}\label{middle}
    \frac{n}{2} \frac{\vert\partial S_{G_{-k+1}}\vert}{\vert S\vert(n-\vert S \vert)} = \frac{n}{2} \frac{\vert \partial S_{G_{-k} }\vert+1}{\vert S \vert (n-\vert S \vert)},
\end{equation}
and 
\begin{equation}\label{last}
   \frac{n}{2\vert S \vert (n-\vert S\vert )} =  \frac{n}{2(n-1)}.
\end{equation}
 From (\ref{last}), we have $S$ must be a set with single vertex in $G_{-k}$, and it is an isolated vertex in $G_{-k}$. As equality holds in  (\ref{middle}), we add an edge to $v$ to get $G_{-k+1}$. From Equation (\ref{first}), the vertex $v$ induces a sparsest cut in $G_{-k+1}$.
 
Now it is easy to see that the very next edge we should add to $\{v\}$. If not, let the edge not contain $v$ as an endpoint, then after removing $k-2$ edges, it leads to $G_{-k+2}$, and removing the very first edge that we have added to $v$, makes the graph $G$ disconnected. So, $G$ has edge connectivity strictly less than $k$. Thus the result follows.
\end{proof}






A natural question that arises at this stage is how the parameter $b(G)$ changes when new vertices are added to the graph. Next, we show that adding pendant vertices strictly decreases the quantity $b(G)$.

\begin{theorem}\label{vertex-add}
 Let $G$ be a connected graph on $n$ vertices. Let $G^k$ be a graph obtained by adding $k$ pendant vertex to the graph $G$. Then we have 
 $$
b(G^k) \leq b(G) \prod_{i=0}^{k-1} \Bigl(1-\frac{1}{(n+i)^2}\Bigl).
 $$

\end{theorem}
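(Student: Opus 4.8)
The plan is to reduce the statement to the single-vertex case and then iterate. Write $G = G^{0}, G^{1}, \dots, G^{k}$, where each $G^{i+1}$ is obtained from $G^{i}$ by attaching one pendant vertex; since $G$ is connected and attaching a pendant vertex preserves connectivity, every $G^{i}$ is connected, so Theorem~\ref{main theorem} is available at each stage. The heart of the argument is the following single-vertex claim: if $H$ is a connected graph on $m$ vertices and $H'$ is obtained by attaching a pendant vertex $w$ to some $u \in V(H)$, then $b(H') \le b(H)\bigl(1 - \tfrac{1}{m^{2}}\bigr)$. Granting this claim, applying it to the step $G^{i} \to G^{i+1}$ with $m = n+i$ for $i = 0, 1, \dots, k-1$ and multiplying the resulting inequalities telescopes exactly to the stated bound.

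To prove the claim I would take a subset $S \subseteq V(H)$ inducing a sparsest cut of $H$, so that by Theorem~\ref{main theorem} both $S$ and $S^{c}$ induce connected subgraphs and $b(H) = \tfrac{m}{2}\cdot\tfrac{|\partial S|}{|S|\,|S^{c}|}$. Relabelling $S$ and $S^{c}$ if necessary, assume $u \in S$, and set $S' = S \cup \{w\}$. Then $S'$ is connected, since $w$ is pendant at $u \in S$, and $(S')^{c} = S^{c}$ is connected, so $S'$ is an admissible competitor in the minimization defining $b(H')$. The only edge of $H'$ not present in $H$ is $uw$, and it lies inside $S'$; hence $|\partial_{H'} S'| = |\partial_{H} S|$, while $|S'| = |S| + 1$ and $|(S')^{c}| = |S^{c}|$. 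Consequently $b(H') \le \tfrac{m+1}{2}\cdot\tfrac{|\partial S|}{(|S|+1)\,|S^{c}|}$, and dividing by the displayed expression for $b(H)$ yields $b(H') \le b(H)\cdot \tfrac{(m+1)\,|S|}{m\,(|S|+1)}$.

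It then remains to bound the ratio $\tfrac{(m+1)\,|S|}{m\,(|S|+1)}$. Since $t \mapsto t/(t+1)$ is increasing and $|S| \le m-1$ (because $S \ne V(H)$), this ratio is at most $\tfrac{(m+1)(m-1)}{m^{2}} = 1 - \tfrac{1}{m^{2}}$, which establishes the claim. Note in particular that we make no attempt to control $|S|$: an upper bound on $b(H')$ follows from exhibiting a single competitor cut, and the extremal value $|S| = m-1$ supplies the worst case, so the estimate is insensitive to the attachment points and therefore holds for any $G^{k}$ obtained in this way.

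The only genuinely delicate points are verifying that the competitor $S'$ keeps both parts connected and checking that the constant emerges from the extreme case. The connectivity requirement actually removes all freedom: because $w$'s sole neighbour is $u$, placing $w$ opposite to $u$ would isolate it, so $w$ must join the side of $u$, which is precisely the side whose size we increase. This is the step I expect to require the most care, both to justify that $|\partial_{H'}S'| = |\partial_{H}S|$ exactly and to confirm that the resulting product $(|S|+1)\,|S^{c}|$ enters with the correct sign so that the factor $1 - \tfrac{1}{m^{2}}$ (rather than something larger) is obtained.
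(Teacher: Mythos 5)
Your proof is correct and follows essentially the same route as the paper: reduce to a single pendant-vertex step, use the sparsest cut of the base graph (with the new vertex placed on the side of its attachment point) as a competitor in Theorem~\ref{main theorem}, bound the resulting ratio by $1-\tfrac{1}{m^2}$, and telescope. The only difference is bookkeeping — you relabel so the attachment point lies in $S$ and maximize $\tfrac{(m+1)|S|}{m(|S|+1)}$ over $|S|\le m-1$, whereas the paper fixes $|S|\le|S^c|$ and splits into two cases; both identify the same worst case and yield the same constant.
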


\begin{proof}
First, we consider the effect of adding a pendant vertex $v$ to $G$.  Let the resultant graph be $G^1$. Let $S$ induces a sparsest cut of $G$ with $\vert S\vert \leq \vert S^c \vert$.\\
\textbf{Case 1:} Let the neighbor of the new vertex $v$ lie in $S$. Consider the set $S^{\prime}=S \cup \{v\}$. Then
\begin{align*}
b(G^1) &\leq \frac{n+1}{2} \frac{\vert \partial S^{\prime}_{G^1}\vert}{\vert S^{\prime}\vert(n+1-\vert S^{\prime}\vert)}\\
&= \frac{n+1}{2} \frac{\vert \partial S_{G}\vert}{(\vert S\vert+1)(n-\vert S\vert)}  .  
\end{align*}
\textbf{Case 2:} Let the neighbor of the pendent vertex lie in $S^c$. Then
\begin{align*}
   b(G^1) &\leq \frac{n+1}{2} \frac{\vert \partial S_{G^1}\vert}{(\vert S \vert )(n+1-\vert S \vert)}\\
    &=\frac{n+1}{2} \frac{\vert \partial S_{G}\vert}{\vert S \vert (n+1-\vert S \vert)} . 
\end{align*}
From the last two cases, we have

$$
b(G^1) \leq \frac{n+1}{2} \max \{ \frac{\vert \partial S_{G} \vert}{\vert S\vert (n+1-\vert S \vert )}, \frac{\vert \partial S_{G} \vert }{(\vert S\vert+1)(n-\vert S \vert )}\}.
$$
Note that if $\vert S \vert =\vert S^c \vert $ then we have $n=2\vert S \vert $, which means $\vert S \vert (n+1-\vert S \vert )=(\vert S \vert +1)(n-\vert S \vert )$.\\
If $\vert S \vert < \vert S^c \vert$ then we have $\vert S \vert(n+1-\vert S \vert)< (\vert S \vert +1)(n-\vert S \vert)$. Together we get

\begin{align*}
b(G^1) & \leq \frac{n+1}{2} \frac{\vert \partial S_{G}\vert}{\vert S \vert (n+1-\vert S \vert)}\\
& =\frac{n+1}{2} \frac{\vert \partial S_{G} \vert }{\vert S \vert (n-\vert S \vert )} \frac{(n-\vert S \vert )}{(n+1-\vert S \vert )}\\
& \leq \frac{n+1}{2} \frac{2b(G)}{n} \frac{n-1}{n}\\
&=b(G) (1-\frac{1}{n^2}).
\end{align*}
The desired result follows by repeating the process $k$ times.
\end{proof}
 
\begin{remark}
Note that the upper bound can be attained; one such example is $S_n$ where the pendant vertex is attached to the center of the star, resulting in creating $S_{n+k}$. It is easy to check that $S_n$ is the only tree for which the inequality turns into equality(See Remark \ref{1}).
\end{remark}

\subsection{Isoperimetric number}\label{iso-connection}
 Next, we establish a bound that connects the isoperimetric number of a graph $G$ with the quantity $b(G)$.
\begin{theorem}\label{b(g)-iso}
    Let $G$ be a graph on $n$ vertices. Then $b(G) \leq \iso(G)$.
\end{theorem}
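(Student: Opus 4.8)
The plan is to exhibit, for the isoperimetric set $S$, a single feasible vector for the $\ell_1$-smoothing problem whose objective value is at most $\iso(G)$; since $b(G)$ is defined as a minimum over all feasible vectors, this immediately yields the bound, with no appeal to the connectivity hypotheses in Theorem \ref{main theorem}.

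First I would fix an isoperimetric set $S$, so that $\iso(G)=\xi(S)=\frac{|\partial S|}{|S|}$ with $0<|S|\leq\lfloor n/2\rfloor$. I then take the candidate vector $x$ defined by $x_v=\frac{1}{2|S|}$ for $v\in S$ and $x_v=-\frac{1}{2|S^c|}$ for $v\in S^c$ --- exactly the $\ell_1$-Fiedler vector associated with $S$ introduced just before Theorem \ref{lap-conn}. A direct check gives $\sum_{v}x_v=\frac{|S|}{2|S|}-\frac{|S^c|}{2|S^c|}=0$ and $\|x\|_1=\frac{|S|}{2|S|}+\frac{|S^c|}{2|S^c|}=1$, so $x$ is feasible for the optimization defining $b(G)$.

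Next I evaluate the objective on $x$. Edges lying inside $S$ or inside $S^c$ contribute $0$, while each edge crossing the cut contributes $\frac{1}{2|S|}+\frac{1}{2|S^c|}=\frac{n}{2|S||S^c|}$. Summing over the $|\partial S|$ crossing edges gives $\sum_{uv\in E(G)}|x_u-x_v|=\frac{n}{2}\rho(S)$, and feasibility of $x$ therefore forces $b(G)\leq\frac{n}{2}\rho(S)$. The point worth emphasizing is that this upper bound holds for an arbitrary cut $S$, with no connectedness requirement on $S$ or $S^c$; this is precisely what allows me to plug in the (possibly disconnected) isoperimetric set.

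Finally I compare $\frac{n}{2}\rho(S)$ with $\xi(S)$. Writing $\rho(S)=\frac{\xi(S)}{|S^c|}$, I get $\frac{n}{2}\rho(S)=\xi(S)\cdot\frac{n}{2|S^c|}$. Since $|S|\leq\lfloor n/2\rfloor$, we have $|S^c|=n-|S|\geq\lceil n/2\rceil\geq\frac{n}{2}$, whence $\frac{n}{2|S^c|}\leq 1$ and thus $\frac{n}{2}\rho(S)\leq\xi(S)=\iso(G)$. Chaining this with the previous paragraph gives $b(G)\leq\iso(G)$. There is no serious obstacle here: the only two observations requiring care are that the explicit feasible vector bounds $b(G)$ from above for any cut (removing the need for connectivity of the isoperimetric set) and the elementary size estimate $|S^c|\geq n/2$.
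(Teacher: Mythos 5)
Your proof is correct and follows essentially the same route as the paper: bound $b(G)$ by $\frac{n}{2}\rho(S)$ for an isoperimetric set $S$, then use $|S^c|\geq n/2$ to conclude $\frac{n}{2}\rho(S)\leq\xi(S)=\iso(G)$. The only difference is that you explicitly justify the first inequality by exhibiting the feasible vector, whereas the paper asserts $b(G)\leq\frac{n}{2}\rho(S)$ directly; your extra care is warranted, since Theorem~\ref{main theorem} minimizes only over cuts with both sides connected and an isoperimetric set need not satisfy that.
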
 
\begin{proof}
    Let $S$ be an isoperimetric set of $G$. Then,
\begin{align*}
   b(G) &\leq \frac{n}{2} \frac{\vert \partial S\vert}{\vert S\vert(n-\vert S\vert)} \\ 
   &= \frac{n}{2} \frac{\iso(G)}{(n-\vert S\vert)} \\
   &\leq \iso(G).
\end{align*}
   
\end{proof}
In \cite{mohar-isoperi-jctb}, Mohar proved the following.
\begin{theorem}[{\cite[Theorem 4.2]{mohar-isoperi-jctb}}] \label{iso-a(G)}
If $G$ is a graph with at least $4$ vertices, then 
$$
\iso(G) \leq \sqrt{a(G)(2 d_{\max}(G) - a(G))},
$$
\end{theorem}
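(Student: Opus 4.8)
The plan is to run the classical discrete Cheeger argument on a Fiedler vector and then sharpen the final estimate so that the constant $2d_{\max}(G)$ is improved to $2d_{\max}(G)-a(G)$. First I would fix an eigenvector $f$ of $L(G)$ for the eigenvalue $a(G)$. Since $f\perp\mathbf{1}$, the sets $V^+=\{v:f_v>0\}$ and $V^-=\{v:f_v<0\}$ are nonempty and disjoint, so at least one of them, say $V^+$ after replacing $f$ by $-f$ if necessary, satisfies $|V^+|\le\lfloor n/2\rfloor$. I then set $g=f^+$, the nonnegative vector obtained by taking the positive part of $f$, supported on $V^+$.

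The first step is the Dirichlet estimate $\sum_{uv\in E}(g_u-g_v)^2\le a(G)\sum_v g_v^2$. This I would obtain by writing $\sum_{uv\in E}(g_u-g_v)(f_u-f_v)=\langle g,L(G)f\rangle=a(G)\langle g,f\rangle=a(G)\sum_v g_v^2$, and then checking edge by edge that $(g_u-g_v)(f_u-f_v)\ge(g_u-g_v)^2$ (this is an equality when $u,v\in V^+$ or when both lie outside $V^+$, and uses $f_uf_v\le 0$ on the crossing edges). Next comes the co-area step: for $t\ge 0$ the superlevel sets $S_t=\{v:g_v^2>t\}$ lie inside $V^+$, so $|S_t|\le\lfloor n/2\rfloor$ and hence $|\partial S_t|\ge\iso(G)\,|S_t|$. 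Using the layer-cake identities $\int_0^\infty|\partial S_t|\,dt=\sum_{uv\in E}|g_u^2-g_v^2|$ and $\int_0^\infty|S_t|\,dt=\sum_v g_v^2$, integration yields $\iso(G)\le\big(\sum_{uv\in E}|g_u^2-g_v^2|\big)/\big(\sum_v g_v^2\big)$.

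I would then factor $|g_u^2-g_v^2|=|g_u-g_v|\,|g_u+g_v|$, apply Cauchy--Schwarz, and use the identity $\sum_{uv\in E}(g_u+g_v)^2=2\sum_v\deg(v)g_v^2-\sum_{uv\in E}(g_u-g_v)^2\le 2d_{\max}(G)\sum_v g_v^2-\sum_{uv\in E}(g_u-g_v)^2$. Writing $\beta=\sum_{uv\in E}(g_u-g_v)^2/\sum_v g_v^2$, the Dirichlet estimate gives $\beta\le a(G)$, and the previous inequalities collapse to $\iso(G)\le\sqrt{\beta\,(2d_{\max}(G)-\beta)}$.

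The hard part is the final comparison: I must conclude $\beta(2d_{\max}(G)-\beta)\le a(G)(2d_{\max}(G)-a(G))$. Since $\phi(t)=t(2d_{\max}(G)-t)$ is a downward parabola peaking at $t=d_{\max}(G)$, the inequality $\phi(\beta)\le\phi(a(G))$ for $\beta\le a(G)$ holds exactly when $a(G)\le d_{\max}(G)$, equivalently $\beta+a(G)\le 2d_{\max}(G)$. For non-complete graphs this is automatic, since Fiedler's bound gives $a(G)\le\kappa(G)\le d_{\min}(G)\le d_{\max}(G)$, where $\kappa(G)$ is the vertex connectivity. The only obstruction is the complete graphs, where $a(K_n)=n>n-1=d_{\max}(K_n)$; for these I would verify the claim directly from $\iso(K_n)=\lceil n/2\rceil$, reducing it to $\lceil n/2\rceil\le\sqrt{n(n-2)}$, which is precisely the inequality that forces the hypothesis $n\ge 4$ (it fails for $K_2$ and $K_3$). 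Combining the non-complete and complete cases gives the stated bound.
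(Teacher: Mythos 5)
The paper does not prove this statement; it is imported verbatim as \cite[Theorem 4.2]{mohar-isoperi-jctb}, so there is no in-paper proof to compare against. Your reconstruction is the standard Cheeger-type argument behind Mohar's original proof, and it checks out: the edge-by-edge inequality $(g_u-g_v)(f_u-f_v)\ge (g_u-g_v)^2$ is correct in all three cases, the co-area identities are right, and the Cauchy--Schwarz step combined with $\sum_{uv\in E}(g_u+g_v)^2=2\sum_v\deg(v)g_v^2-\sum_{uv\in E}(g_u-g_v)^2$ gives $\iso(G)\le\sqrt{\beta(2d_{\max}(G)-\beta)}$ with $\beta\le a(G)$. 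You also correctly isolate the one genuinely delicate point: the monotonicity $\phi(\beta)\le\phi(a(G))$ needs $\beta+a(G)\le 2d_{\max}(G)$, which follows from Fiedler's $a(G)\le\kappa(G)\le d_{\max}(G)$ for non-complete graphs but fails for $K_n$ (where $a=n>n-1=d_{\max}$ and, e.g., $\beta=n-1$ gives $\phi(\beta)=(n-1)^2>n(n-2)=\phi(a)$), and your direct verification $\lceil n/2\rceil\le\sqrt{n(n-2)}$ for $n\ge 4$ correctly locates where the hypothesis on the number of vertices enters. I find no gap.
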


\begin{cor}
Let $G$ be a graph on $n$ vertices. Then, by Theorem \ref{b(g)-iso} and Theorem \ref{iso-a(G)}, we get \begin{equation}\label{b(g)-a(g)-compare}
 b(G) \leq \sqrt{a(G)(2 d_{\max}(G) - a(G))}.    
\end{equation} 
\end{cor}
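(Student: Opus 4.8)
The final statement is a corollary that chains two previously-established inequalities, so the proof is essentially immediate and the corollary's own proof is already embedded in its statement. My plan is simply to make that chaining explicit and verify the two hypotheses line up.

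First I would invoke Theorem~\ref{b(g)-iso}, which gives $b(G) \leq \iso(G)$ for any graph $G$ on $n$ vertices. This holds unconditionally, so no side-condition needs checking here. Next I would apply Theorem~\ref{iso-a(G)} (Mohar's bound), which states $\iso(G) \leq \sqrt{a(G)(2 d_{\max}(G) - a(G))}$ \emph{provided $G$ has at least four vertices}. The only subtlety worth flagging is this hypothesis: Theorem~\ref{iso-a(G)} requires $n \geq 4$, whereas the corollary as stated asks only for a graph on $n$ vertices. So the honest statement of the plan is that I would either add the assumption $n \geq 4$ to the corollary, or verify the remaining small cases ($n \in \{1,2,3\}$) by hand to confirm the inequality still holds there.

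Concatenating the two displayed inequalities then yields
\begin{equation*}
b(G) \leq \iso(G) \leq \sqrt{a(G)\bigl(2 d_{\max}(G) - a(G)\bigr)},
\end{equation*}
which is exactly \eqref{b(g)-a(g)-compare}. The transitivity of $\leq$ closes the argument, with no further calculation needed.

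The only genuine obstacle is the boundary-case discrepancy in the vertex count: for $n < 4$ Mohar's theorem is not available as stated, and one should check those cases directly (for instance, for the relevant small connected graphs one can compare $b(G)$, computed via Theorem~\ref{b_cycle_path_star_compl} or Theorem~\ref{main theorem}, against the right-hand side using the explicit Laplacian spectrum). Aside from this bookkeeping, there is no substantive difficulty, since the corollary is a direct composition of two results already in hand.
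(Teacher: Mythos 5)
Your proof is correct and is exactly the paper's argument: the corollary is obtained by chaining Theorem~\ref{b(g)-iso} with Theorem~\ref{iso-a(G)}. Your observation that Mohar's bound formally requires $n \geq 4$ is a fair point of care that the paper itself does not address, but it does not change the substance of the argument.
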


\begin{remark}
In \cite{enide-geir-2024}, the authors show that \begin{equation}\label{b(g)-a(g)-m-compare}
    b(G) \leq \sqrt{ma(G)}.
    \end{equation}
    Let $G$ be a connected r-regular graph with at least 4 vertices, then 
    $$
a(G)>\frac{r(4-n)}{2}=2r-\frac{nr}{2}=2r-m
    $$
    which shows 
    $$
a(G)(2r-a(G)) < m(a(G))
    $$
    So, the bound in (\ref{b(g)-a(g)-compare}) works better than the bound in (\ref{b(g)-a(g)-m-compare}) for many graphs.
\end{remark}
 The cube graph $Q_n$ consists of vertices represented by binary strings of length $n$,where two vertices are connected by an edge if their strings differ in exactly one bit. Thus $Q_n$ has $2^n$ vertices and it is  $n$-regular. we also have $a(Q_n)=2$\cite{alg-con}. 
As an application of Theorem \ref{b(g)-iso}, we next calculate $b(G)$ explicitly for a couple of graphs. 
\begin{cor}
\begin{enumerate}
    \item[(a)]     Let $Q_n$ be the $n$-dimensional hypercube graph. Then $b(Q_n)=1$.
\item[(b)]    Let $P$ be the Petersen graph. Then $b(P)=1$.

\end{enumerate}
\end{cor}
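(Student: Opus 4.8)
The plan is to sandwich $b(G)$ between the lower bound $\tfrac{a(G)}{2}$ coming from Theorem~\ref{b(g)-min-deg-cutsize}(a) and the upper bound $\iso(G)$ coming from Theorem~\ref{b(g)-iso}, and then exhibit an explicit balanced cut that forces $\iso(G)\le 1$ in each case. Both graphs turn out to satisfy $\tfrac{a(G)}{2}=1$ and admit a set $S$ with $\xi(S)=1$, so the two bounds collapse onto the common value $1$.

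For the lower bound I would invoke the known Laplacian spectra. For the hypercube the excerpt already records $a(Q_n)=2$, so Theorem~\ref{b(g)-min-deg-cutsize}(a) gives $b(Q_n)\ge \tfrac{a(Q_n)}{2}=1$. For the Petersen graph $P$, which is $3$-regular, the adjacency eigenvalues are $3,\,1^{(5)},\,(-2)^{(4)}$, so the Laplacian eigenvalues are $0,\,2^{(5)},\,5^{(4)}$; hence $a(P)=2$ and likewise $b(P)\ge 1$.

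For the upper bound I would produce a cut achieving relative cut-size exactly $1$. In $Q_n$ take $S$ to be the half-cube of all binary strings whose first coordinate is $0$; then $|S|=2^{n-1}=\lfloor 2^n/2\rfloor$, and the only edges leaving $S$ are those flipping the first bit, giving $|\partial S|=2^{n-1}$, so $\xi(S)=\tfrac{|\partial S|}{|S|}=1$ and therefore $\iso(Q_n)\le 1$. In $P$, drawn as an outer $5$-cycle joined by five spokes to an inner pentagram, take $S$ to be the five outer-cycle vertices; then $|S|=5=\lfloor 10/2\rfloor$, the five outer edges lie inside $S$, and $\partial S$ consists precisely of the five spokes, so $|\partial S|=5$ and $\xi(S)=1$, whence $\iso(P)\le 1$. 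Applying Theorem~\ref{b(g)-iso} yields $b(Q_n)\le\iso(Q_n)\le 1$ and $b(P)\le\iso(P)\le 1$.

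Combining the two directions gives the chains $1=\tfrac{a(Q_n)}{2}\le b(Q_n)\le\iso(Q_n)\le 1$ and $1=\tfrac{a(P)}{2}\le b(P)\le\iso(P)\le 1$, forcing equality throughout and establishing $b(Q_n)=b(P)=1$. There is essentially no analytic obstacle here; the only genuine step is recognizing the right balanced cut (the half-cube for $Q_n$, the outer cycle for $P$) so that $|\partial S|=|S|$, and having the eigenvalue facts $a(Q_n)=a(P)=2$ in hand. A minor consistency point worth noting is that the sandwich simultaneously pins down $\iso(Q_n)=\iso(P)=1$, so no separate verification that these isoperimetric numbers are not smaller than $1$ is needed—the lower bound on $b$ supplies it automatically.
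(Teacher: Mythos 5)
Your proposal is correct and follows essentially the same route as the paper: sandwiching $b(G)$ between $\tfrac{a(G)}{2}$ (Theorem~\ref{b(g)-min-deg-cutsize}(a)) and $\iso(G)$ (Theorem~\ref{b(g)-iso}), using $a(Q_n)=a(P)=2$. The only cosmetic difference is that you exhibit the explicit balanced cuts (half-cube, outer $5$-cycle) to get $\iso(G)\le 1$, whereas the paper simply cites $\iso(Q_n)=\iso(P)=1$ from the literature.
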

\begin{proof}
   \textbf{$(a)$} It is known that  $a(Q_n)=2$  \cite{alg-con}. Therefore, by Theorem \ref{b(g)-min-deg-cutsize}, $b(Q_n) \geq 1$. By Theorem \ref{b(g)-iso}, we have $$b(Q_n) \leq \iso (Q_n).$$ Thus, $b(Q_n) \leq 1$ as $\iso(Q_n)=1$ \cite[Example $2.9$]{nica-book-2018}. Thus $b(Q_n)=1$.

   \textbf{$(b)$}   From \cite{old-new-a(G)}, we have $a(P)=2$. Therefore, by Theorem \ref{b(g)-min-deg-cutsize}, $b(P) \geq 1$. By Theorem \ref{b(g)-iso}, we have $b(P) \leq \iso(P)$. Thus $b(P) \leq 1$ as $\iso(P)=1$\cite{nica-book-2018}. Thus $b(P)=1$.
\end{proof}

In both corollaries above, we have $b(G)=\iso(G)$. The following results provide a method for constructing graphs where $b(G)$ coincides with $\iso(G)$.

\begin{lemma}
    
\label{iso(G)=b(g) lemma}
    Let $G$ be a graph on $n$ vertices and $S$ induces a sparsest cut with $ \vert S \vert=\left\lfloor \frac{n}{2} \right\rfloor$. Then $S$ is an isoperimetric set.
\end{lemma}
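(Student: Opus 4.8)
The plan is to compare the relative cut-size $\xi$ with the sparsity $\rho$ directly, and to exploit the fact that the prescribed set $S$ has the largest possible ``small side'' (equivalently, the smallest possible complement) among all admissible isoperimetric candidates. The first step is to record the elementary identity relating the two quantities: for any proper nonempty $T \subseteq V(G)$,
$$
\xi(T) = \frac{\vert \partial T\vert}{\vert T\vert} = \vert T^c\vert \,\frac{\vert\partial T\vert}{\vert T\vert\,\vert T^c\vert} = \vert T^c\vert\,\rho(T).
$$
In particular, using the hypothesis $\vert S\vert = \lfloor n/2\rfloor$, we have $\vert S^c\vert = n - \lfloor n/2\rfloor = \lceil n/2\rceil$, and hence $\xi(S) = \lceil n/2\rceil\,\rho(S)$. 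I would also note at the outset that $S$ is an admissible candidate, since $\vert S\vert = \lfloor n/2\rfloor \le \lfloor n/2\rfloor$ places it in the family of sets over which $\iso(G)$ is minimized.

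The core of the argument is to show $\xi(S) \le \xi(T)$ for every $T$ with $0 < \vert T\vert \le \lfloor n/2\rfloor$. Two facts combine. First, because $S$ induces a sparsest cut, $\rho(S) \le \rho(T)$ for all proper nonempty $T$. Second, the constraint $\vert T\vert \le \lfloor n/2\rfloor$ gives $\vert T^c\vert = n - \vert T\vert \ge n - \lfloor n/2\rfloor = \lceil n/2\rceil$. Since both quantities are positive, multiplying these inequalities yields
$$
\xi(T) = \vert T^c\vert\,\rho(T) \ge \lceil n/2\rceil\,\rho(S) = \xi(S).
$$
As this holds for every admissible $T$, we conclude $\iso(G) = \min_T \xi(T) = \xi(S)$, and since $S$ attains this minimum with $\vert S\vert \le \lfloor n/2\rfloor$, the set $S$ is isoperimetric.

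There is essentially no deep obstacle here; the only point requiring care is the direction in which the two inequalities are multiplied, and this is precisely where the equality hypothesis $\vert S\vert = \lfloor n/2\rfloor$ is essential. That hypothesis forces $\vert S^c\vert$ to be as small as possible, so that the factor $\vert T^c\vert$ attached to any competitor $T$ dominates $\vert S^c\vert$; without it, a competitor on the small side could have a strictly larger complement and the comparison between $\xi(S)$ and $\xi(T)$ would no longer follow from $\rho(S)\le\rho(T)$ alone. I therefore expect the write-up to be short, with the substance concentrated in the single display above.
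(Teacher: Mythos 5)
Your proof is correct and follows essentially the same route as the paper: both arguments combine the sparsest-cut inequality $\rho(S)\le\rho(T)$ with the observation that $\vert T\vert\le\lfloor n/2\rfloor$ forces $n-\vert T\vert\ge n-\vert S\vert$, the paper phrasing this as multiplying through by $n-\vert S\vert$ and bounding the ratio $(n-\vert S\vert)/(n-\vert T\vert)$ by $1$, while you phrase it via the identity $\xi(T)=\vert T^c\vert\rho(T)$. The two write-ups are the same computation rearranged.
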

\begin{proof}
    Let $T \subseteq V(G)$ with $\vert T \vert \leq \left\lfloor \frac{n}{2} \right\rfloor$. Then we have  $$\frac{\vert{\partial S}\vert}{\vert S\vert (n-\vert S \vert)} \leq \frac{\vert {\partial T}\vert}{\vert T\vert(n- \vert T\vert)}.$$ Multiplying both side by $n- \vert S \vert$, we get
    $$
\frac{\vert{\partial S}\vert}{ \vert S \vert } \leq \frac{\vert \partial T \vert}{\vert T \vert} \frac{(n- \vert S \vert)}{(n- \vert T \vert)} \leq \frac{\vert \partial T \vert}{\vert T \vert}.
    $$
     Hence, $S$ must be an isoperimetric set.
\end{proof}
\begin{theorem}\label{iso(G)=b(G)}
    Let $G$ be a graph on $n$ vertices, where $n$ is an even number. If $S$ induces a sparsest cut with $ \vert S \vert= \frac{n}{2} $, then $\iso(G)=b(G)$. 
\end{theorem}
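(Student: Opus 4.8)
The plan is to sandwich $b(G)$ between two copies of a single quantity, namely $\tfrac{n}{2}\rho(S)$, and then read off equality with $\iso(G)$. The upper half is already available: Theorem~\ref{b(g)-iso} gives $b(G)\le\iso(G)$, so the only real work is to identify $\iso(G)$ explicitly in terms of the prescribed set $S$ and to establish a matching lower bound for $b(G)$.

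First I would pin down $\iso(G)$. Since $n$ is even we have $|S|=\tfrac{n}{2}=\lfloor\tfrac{n}{2}\rfloor$, so Lemma~\ref{iso(G)=b(g) lemma} applies and shows that $S$ is an isoperimetric set. Consequently $\iso(G)=\xi(S)=\frac{|\partial S|}{|S|}=\frac{2|\partial S|}{n}$, using $|S|=\tfrac{n}{2}$.

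Next I would produce the lower bound $b(G)\ge\tfrac{n}{2}\rho(S)$. By Theorem~\ref{main theorem}, $b(G)=\tfrac{n}{2}\min_T\rho(T)$ where the minimum runs only over cuts both of whose sides induce connected subgraphs. Since this is a minimum over a subclass of all cuts, it is at least the global minimum of $\rho$, which by hypothesis the sparsest cut $S$ attains; hence $b(G)\ge\tfrac{n}{2}\rho(S)$. Evaluating $\rho(S)$ with $|S|=|S^c|=\tfrac{n}{2}$ gives $\tfrac{n}{2}\rho(S)=\frac{n}{2}\cdot\frac{|\partial S|}{(n/2)^2}=\frac{2|\partial S|}{n}$, which is exactly the value of $\iso(G)$ computed in the previous step.

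Assembling the pieces yields $\frac{2|\partial S|}{n}\le b(G)\le\iso(G)=\frac{2|\partial S|}{n}$, forcing $b(G)=\iso(G)$. I do not anticipate a genuine obstacle, since the statement is essentially a direct combination of Theorem~\ref{b(g)-iso} and Lemma~\ref{iso(G)=b(g) lemma}. The one point that requires care is the lower-bound step: the given $S$ is only assumed to be a \emph{global} sparsest cut, so its two sides may fail to be connected and $S$ need not itself be the minimizer appearing in Theorem~\ref{main theorem}. What rescues the argument is precisely that minimizing $\rho$ over the smaller class of connected-sided cuts can only increase the value, so $\tfrac{n}{2}\min_{\text{connected}}\rho\ge\tfrac{n}{2}\rho(S)$ holds regardless; equality with $\iso(G)$ then retroactively confirms that $S$ does realize the minimum in Theorem~\ref{main theorem}.
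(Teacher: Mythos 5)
Your proof is correct. It shares the paper's key ingredient --- Lemma~\ref{iso(G)=b(g) lemma} to certify that $S$ is an isoperimetric set, so that $\iso(G)=\frac{|\partial S|}{|S|}=\frac{2|\partial S|}{n}$ --- but handles the $b(G)$ side differently. The paper simply writes the chain of equalities $\iso(G)=\frac{|\partial S|}{|S|}=\frac{n}{2}\cdot\frac{|\partial S|}{|S|(n-|S|)}=b(G)$, taking for granted (as it does throughout, e.g.\ in Theorem~\ref{lower bound}) that a global sparsest cut $S$ satisfies $b(G)=\frac{n}{2}\rho(S)$ exactly. You instead sandwich: the upper bound $b(G)\le\iso(G)$ comes from Theorem~\ref{b(g)-iso}, and the lower bound $b(G)\ge\frac{n}{2}\rho(S)$ comes from observing that the minimum in Theorem~\ref{main theorem} runs over a subclass of cuts and therefore dominates the global minimum of $\rho$, which $S$ attains by hypothesis. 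Your version is slightly longer but more careful on exactly the point you flag: Theorem~\ref{main theorem} as stated only minimizes over cuts with both sides connected, so the identity $b(G)=\frac{n}{2}\rho(S)$ for an arbitrary global sparsest cut is not literally immediate from it. The sandwich sidesteps that issue entirely and, as you note, retroactively confirms that $S$ does realize the minimum in Theorem~\ref{main theorem}. Both arguments are valid; the paper's is shorter, yours is more self-contained with respect to the stated results.
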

\begin{proof}
    By Lemma \ref{iso(G)=b(g) lemma}, we have that $S$ is an isoperimetric set. Hence
    $$
\iso(G)=\frac{\vert{\partial S}\vert}{|S|}=\frac{n}{2} \times \frac{\vert\partial S \vert}{\frac{n}{2}\vert S \vert}=\frac{n}{2} \frac{\vert\partial S \vert}{ (n - \vert S \vert)\vert S \vert}=b(G).
    $$
\end{proof}
\begin{remark}
    From Theorem \ref{iso(G)=b(G)}, it is easy to see that if we take two graphs with an equal number of vertices and then join them by exactly one edge, the resultant graph $G$ will satisfy $\iso(G)=b(G)$. 
\end{remark}
\begin{theorem}
Let $G$ be a graph such that $S$ is an isoperimetric singleton set. Then $S$ induces sparsest cut in $G$.
\end{theorem}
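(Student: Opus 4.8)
The plan is to exploit the elementary algebraic relation between the relative cut-size $\xi$ and the sparsity $\rho$, namely
$$
\rho(T) = \frac{\vert\partial T\vert}{\vert T\vert\,\vert T^c\vert} = \frac{1}{n-\vert T\vert}\cdot\frac{\vert\partial T\vert}{\vert T\vert} = \frac{\xi(T)}{\,n-\vert T\vert\,},
$$
valid for every proper nonempty $T\subseteq V(G)$. Writing $S=\{v\}$, the hypothesis that $S$ is an isoperimetric singleton set means $\xi(\{v\})=\deg(v)=\iso(G)$, and hence $\xi(T)\ge\deg(v)$ for every $T$ with $0<\vert T\vert\le\lfloor n/2\rfloor$. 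Note also that $\rho(\{v\})=\deg(v)/(n-1)$.

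First I would show that $\{v\}$ beats every set $T$ of size at most $\lfloor n/2\rfloor$. For such a $T$ the displayed identity gives $\rho(T)=\xi(T)/(n-\vert T\vert)$; combining $\xi(T)\ge\deg(v)$ with the bound $n-\vert T\vert\le n-1$ (which holds because $\vert T\vert\ge 1$) yields $\rho(T)\ge\deg(v)/(n-1)=\rho(\{v\})$. Next I would dispose of the sets $T$ with $\vert T\vert>\lfloor n/2\rfloor$ by the symmetry $\rho(T)=\rho(T^c)$, which is immediate since $\vert\partial T\vert=\vert\partial T^c\vert$ and $\vert T\vert\,\vert T^c\vert=\vert T^c\vert\,\vert T\vert$. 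A short size check shows $\vert T\vert>\lfloor n/2\rfloor$ forces $\vert T^c\vert\le\lfloor n/2\rfloor$ (treating the even and odd cases of $n$ separately), so the previous paragraph applies to $T^c$ and gives $\rho(T)=\rho(T^c)\ge\rho(\{v\})$. Together these two cases establish $\rho(\{v\})\le\rho(T)$ for all proper nonempty $T$, i.e. $S$ minimizes $\rho$ and therefore induces a sparsest cut.

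I do not anticipate any serious obstacle: once the identity $\rho(T)=\xi(T)/(n-\vert T\vert)$ is in place the argument is purely arithmetic, and the only point requiring a moment's care is confirming $\vert T^c\vert\le\lfloor n/2\rfloor$ in the symmetry step. It is worth emphasizing that no connectivity assumption on $S^c$ enters here, since the notion of sparsest cut is defined solely as a minimizer of $\rho$; this is precisely what makes the singleton $\{v\}$ an admissible candidate even when its complement is disconnected.
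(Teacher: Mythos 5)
Your proposal is correct and follows essentially the same route as the paper: both use the identity $\rho(T)=\xi(T)/(n-\vert T\vert)$ together with $\xi(T)\ge\xi(\{v\})$ and $n-\vert T\vert\le n-1$ to conclude $\rho(T)\ge\rho(\{v\})$ for $\vert T\vert\le\lfloor n/2\rfloor$. Your explicit handling of the sets with $\vert T\vert>\lfloor n/2\rfloor$ via the symmetry $\rho(T)=\rho(T^c)$ is a small completeness improvement over the paper's write-up, which leaves that case implicit.
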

\begin{proof}
Let $S=\{v\}$. Then for any subset $T$ of $V(G)$ with $|T| \leq \frac{n}{2}$, we have
$$
\vert \partial S \vert=\frac{\vert\partial S\vert}{\vert S \vert}\leq \frac{\vert \partial T \vert}{\vert T \vert}.
$$
But
$$
\rho(T)=\frac{\vert {\partial T}\vert}{\vert T\vert(n- \vert T\vert)} \geq \frac{\vert{\partial S}\vert}{\vert S \vert (n-\vert T \vert)} \geq \frac{\vert{\partial S}\vert}{|S|(n-\vert S \vert)}=\rho(S).
$$
Hence, $S$ induces sparsest cut.
\end{proof}






We now derive a lower bound on $b(G)$ in terms of the isoperimetric number for regular graphs.

Let $S \subseteq V(G)$, volume of $S$, denoted by  $\vol S$ is defined as follows: 
$$
\vol  S= \sum_{u \in S }\deg(u).
$$
The \textit{Cheegar's constant} of $G$ is defined as follows \cite{chung-book-1997}: $$h(G) =  \min_S \frac{|\partial S|}{\min(\vol S, \vol S^c)}$$

\begin{theorem}[{\cite[Corollary 2.9]{chung-book-1997}}] \label{Cheeger's const bound}
 For an r-regular graph $G$, we have
$$
r\cdot h(G)\geq \inf\limits_x \frac{\sum\limits_{u \sim v} \vert x_u-x_v \vert}{\sum\limits_{u \in V(G)}\vert x_u\vert } \geq \frac{1}{2}r\cdot h(G),
$$ 
where $ x \in \mathbb{R}^n$ satisfying
$$
\sum_{u\in V(G)}x_u=0.
$$
\end{theorem}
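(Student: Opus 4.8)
The plan is to prove the two inequalities separately, after first rewriting $r\cdot h(G)$ in a usable form. Because $G$ is $r$-regular, $\vol S=r\vert S\vert$ for every $S\subseteq V(G)$, so
$$
r\cdot h(G)=\min_{S}\frac{\vert\partial S\vert}{\min(\vert S\vert,\vert S^{c}\vert)}.
$$
Moreover the quotient $\frac{\sum_{u\sim v}\vert x_u-x_v\vert}{\sum_u\vert x_u\vert}$ is invariant under scaling $x\mapsto\lambda x$, so imposing the extra normalization $\Vert x\Vert_1=1$ does not change the infimum over mean-zero $x$; reading $\sum_{u\sim v}$ as a sum over edges, the middle quantity is therefore exactly $b(G)$, and the statement reduces to $\tfrac12\,r\,h(G)\le b(G)\le r\,h(G)$.

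For the upper bound I would exhibit a single test vector. Let $S$ attain the minimum above, with $\vert S\vert\le\vert S^{c}\vert$, and take the mean-zero indicator $x_v=\mathbf{1}_{S}(v)-\tfrac{\vert S\vert}{n}$, which satisfies $\sum_v x_v=0$. A direct computation gives $\sum_{u\sim v}\vert x_u-x_v\vert=\vert\partial S\vert$ (only the cut edges contribute, each with value $1$) and $\sum_u\vert x_u\vert=\tfrac{2\vert S\vert\vert S^{c}\vert}{n}$, so the quotient equals $\frac{n\vert\partial S\vert}{2\vert S\vert\vert S^{c}\vert}$. Since $\vert S^{c}\vert\ge n/2$, this is at most $\frac{\vert\partial S\vert}{\vert S\vert}=r\,h(G)$, giving the upper bound.

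The lower bound is the genuine Cheeger direction and the main obstacle. Given any mean-zero $x$, I would let $m$ be a median of its values and set $y=x-m$, decomposing $y=x^{+}-x^{-}$ into positive and negative parts; both are nonnegative, and by the choice of median each is supported on at most $n/2$ vertices. The engine is the co-area (layer-cake) identity: for nonnegative $g$ with threshold sets $S_t=\{v:g_v\ge t\}$ one has $\sum_u g_u=\int_0^{\infty}\vert S_t\vert\,dt$ and $\sum_{u\sim v}\vert g_u-g_v\vert=\int_0^{\infty}\vert\partial S_t\vert\,dt$. Since every $S_t$ arising from $x^{+}$ or $x^{-}$ has $\vert S_t\vert\le n/2$, the reformulated bound $\vert\partial S_t\vert\ge r\,h(G)\,\vert S_t\vert$ applies inside the integral, yielding $\sum_{u\sim v}\vert g_u-g_v\vert\ge r\,h(G)\sum_u g_u$ for $g=x^{+}$ and $g=x^{-}$. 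A short edge-by-edge case check (splitting on the signs of $y_u,y_v$) shows the exact additivity $\vert x_u-x_v\vert=\vert x^{+}_u-x^{+}_v\vert+\vert x^{-}_u-x^{-}_v\vert$, so these combine to $\sum_{u\sim v}\vert x_u-x_v\vert\ge r\,h(G)\sum_u\vert x_u-m\vert$.

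Finally I would convert $\sum_u\vert x_u-m\vert$ back to $\sum_u\vert x_u\vert$ using only the mean-zero hypothesis: summing $\vert x_u-m\vert\ge\vert x_u\vert-\vert m\vert$ over $u$ gives $\sum_u\vert x_u-m\vert\ge\sum_u\vert x_u\vert-n\vert m\vert$, while $\sum_u\vert x_u-m\vert\ge\bigl\vert\sum_u(x_u-m)\bigr\vert=n\vert m\vert$; adding these two yields $2\sum_u\vert x_u-m\vert\ge\sum_u\vert x_u\vert$. Combining with the previous step gives $\sum_{u\sim v}\vert x_u-x_v\vert\ge\tfrac12\,r\,h(G)\sum_u\vert x_u\vert$, which is the lower bound. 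The two delicate points are using the median to guarantee $\vert S_t\vert\le n/2$ throughout the co-area integral, and verifying the additivity identity on the cut edges; the rest is bookkeeping.
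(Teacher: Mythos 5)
Your proposal is correct. Note, however, that the paper does not prove this statement at all: it is quoted verbatim as Corollary 2.9 of Chung's book, so there is no in-paper argument to compare against. What you have written is essentially the standard proof of the $\ell_1$ (combinatorial) Cheeger inequality that underlies that citation: the scale-invariance observation identifying the middle quantity with $b(G)$, the centered indicator $x=\mathbf{1}_S-\vert S\vert/n$ for the upper bound, and for the lower bound the median shift, the sign decomposition $y=x^{+}-x^{-}$ with the edgewise additivity $\vert y_u-y_v\vert=\vert x^+_u-x^+_v\vert+\vert x^-_u-x^-_v\vert$, the co-area identity applied to each part (where the median guarantees $\vert S_t\vert\le n/2$ so that $\vert\partial S_t\vert\ge r\,h(G)\vert S_t\vert$), and the final conversion $2\sum_u\vert x_u-m\vert\ge\sum_u\vert x_u\vert$ from the mean-zero constraint. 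All of these steps check out (the degenerate case $S_t=\emptyset$ is harmless), so your write-up would serve as a self-contained proof of the cited result.
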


\begin{theorem}
    Let $G$ be a regular graph. Then $$ b(G) \geq \frac {\iso(G)}{2}.$$
\end{theorem}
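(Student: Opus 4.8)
The plan is to combine Theorem~\ref{Cheeger's const bound} with the definition of $b(G)$ and a direct computation of the Cheeger constant $h(G)$ for regular graphs. First I would observe that the central quantity appearing in Theorem~\ref{Cheeger's const bound}, namely
$$
\inf_x \frac{\sum_{uv \in E(G)}|x_u - x_v|}{\sum_{u \in V(G)}|x_u|}
$$
taken over all nonzero $x$ with $\sum_{u} x_u = 0$, is exactly $b(G)$. Indeed, this ratio is invariant under scaling $x \mapsto \lambda x$, so the infimum is attained on the set $\{x : \sum_u x_u = 0,\ \Vert x \Vert_1 = 1\}$, where the denominator equals $1$ and the numerator is precisely the objective function defining $b(G)$. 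Hence the middle term of Theorem~\ref{Cheeger's const bound} equals $b(G)$, and the right-hand inequality there gives
$$
b(G) \geq \tfrac{1}{2}\, r\, h(G),
$$
where $r$ is the common vertex degree.

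Next I would compute $r \cdot h(G)$ in terms of $\iso(G)$. Since $G$ is $r$-regular, $\vol S = r\vert S\vert$ for every $S$, so $\min(\vol S, \vol S^c) = r\min(\vert S\vert, \vert S^c\vert)$, whence
$$
h(G) = \min_S \frac{\vert \partial S\vert}{r\min(\vert S\vert, \vert S^c\vert)} = \frac{1}{r}\,\min_S \frac{\vert \partial S\vert}{\min(\vert S\vert, \vert S^c\vert)}.
$$
Because $\vert \partial S\vert = \vert \partial S^c\vert$, each term in this minimum depends only on the smaller of the two parts, so the minimum over all proper nonempty subsets $S$ coincides with the minimum over $S$ with $\vert S\vert \leq \lfloor n/2 \rfloor$, which is precisely $\iso(G)$. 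Thus $r \cdot h(G) = \iso(G)$, and substituting into the displayed bound yields $b(G) \geq \iso(G)/2$, as required.

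The main obstacle here is the careful matching of conventions rather than any deep argument. In particular, I would make sure that the edge sum $\sum_{u \sim v}$ in Theorem~\ref{Cheeger's const bound} counts each edge once, so that it agrees with $\sum_{uv \in E(G)}$ in the definition of $b(G)$; were Chung's convention to sum over ordered pairs, an extra factor of $2$ would have to be tracked through the estimate. I would also record explicitly the reduction $\min_S \vert \partial S\vert / \min(\vert S\vert, \vert S^c\vert) = \iso(G)$, justified by the symmetry $\vert \partial S\vert = \vert \partial S^c\vert$ under complementation and by the fact that restricting to $\vert S\vert \leq \lfloor n/2 \rfloor$ loses no candidate minimizers.
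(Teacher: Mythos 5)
Your proposal is correct and follows essentially the same route as the paper: both apply the right-hand inequality of Theorem~\ref{Cheeger's const bound} together with the computation $h(G)=\iso(G)/r$ for an $r$-regular graph. The only cosmetic difference is that the paper evaluates the ratio at an $\ell_1$-Fiedler vector (where the denominator is $1$), while you invoke scale-invariance to identify the infimum in Theorem~\ref{Cheeger's const bound} with $b(G)$; these are the same observation.
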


\begin{proof}
    
Let $G$ be an $r$-regular graph. Let $x$ be an $l_1$-Fiedler vector. Then,  
$$
\sum_{v \in V(G)} x_v=0, ~~~~~~\sum _{v \in V(G)} \vert x_v \vert =1, ~~~\mbox{and}~~~ b(G)=\sum_{uv \in E(G)} \vert x_u-x_v \vert.
$$

Now, 
\begin{align*}
  h(G) &=\min_S \frac{\vert \partial S \vert}{\min(\vol\,S, \vol\,S^c)} \\
  &=\frac{1}{r}\min_S \frac{\vert\partial S\vert }{\min(\vert S \vert, \vert S^c \vert)}\\
    &=\frac{1}{r}\min_S \Bigl\{\frac{\vert\partial S\vert}{\vert S\vert} : \vert S \vert \leq \frac{n}{2}  \Bigl\}\\
  &=\frac{\iso(G)}{r}.
\end{align*}\\
Then, by Theorem \ref{Cheeger's const bound},
$$
b(G)=\sum_{uv \in E(G)} \vert x_u-x_v \vert \geq \frac{1}{2}r\cdot h(G) \sum_{v \in V(G)} \vert x_v\vert  =\frac{r\cdot h(G)}{2}=\frac{1}{2}\iso(G).
$$
\end{proof}

 \section*{Acknowledgments}
  M. Rajesh Kannan acknowledges financial support from the ANRF-CRG India and  SRC, IIT Hyderabad. Rahul Roy thanks the University Grants Commission (UGC), India, for financial support through a Junior Research Fellowship.
	\bibliographystyle{amsplain}
	\bibliography{b(G)_ref}

\providecommand{\bysame}{\leavevmode\hbox to3em{\hrulefill}\thinspace}
\providecommand{\MR}{\relax\ifhmode\unskip\space\fi MR }
\providecommand{\MRhref}[2]{%
  \href{http://www.ams.org/mathscinet-getitem?mr=#1}{#2}
}
\providecommand{\href}[2]{#2}
\begin{thebibliography}{10}

\bibitem{enide-geir-2024}
Enide Andrade and Geir Dahl, \emph{Combinatorial {F}iedler theory and graph
  partition}, Linear Algebra Appl. \textbf{687} (2024), 229--251. \MR{4708740}

\bibitem{ipm-brouwer-haemers}
Andries~E. Brouwer and Willem~H. Haemers, \emph{Spectra of graphs},
  Universitext, Springer, New York, 2012. \MR{2882891}

\bibitem{chung-book-1997}
Fan R.~K. Chung, \emph{Spectral graph theory}, CBMS Regional Conference Series
  in Mathematics, vol.~92, Conference Board of the Mathematical Sciences,
  Washington, DC; by the American Mathematical Society, Providence, RI, 1997.
  \MR{1421568}

\bibitem{old-new-a(G)}
Nair Maria~Maia de~Abreu, \emph{Old and new results on algebraic connectivity
  of graphs}, Linear Algebra Appl. \textbf{423} (2007), no.~1, 53--73.
  \MR{2312323}

\bibitem{MR4281912}
Mostafa Einollahzadeh and Mohammad~Mahdi Karkhaneei, \emph{On the lower bound
  of the sum of the algebraic connectivity of a graph and its complement}, J.
  Combin. Theory Ser. B \textbf{151} (2021), 235--249. \MR{4281912}

\bibitem{alg-con}
Miroslav Fiedler, \emph{Algebraic connectivity of graphs}, Czechoslovak Math.
  J. \textbf{23(98)} (1973), 298--305. \MR{318007}

\bibitem{fiedler-1989}
\bysame, \emph{Laplacian of graphs and algebraic connectivity}, Combinatorics
  and graph theory ({W}arsaw, 1987), Banach Center Publ., vol.~25, PWN, Warsaw,
  1989, pp.~57--70. \MR{1097636}

\bibitem{matrix-analysis}
Roger~A. Horn and Charles~R. Johnson, \emph{Matrix analysis}, second ed.,
  Cambridge University Press, Cambridge, 2013. \MR{2978290}

\bibitem{mohar-isoperi-jctb}
Bojan Mohar, \emph{Isoperimetric numbers of graphs}, J. Combin. Theory Ser. B
  \textbf{47} (1989), no.~3, 274--291. \MR{1026065}

\bibitem{mohar-lap-sur-1}
\bysame, \emph{The {L}aplacian spectrum of graphs}, Graph theory,
  combinatorics, and applications. {V}ol.\ 2 ({K}alamazoo, {MI}, 1988),
  Wiley-Intersci. Publ., Wiley, New York, 1991, pp.~871--898. \MR{1170831}

\bibitem{mohar-eigen-comb-opti}
Bojan Mohar and Svatopluk Poljak, \emph{Eigenvalues in combinatorial
  optimization}, Combinatorial and graph-theoretical problems in linear algebra
  ({M}inneapolis, {MN}, 1991), IMA Vol. Math. Appl., vol.~50, Springer, New
  York, 1993, pp.~107--151. \MR{1240959}

\bibitem{nica-book-2018}
Bogdan Nica, \emph{A brief introduction to spectral graph theory}, EMS
  Textbooks in Mathematics, European Mathematical Society (EMS), Z\"urich,
  2018. \MR{3821579}

\bibitem{trevisan_spec-partition}
Luca Trevisan, \emph{Lecture notes on graph partitioning expanders and spectral
  methods}, University of California Lecture Notes, Berkeley (2017).

\bibitem{ulrike-spectral}
Ulrike von Luxburg, \emph{A tutorial on spectral clustering}, Stat. Comput.
  \textbf{17} (2007), no.~4, 395--416. \MR{2409803}

\end{thebibliography}
\end{document}